\documentclass[10pt]{article}

\textwidth=140truemm \textheight=200truemm \oddsidemargin=1truecm
\evensidemargin=1truecm

\usepackage{amsmath,amsfonts,amssymb,amsthm,graphics,amscd}
\usepackage{latexsym,mathrsfs}

\usepackage{color}

\newtheorem{theorem}{Theorem}[section]
\newtheorem{lemma}{Lemma}[section]
\newtheorem{corollary}{Corollary}[section]

\newtheorem{example}[theorem]{Example}
\theoremstyle{definition}
\newtheorem{definition}{Definition}[section]

\theoremstyle{remark}

\def\ds{\displaystyle}

\def\p{\prime}
\def\inter{{\rm int}}

\def\NN{\mathbb{N}}
\def\CC{\mathbb{C}}

\def\DD{\mathbb{D}}
\def\DDD{\bf{D}}

\numberwithin{equation}{section}

\def\acc{{\rm acc}}
\def\iso{{\rm iso}}
\def\dim{{\rm dim}}

\def\ind{{\rm ind}}
\def\snoi{\smallskip\noindent}

\def\codim{{\rm codim}\, }
\def\NN{{\mathbb N}}

\def\CC{{\mathbb C}}

\def\DD{\mathbb{D}}
\def\W{\mathcal{W}}
\def\B{\mathcal{B}}
\def\M{\mathcal{M}}

\def\Q{\mathcal{Q}}
\def\D{\mathcal{D}}
\def\S{\mathcal{S}}
\def\J{\mathcal{J}}
\def\c{\'c}

\begin{document}

\date{}

\title{Generalized Drazin-meromorphic invertible operators and generalized Kato-meromorphic decomposition}
\author{S. \v C. \v Zivkovi\'c-Zlatanovi\'c, B. P. Duggal\footnote{The authors are
supported by the Ministry of Education, Science and Technological
Development, Republic of Serbia, grant no. 174007.}}

\date{}
\maketitle

\setcounter{page}{1}

\begin{abstract}
A bounded linear operator $T$ on a
Banach space $X$
  is said to be    generalized Drazin-meromorphic
invertible if there exists a bounded linear operator $S $ acting on $X$ such that
$TS=ST$, $STS=S$, $ TST-T$ is meromorphic. We shall say that  $T$  admits a generalized  Kato-meromorphic
decomposition if there exists a pair of $T$-invariant closed subspaces $(M,N)$ such that $X=M\oplus N$, the reduction  $T_M$ is Kato and the reduction $T_N$ is meromorphic.
In this paper we shall investigate such kind of operators and corresponding spectra, the generalized Drazin-meromorphic spectrum and the generalized Kato-meromorphic spectrum, and prove that these spectra are empty if and only if the operator $T$ is polynomially meromorphic. Also we obtain that  the generalized Kato-meromorphic spectrum differs from the Kato type spectrum                              on at most countably many points.
  Among others, bounded linear operators  which can be expressed as a direct sum of a meromorphic operator and a bounded below (resp.  surjective, upper (lower) semi-Fredholm, Fredholm,  upper (lower) semi-Weyl, Weyl) operator are studied. In particular, we shall characterize the single-valued extension property at a point $\lambda_0\in\CC$ in the case that $\lambda_0-T$ admits a generalized  Kato-meromorphic
decomposition. As a consequence we get several results on cluster points of some distinguished parts of the spectrum.

\end{abstract}

2010 {\it Mathematics subject classification\/}:  47A53, 47A10.

{\it Key words and phrases\/}:  Banach space; Kato operators; meromorphic operators; polynomially meromorphic operators;
single valued extension property;
  semi-Fredholm spectra; semi-B-Fredholm spectra.

\section{Introduction  and preliminaries}

Throughout this paper $\mathbb{N} \, (\mathbb{N}_0)$ denotes the set of all positive
(non-negative) integers, $\mathbb{C}$ denotes the set of all
complex numbers and $L(X)$  denotes  the Banach algebra of bounded linear operators acting on  an infinite dimensional complex  Banach space $X$.
  The group of all invertible operators is
denoted by $L(X)^{-1}$, while  set of all
bounded below (surjective) operators is denoted by  $\J(X)$ ($\S(X)$).
For $T\in L(X)$ we shall denote by $\sigma(T)$, $\sigma_{ap}(T)$ and $\sigma_{su}(T)$ its  spectrum, approximate point spectrum and surjective spectrum, respectively.  Also, we shall write $ \alpha (T)$ for the dimension of the kernel   $N(T)$ and   $\beta (T)$ for the codimension of the range $R(T)$.
We call  $T\in L(X)$ an {\it upper semi-Fredholm} operator if $ \alpha (T)<\infty\text{
 and
}R(T)\text{ is closed}$,
but if $\beta (T)<\infty$, then $T$ is
a {\it lower semi-Fredholm} operator. We use   $\Phi_{+}(X)$ (resp.
$\Phi_{-}(X)$)  to  denote the set of upper (resp. lower)
semi-Fredholm operators.  An operator  $T\in L(X)$ is said to be  a {\it semi-Fredholm} operator if $T$ is upper or lower semi-Fredholm. 
              If $T$ is semi-Fredholm, the index of $T$, $\ind (T)$, is defined  to be
           $\ind (T)=\alpha (T)-\beta (T)$. The set of Fredholm operators is defined as
           $
\Phi (X)=  \Phi_+(X) \cap \Phi_-(X).$
The sets  of upper semi-Weyl, lower semi-Weyl and  Weyl operators are
defined as $\W_+(X)=\{ T\in\Phi_+ (X): \ind (T)\le 0\}$, $\W_-(X)=\{ T\in\Phi_- (X): \ind (T)\ge 0\}$  and  $\W(X)=\{ T\in\Phi (X): \ind (T)=0\}$, respectively.
An operator $T \in L(X)$ is said to be {\em Riesz operator}, if $T-\lambda
\in \Phi(X)$ for every non-zero $\lambda \in \mathbb{C}$.
An operator $T\in L(X)$ is  {\it meromorphic} if its non-zero spectral points are poles of its resolvent, and in that case we shall write $T\in(\M)$. It is well known that $T$ is Riesz operator if and only if
 every nonzero point of  $\sigma (T)$ is a pole of the finite
algebraic multiplicity. So, every Riesz operator is meromorphic.
 We say that $T$ is {\it polinomially meromorphic} if there exists non-trivial polynomial $p$ such that $p(T)$ is meromorphic.

For $n\in\NN_0$ we set $c_n(T)=\dim R(T^n)/R(T^{n+1})$ and $c_n^\prime(T)=\dim N(T^{n+1})/N(T^n)$. From \cite[Lemmas 3.1 and 3.2]{Kaashoek} it follows that $c_n(T)=\codim (R(T)+N(T^n))$ and $c_n^\prime(T)=\dim (N(T)\cap R(T^n))$. Obviously, the sequences $(c_n(T))_n$ and $(c_n^\prime(T))_n$ are decreasing. For each $n\in\NN_0$, $T$ induces a linear transformation from the vector space $R(T^n)/R(T^{n+1})$ to the space $R(T^{n+1})/R(T^{n+2})$: let $k_n(T)$ denote the dimension of the null space of the induced map. From \cite[Lemma 2.3]{Grabiner}   it follows that
\[k_n(T)=\dim (N(T)\cap R(T^n))/( N(T)\cap R(T^{n+1}))=\dim (R(T)+N(T^{n+1}))/(R(T)+N(T^n)).\] From this it is easily
seen that
$
k_n(T)=c_n^{\prime}(T)-c_{n+1}^{\prime}(T) $ if $c_{n+1}^{\prime}(T)<\infty$ and $ k_n(T)=c_n(T)-c_{n+1}(T)$ if $c_{n+1}(T)<\infty$.

The  {\it descent} $\delta(T)$ and the {\it ascent} $ a(T) $ of $T$ are defined by
 $ \delta(T)=\inf \{ n\in\NN_0:c_{n}(T)=0 \}=\inf  \{n\in\NN_0: R(T^n) = R(T^{n+1})\}$
and
 $ a(T)=\inf \{ n\in\NN_0:c^\prime_{n}(T)=0 \}= \inf \{n\in\NN_0 : N(T^{n})=N(T^{n+1})\}$. We set formally $\inf\emptyset =\infty$.

The {\it essential  descent} $\delta_e(T)$ and the {\it essential ascent} $ a_e(T) $ of $T$ are defined by
 $ \delta_e(T)=\inf \{ n\in\NN_0:c_{n}(T)<\infty \}$
and
 $ a_e(T)=\inf \{ n\in\NN_0:c^\prime_{n}(T)<\infty \}$.

The sets  of upper semi-Browder, lower semi-Browder and  Browder operators are
defined as $\B_+(X)=\{ T\in\Phi_+ (X): a (T)<\infty\}$, $\B_-(X)=\{ T\in\Phi_- (X): \delta(T)<\infty\}$  and  $\B(X)=\B_+(X)\cap \B_-(X)$, respectively.

Sets of {\it left and
right Drazin invertible} operators, respectively, are defined as
 $
LD(X) = \{T\in L(X) : a(T) < \infty {\rm \  and\ } R(T^{a(T)+1})\ {\rm  is\ closed }\}
$
 and
         $
RD(X) = \{T\in L(X) : \delta(T) < \infty {\rm \  and\ } R(T^{\delta(T)})\ {\rm  is\ closed }\}.
$
 If $   a(T) < \infty $ and $\delta(T)<\infty$, then $T$ is called  {\it Drazin invertible}. By $D(X)$ we denote the set of Drazin invertible operators.

An operator $T\in L(X)$ is a {\it left essentially  Drazin invertible} operator if
 $
 a_e(T) < \infty$ and $ R(T^{a_e(T)+1})$  is closed.
 If
        $\delta_e(T) < \infty$  and $ R(T^{\delta_e(T)})$  is closed, then $T$ is called {\it right essentially  Drazin invertible}.
In the sequel   $LD^e(X)$ (resp.
$RD^e(X)$)  will denote the set of left (resp. right) essentially  Drazin invertible
 operators.
For a bounded linear operator T and $n\in\NN_0$ define $T_n$ to be the
restriction of $T$ to $R(T^n)$ viewed as a map from $R(T^n)$ into $R(T^n)$ (in particular,
$T_0 = T$). If $T \in L(X) $ and if there exists an integer  $n$
 for which the range space $R(T^n)$ is closed and $T_{n}$  is   Fredholm (resp. upper semi-Fredholm, lower semi-Fredholm, Browder, upper semi-Browder, lower semi-Browder), then $T$ is called a {\it B-Fredholm}  (resp. {\it upper
semi-B-Fredholm, lower semi-B-Fredholm}, {\it B-Browder}, {\it upper
semi-B-Browder, lower semi-B-Browder} ) operator. If $T \in L(X)$ is upper or lower semi-B-Fredholm, then T is called
{\it semi-B-Fredholm}.
The index $\ind(T)$ of a semi-B-Fredholm
operator T is defined as the index of the semi-Fredholm operator $T_n$. By
\cite[Proposition 2.1]{Ber} the definition of the index is independent of the integer n.
An operator $T \in L(X)$
is {\it  B-Weyl} (resp. {\it  upper semi-B-Weyl, lower semi-B-Weyl}) if $T$ is   B-Fredholm
and ind(T) = 0 (resp. $T$ is upper semi-B-Fredholm and $\ind(T) \le 0$, $T$ is lower semi-B-
Fredholm and $\ind(T) \ge 0$).

We use the following notation:
\begin{center}
\begin{tabular}{|c|c|c|} \hline
${\bf R_1}: \J(X)$ & ${\bf R_2}: \S(X)$ & ${\bf R_3}: L(X)^{-1}$ \\
\hline
${\bf R_4}=\Phi_+(X)$ & ${\bf R_5}=\Phi_-(X)$ & ${\bf R_6}=\Phi(X)$ \\
\hline
${\bf R_{7}}=\mathcal{W}_+(X)$ & ${\bf R_{8}}=\mathcal{W}_-(X)$ & ${\bf R_{9}}=\mathcal{W}(X)$ \\
\hline
\end{tabular}
\end{center}
We recall that \cite[Theorem 3.6]{Ber0}
 \begin{eqnarray}
   &&{\bf BR_1}(X)={\bf B\B_+}(X)=LD(X),\quad {\bf BR_2}(X)={\bf B\B_-}(X)=RD(X),\nonumber\\
     &&{\bf BR_3}(X)={\bf B\B}(X)=D(X)\label{poziv} \\
   &&{\bf B}\Phi_+(X)=LD^e(X),\quad  {\bf B}\Phi_-(X)=RD^e(X).\nonumber
   \end{eqnarray}

 For $T\in L(X)$, the {\it upper semi-B-Fredholm spectrum}, the {\it  lower semi-B-Fredholm spectrum}, the {\it  B-Fredholm spectrum}, the {\it  upper semi-B-Weyl spectrum}, the {\it  lower semi-B-Weyl spectrum}, the {\it  B-Weyl spectrum}, the {\it  upper semi-B-Browder spectrum}, the {\it  lower semi-B-Browder spectrum} and  the {\it  B-Browder spectrum} are defined, respectively, by:
 \begin{eqnarray*}
      \sigma_{B\Phi_+}(T) &=& \{\lambda\in\CC:T-\lambda I\ \mbox{ is\ not\ upper\ semi-B-Fredholm}\}, \\
    \sigma_{B\Phi_-}(T) &=& \{\lambda\in\CC:T-\lambda I\ \mbox{ is\ not\ lower\ semi-B-Fredholm}\},\\
    \sigma_{B\Phi}(T) &=& \{\lambda\in\CC:T-\lambda I\ \mbox{ is\ not\ B-Fredholm}\},\\
     \sigma_{B\W_+}(T) &=& \{\lambda\in\CC:T-\lambda I\ \mbox{ is\ not\ upper\ semi-B-Weyl}\}, \\
     \sigma_{B\W_-}(T) &=& \{\lambda\in\CC:T-\lambda I\ \mbox{ is\ not\ lower\ semi-B-Weyl}\}, \\
     \sigma_{B\W}(T) &=& \{\lambda\in\CC:T-\lambda I\ \mbox{ is\ not\ B-Weyl}\},\\
     \sigma_{B\B_+}(T) &=& \{\lambda\in\CC:T-\lambda I\ \mbox{ is\ not\ upper\ semi-B-Browder}\}, \\
     \sigma_{B\B_-}(T) &=& \{\lambda\in\CC:T-\lambda I\ \mbox{ is\ not\ lower\ semi-B-Browder}\}, \\
     \sigma_{B\B}(T) &=& \{\lambda\in\CC:T-\lambda I\ \mbox{ is\ not\ B-Browder}\}.
 \end{eqnarray*}


For $T \in L(X)$ and every $d\in\NN_0$, the operator range topology on $R(T^d)$ is defined by the norm $\|\cdot\|_d$  such that for every $y\in R(T^d)$,
$$
\|y\|_d=\inf\{\|x\|:x\in X,\ y=T^dx\}.
$$

Operators which have eventual topological uniform descent were introduced by Grabiner in \cite{Grabiner}:
\begin{definition} \rm
 Let $T \in L(X)$. If there is $d\in\NN_0$ for which $k_n(T)=0$ for $n\ge d$, then $T$ is said to have uniform descent for $n\ge d$. If in addition, $R(T^n)$ is closed in the operator range topology of $R(T^d)$ for $n\ge d$, then we say that $T$ has {\it eventual topological uniform descent} and, more precisely, that $T$ has {\it  topological uniform descent ({\rm TUD for brevity})  for} $n\ge d$.
\end{definition}

For $T\in L(X)$,    the {\it topological uniform descent spectrum} is defined by:
\begin{eqnarray*}
\sigma_{TUD}(T)=\{\lambda \in \CC: T-\lambda \ \text{ does\ not\  have\ TUD}\}.
\end{eqnarray*}

If $M$ is a subspace of $X$ such that $T(M) \subset M$, $T \in
L(X)$, it is said that $M$ is {\em $T$-invariant}. We define $T_M:M
\to M$ as $T_Mx=Tx, \, x \in M$.  If $M$ and $N$ are two closed
$T$-invariant subspaces of $X$ such that $X=M \oplus N$, we say that
$T$ is {\em completely reduced} by the pair $(M,N)$ and it is
denoted by $(M,N) \in Red(T)$. In this case we write $T=T_M \oplus
T_N$ and say that $T$ is the {\em direct sum} of $T_M$ and $T_N$.

For $T \in
L(X)$ we say that it is {\em Kato} if $R(T)$ is closed and $N(T) \subset
R(T^n)$ for every $ n \in \mathbb{N}$. 
It is said that $T \in L(X)$ admits a Kato decomposition or $T$ is
of Kato type if there exist two closed $T$-invariant subspaces $M$
and $N$ such that $X=M \oplus N$, $T_M$ is Kato and $T_N$ is
nilpotent.
If we require that $T_N$ is quasinilpotent instead of nilpotent in
the definition of the Kato decomposition, then it leads us to the
generalized Kato decomposition, abbreviated as GKD \cite{pfred}. 
 An operator $T \in L(X)$ is said to  admit a {\em generalized Kato-Riesz
decomposition} if there exists a pair $(M,N)
\in Red(T)$ such that $T_M$ is Kato and $T_N$ is Riesz.

For $T\in L(X)$,    the {\it Kato type  spectrum}, the {\it  generalized Kato spectrum}
 and the {\it  generalized Kato-Riesz spectrum} are defined, respectively, by:
\begin{eqnarray*}\sigma_{Kt}(T)&=&\{\lambda \in
\CC: T-\lambda\ \text{is\ not\ of\ Kato\ type}\},\\
\sigma_{gK}(T)&=&\{\lambda \in \CC: T-\lambda \ \text{does\ not\ admit\ a\ generalized\
Kato\ decomposition}\}\\
\sigma_{gKR}(T)&=&\{\lambda\in\CC: T-\lambda\ \text{does\ not\ admit\  a\ generalized\ Kato-Riesz\
decomposition}\}
.
\end{eqnarray*}

If $K \subset \mathbb{C}$, then $\partial K$ is the
boundary of $K$, $\acc \, K$ is the set of accumulation points of
$K$, $\iso \, K=K \setminus \acc \, K$ and $\inter\, K$ is the set of interior points of $K$.
For $\lambda_0\in \CC$, the open disc, centered  at $\lambda_0$  with radius $\epsilon$ in $\CC$, is denoted  by $D(\lambda_0,\epsilon)$.

If $T\in L(X)$ the {\it reduced minimum modulus} of a non-zero operators $T$ is defined to be
$$
\gamma(T)=\inf\limits_{x\notin N(T)}\ds\frac {\|Tx\|}{dist(x, N(T))}.
$$
If $T=0$, then we take $\gamma(T)=\infty$.

\medskip

Recall that an operator $T\in L(X)$ is Drazin invertible if there is $S\in L(X)$ such that
\begin{equation*}
  TS=ST,\ STS=S, \ TST-T\ {\rm is\ nilpotent}.
\end{equation*}
The concept of the generalized Drazin invertible operators was
introduced by J. Koliha \cite{koliha}: an operator $T\in L(X)$  is generalized Drazin
invertible  in case there is $S\in L(X)$ such that
\begin{equation*}
  TS=ST,\ STS=S, \ TST-T\ {\rm is\ quasinilpotent}.
\end{equation*}
 Recall that $T$ is generalized Drazin invertible if and only if $0\notin\acc\, \sigma (T)$, and this is also equivalent to the fact that
 $T=T_1\oplus T_2$ where $T_1$ is invertible and $T_2$ is quasinilpotent. In \cite{SZC} this concept is further generalized by   replacing the third condition in the previous definitions  by the condition that $TST-T$ is Riesz,  and so it is introduced the concept of generalized Drazin-Riesz invertible operators. It is proved that $T$ is generalized Drazin-Riesz invertible if and only if $T$ admits a generalized Kato-Riesz
decomposition and $0$ is not an interior point of $\sigma (T)$, and this is also equivalent to the fact that $T=T_1\oplus T_2$, where $T_1$ is invertible and $T_2$ is Riesz.

 In this paper we further generalize  this concept by introducing  generalized Drazin-meromorphic
invertible operators.
  \begin{definition}
An operator $T\in L(X)$ is  {\em generalized Drazin-meromorphic
invertible}, if there exists $S \in L(X)$ such that
\[TS=ST, \; \; \; STS=S, \; \; \; TST-T \; \; \text{is meromorphic}.\]
\end{definition}
Obviously, every meromorphic operator is generalized Drazin-meromorphic
invertible.
\begin{definition}
 An operator $T \in L(X)$ is said to  admit a {\em generalized Kato-meromorphic
decomposition}, abbreviated to $GK(\M)D$, if there exists a pair $(M,N)
\in Red(T)$ such that $T_M$ is Kato and $T_N$ is meromorphic (i.e. $T_N\in (\M)$). In that case we shall say that $T$ admits a $GK(\M)D(M,N)$.
\end{definition}
\begin{definition} An operator $T\in L(X)$  satisfies $T \in {\bf GD(\M)R_i}$ if there exist $(M,N)\in Red(T)$ such that $T_M\in{\bf R_i}$ and $T_N\in (\M)$, $1\le i\le 9$.
\end{definition}

In the second section of this paper,  using   Grabiner's punctured neighborhood theorem \cite[Theorem 4.7]{Grabiner}, we characterize operators which belong to the set ${\bf GD(\M)R_i}$,  $1\le i\le 9$. In particular, we characterize  generalized Drazin-meromorphic invertible operators and among other results, we prove that $T$ is generalized Drazin-meromorphic
invertible if and only if $T=T_1\oplus T_2$ where $T_1$ is invertible and $T_2$ is meromorphic,  and this is also equivalent to the fact that
$T$ admits a generalized Kato-meromorphic
decomposition and $0$ is not an interior point of $\sigma (T)$. Also, if $T$ admits a generalized Kato-meromorphic
decomposition, then $0$ is not an interior point of $\sigma (T)$ if and only if $0$ is not an acumulation point of $\sigma_{\bf BB}(T)$.

An operator $T\in L(X)$ is said to have the single-valued extension property at $\lambda_0\in\CC$ (SVEP at $\lambda_0$ for breviety) if for every open disc $\D_{\lambda_0}$ centerd at $\lambda_0$ the only analitic function $f:\D_{\lambda_0}\to X$ satisfying $(T-\lambda)f(\lambda)=0 $ for all $\lambda\in \D_{\lambda_0}$ is the function $f\equiv 0$.
An operator $T\in L(X)$ is said to have the SVEP if $T$ has the SVEP at every point $\lambda\in\CC$.
There are implications (see \cite{aienarosas}, p. 182):
\begin{eqnarray}
  &&\sigma(T)\ {\rm does\ not\ cluster\ at\ }\lambda_0\Longrightarrow T\ {\rm and\ }T^\prime\ {\rm have\ the\ SVEP\ at\ }\lambda_0,\label{w1}\\
  &&\sigma_{ap}(T)\ {\rm does\ not\ cluster\ at\ }\lambda_0\Longrightarrow T\ {\rm has\ the\ SVEP\ at\ }\lambda_0,\label{w2}\\
  &&\sigma_{su}(T)\ {\rm does\ not\ cluster\ at\ }\lambda_0\Longrightarrow T^\prime\ {\rm has\ the\ SVEP\ at\ }\lambda_0.\label{w3}
\end{eqnarray}

P. Aiena and E. Rosas gave characterizations of the SVEP at $\lambda_0$ in the case that $\lambda_0-T$ is of Kato type. Precisely, they proved that if $\lambda_0-T$ is of Kato type, then the implications \eqref{w1}, \eqref{w2} and \eqref{w3} can be reversed \cite{aienarosas}. Q. Jiang and H. Zhong \cite{kinezi} gave further characterizations of the SVEP at $\lambda_0$ in the case that $\lambda_0-T$ admits a generalized Kato decomposition. They  proved that if $\lambda_0-T$ admits a GKD,  then the following statements are equivalent:

(i) $T$\ ($T^\prime$) has the SVEP at $\lambda_0$;

(ii) $\sigma_{ap}(T)$ ($\sigma_{su}(T)$) does not cluster at $\lambda_0$;

(iii) $\lambda_0$ is not an interior point of $\sigma_{ap}(T)$ ($\sigma_{su}(T)$),

\noindent that is, the implications \eqref{w1}, \eqref{w2} and \eqref{w3}   can be also reversed in the case that $\lambda_0-T$ admits a GKD.
In \cite{SZC} it was showed that  if $\lambda_0-T$ admits a generalized Kato-Riesz
decomposition,  then the following statements are equivalent:

(i) $T\ (T^\prime)\ {\rm has\ the\ SVEP\ at\ }\lambda_0$;

(ii) $\sigma_{\B_+}(T)\ (\sigma_{\B_-}(T))$ does not cluster at $\lambda_0$;

(iii) $\lambda_0$ is not an interior point of $\sigma_{ap}(T)\ (\sigma_{su}(T))$.

In the second section we give further characterizations of the SVEP at $\lambda_0$ always in the case that $\lambda_0-T$ admit a generalized Kato-meromorphic
decomposition. Precisely,  we prove that if $\lambda_0-T$ admits a generalized Kato-meromorphic
decomposition, then $T$ has the SVEP at $\lambda_0$ if and only if $\sigma_{\bf BB_+}(T)$ does not cluster at $\lambda_0$, and it is precisely when $\lambda_0$ is not an interior point of $\sigma_{ap} (T)$. A dual result shows  that, always if $\lambda_0-T$ admits a generalized Kato-meromorphic
decomposition, $T^\prime$ has the SVEP at $\lambda_0$ if and only if $\sigma_{\bf BB_-}(T)$ does not cluster at $\lambda_0$, and it is precisely when $\lambda_0$ is not an interior point of $\sigma_{su} (T)$.


Also,  we prove that if
$\lambda_0-T$ admits a $GK(\M)D$,  then  $\lambda_0$ is not an interior point of $\sigma_{{\bf R}}(T)$
if and only if
 $\sigma_{{\bf BR}}(T)$ does not cluster at $\lambda_0$ where ${\bf R}$ is one of $\Phi_+, \Phi_-, \Phi, \W_+, \W_-, \W$.

In the third section    we investigate  corresponding spectra.
 For $T\in L(X)$, the generalized Drazin-meromorphic spectrum and the generalized Kato-meromorphic spectrum
are
defined, respectively, by
\begin{eqnarray*}
  \sigma_{\bf gD\M}(T) &=& \{\lambda\in\CC: T-\lambda\ \mbox{is\ not\
generalized\ Drazin-meromorphic\ invertible}\} \\
  \sigma_{\bf gK \M}(T) &=&  \{\lambda \in \mathbb{C}:T-\lambda   \mbox{ does\ not\ admit\ generalized\ Kato-meromorphic\ decomposition}\}.
\end{eqnarray*}
It is proved that these spectra are compact and that  the generalized Kato-meromorphic spectrum differs from the Kato type spectrum                              on at most countably many points. We deduce several results  on cluster points of some semi-B-Fredholm spectra. Among other results, it is proved that $ \partial \, \sigma_{{\bf  R}_i}(T)\cap \acc\, \sigma_{{\bf  BR}_i}(T)\subset \partial\, \sigma_{\bf g K\M}(T)$, $1\le i\le 9$. Also we get some results regarding boundaries and connected  hulls of corresponding spectra (${\bf GD(\M)R_i}$ spectra), and
obtain that  the
generalized Drazin-meromorphic spectrum and  the generalized Kato-meromorphic spectrum of $T\in L(X)$
 are empty in the same time and that this happens
  if and only if $T$ is polynomially meromorphic.

  These results are  applied  to some concrete operators, amongst them the unilateral weighted right shift operator on $\ell_p(\NN)$, $1\le p<\infty$, the forward and backward unilateral shifts on $  c_0(\NN), c(\NN), \ell_{\infty}(\NN)$ or $\ell_p(\NN)$, $1\le p<\infty$,   arbitrary non-invertible isometry, and  $\rm Ces\acute{a}ro$ operator.

 \section{Generalized Drazin-meromorphic invertible and\\generalized  Drazin-meromorphic semi-Fredholm\\ operators}
We start with the following auxiliary assertions.
\begin{lemma}\label{Kato-SVEP}
Let $T\in L(X)$. If $T$ is Kato and $T$, $T^\prime$ have SVEP at 0, then $T$ is invertible.
\end{lemma}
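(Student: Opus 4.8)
The plan is to use the two SVEP hypotheses to force $T$ to be injective and surjective, respectively; invertibility then follows at once, since $R(T)$ is closed (because $T$ is Kato), so a bijective $T$ is invertible by the bounded inverse theorem. Throughout I will use the fact that a Kato operator is in particular of Kato type (take $M=X$, $N=\{0\}$), so that the Aiena--Rosas characterization of SVEP at a point for Kato type operators applies.

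First I would show that $T$ is injective. Since $T$ is Kato (semi-regular), the classical Kato/Mbekhta perturbation theorem for semi-regular operators provides $\epsilon>0$ such that $T-\lambda$ is Kato for every $\lambda\in D(0,\epsilon)$ and the functions $\lambda\mapsto\alpha(T-\lambda)$ and $\lambda\mapsto\beta(T-\lambda)$ are constant on $D(0,\epsilon)$. Assume, for contradiction, that $\alpha(T)>0$. Then $\alpha(T-\lambda)=\alpha(T)>0$ for all $\lambda\in D(0,\epsilon)$, hence $D(0,\epsilon)\subset\sigma_{ap}(T)$ and in particular $\sigma_{ap}(T)$ clusters at $0$. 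But $T$ is of Kato type and has SVEP at $0$, so by the reversal of the implication \eqref{w2} in the Kato type case (Aiena--Rosas) $\sigma_{ap}(T)$ cannot cluster at $0$ --- a contradiction. Therefore $\alpha(T)=0$, i.e. $N(T)=\{0\}$.

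Dually, I would show that $T$ is surjective. With the same $\epsilon$, if $\beta(T)>0$ then $\beta(T-\lambda)=\beta(T)>0$ for all $\lambda\in D(0,\epsilon)$, so $D(0,\epsilon)\subset\sigma_{su}(T)$ and $\sigma_{su}(T)$ clusters at $0$; since $T$ is of Kato type and $T^\prime$ has SVEP at $0$, the reversal of \eqref{w3} forces $\sigma_{su}(T)$ not to cluster at $0$, a contradiction. Hence $\beta(T)=0$, i.e. $R(T)=X$. Combining the two steps, $T$ is a bounded bijection of the Banach space $X$ onto itself, so $T^{-1}\in L(X)$ by the open mapping theorem; that is, $T\in L(X)^{-1}$.

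The only genuinely non-formal ingredient here is the constancy of $\alpha(T-\lambda)$ and $\beta(T-\lambda)$ on a punctured (in fact full) neighbourhood of $0$ for a Kato operator; everything else is bookkeeping. If one prefers a self-contained argument, the injectivity step can be replaced by a direct construction: assuming $0\neq x_0\in N(T)$, one uses that for a Kato operator the hyper-range $R^\infty(T)=\bigcap_{n}R(T^n)$ is closed and $T$ maps it onto itself, so by the open mapping theorem there is $c>0$ and a sequence $(x_n)$ in $R^\infty(T)$ with $Tx_n=x_{n-1}$ and $\|x_n\|\le c^n\|x_0\|$; then $f(\lambda)=\sum_{n\ge 0}x_n\lambda^n$ is a nonzero analytic function on $D(0,1/c)$ with $(T-\lambda)f(\lambda)=0$, contradicting the SVEP of $T$ at $0$. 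The surjectivity step is then obtained by applying the same construction to $T^\prime$, which is again Kato.
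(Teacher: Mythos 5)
Your proof is correct. The paper itself offers no argument at all for this lemma: it simply cites \cite[Theorem 2.49]{Ai}, which is precisely the statement that for a semi-regular (Kato) operator, SVEP of $T$ at $0$ is equivalent to injectivity and SVEP of $T^\prime$ at $0$ is equivalent to surjectivity. What you have done is reconstruct a proof of that cited theorem. Your first route assembles it from two other nontrivial imported facts --- the local constancy of $\alpha(T-\lambda)$ and $\beta(T-\lambda)$ on a neighbourhood of $0$ for Kato $T$, and the Aiena--Rosas reversal of the implications \eqref{w2} and \eqref{w3} for Kato type operators (which applies since a Kato operator is trivially of Kato type with $N=\{0\}$) --- so it is logically sound but not really more elementary than the citation; it trades one black box for two. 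Your second, self-contained argument (building $f(\lambda)=\sum_n x_n\lambda^n$ from a chain $Tx_n=x_{n-1}$ inside the closed hyper-range, on which $T$ acts surjectively and hence openly, and then dualizing, using that $T^\prime$ is again Kato and that $N(T^\prime)=\{0\}$ together with closedness of $R(T)$ gives $R(T)=X$) is in fact the core of the standard textbook proof of Aiena's Theorem 2.49 and is the genuinely self-contained version; it correctly uses that $N(T)\subset R^\infty(T)$ for Kato $T$ so that the chain can be started at $x_0$. Either route is acceptable; no gaps.
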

\begin{proof} It follows from \cite[Theorem 2.49]{Ai}.
\end{proof}
\begin{lemma}\label{lema-meromorphic}
Let $T\in L(X)$ and let $(M,N)\in Red (T)$. Then
  $T\in (\M)$ if and only if $T_M\in (\M)$ and $T_N\in (\M)$.
\end{lemma}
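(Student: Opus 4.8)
The plan is to reduce the meromorphic property to a local condition on ascent and descent and then to exploit the way these two quantities transform under the decomposition $X=M\oplus N$. I would begin by recalling the classical fact that, for a bounded operator $S$, a point $\lambda_0$ is a pole of the resolvent of $S$ precisely when $\lambda_0\in\sigma(S)$ and $a(\lambda_0-S)=\delta(\lambda_0-S)<\infty$, this common value (the order of the pole) being then $\ge 1$; equivalently, $S\in(\M)$ if and only if $a(\lambda-S)=\delta(\lambda-S)<\infty$ for every $\lambda\in\sigma(S)\setminus\{0\}$. Next I would record the elementary bookkeeping coming from $(M,N)\in Red(T)$: since $\lambda-T=(\lambda-T_M)\oplus(\lambda-T_N)$ on $M\oplus N$, one has $N((\lambda-T)^n)=N((\lambda-T_M)^n)\oplus N((\lambda-T_N)^n)$ and $R((\lambda-T)^n)=R((\lambda-T_M)^n)\oplus R((\lambda-T_N)^n)$ for all $n\in\NN_0$, hence $\sigma(T)=\sigma(T_M)\cup\sigma(T_N)$ and, using that each kernel chain and each range chain stays stationary once it becomes stationary, $a(\lambda-T)=\max\{a(\lambda-T_M),a(\lambda-T_N)\}$ and $\delta(\lambda-T)=\max\{\delta(\lambda-T_M),\delta(\lambda-T_N)\}$, with the obvious convention when a term is $\infty$.

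For the ``if'' direction I would assume $T_M,T_N\in(\M)$ and take $\lambda_0\in\sigma(T)\setminus\{0\}$. For each summand $S\in\{T_M,T_N\}$, either $\lambda_0\notin\sigma(S)$, in which case $\lambda_0-S$ is invertible and $a(\lambda_0-S)=\delta(\lambda_0-S)=0$, or $\lambda_0\in\sigma(S)\setminus\{0\}$ is a pole of the resolvent of $S$, giving $a(\lambda_0-S)=\delta(\lambda_0-S)<\infty$; in both cases $a(\lambda_0-S)=\delta(\lambda_0-S)<\infty$. Taking maxima yields $a(\lambda_0-T)=\delta(\lambda_0-T)<\infty$, and this number is $\ge 1$ because $\lambda_0\in\sigma(T)$, so $\lambda_0$ is a pole of the resolvent of $T$, whence $T\in(\M)$. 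For the converse I would assume $T\in(\M)$ and, by symmetry, treat only $T_M$: if $\lambda_0\in\sigma(T_M)\setminus\{0\}$ then $\lambda_0\in\sigma(T)\setminus\{0\}$, so $a(\lambda_0-T)=\delta(\lambda_0-T)=p<\infty$; the max-formulas force $a(\lambda_0-T_M)\le p$ and $\delta(\lambda_0-T_M)\le p$, both finite, whence $a(\lambda_0-T_M)=\delta(\lambda_0-T_M)<\infty$ by the classical fact that a finite ascent and a finite descent coincide. Since $\lambda_0\in\sigma(T_M)$ this common value is $\ge 1$, so $\lambda_0$ is a pole of the resolvent of $T_M$, and therefore $T_M\in(\M)$ (and likewise $T_N\in(\M)$).

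I do not expect a serious obstacle here; the argument is short once the local reformulation is in place. The only points requiring a little care are the edge cases around invertibility — distinguishing $a=\delta=0$ (a regular point, no pole) from $a=\delta\ge 1$ (a genuine pole) — and justifying the ``$\max$'' formulas for ascent and descent under the direct sum by the monotonicity and stationarity of the kernel and range chains rather than asserting them. A slightly less self-contained alternative would decompose the resolvent as $(T-\lambda)^{-1}=(T_M-\lambda)^{-1}\oplus(T_N-\lambda)^{-1}$ and read off pole orders from the Laurent expansions, but the ascent–descent route above is cleaner and relies only on notions already introduced.
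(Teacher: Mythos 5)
Your proof is correct and follows essentially the same route as the paper: the paper simply cites the identity $\sigma_D(T)=\sigma_D(T_M)\cup\sigma_D(T_N)$ together with the characterization of meromorphic operators as those with $\sigma_D(T)\subset\{0\}$, and your max-formulas for ascent and descent of $\lambda-T$ under the decomposition $M\oplus N$ are precisely a self-contained proof of that identity. No gap; you have just written out in detail what the paper invokes in one line.
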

\begin{proof}
Since $\sigma_D(T)=\sigma_D(T_M)\cup \sigma_D(T_N)$, it follows that $\sigma_D(T)\subset \{0\}$ if and only if $\sigma_D(T_M)\subset \{0\}$ and $\sigma_D(T_N)\subset \{0\}$. Consequently, $T$ is meromorphic if and only if $T_M$ and $T_N$ are meromorphic.
\end{proof}
\begin{lemma}\label{suma}
Let $T \in L(X)$ and $(M,N) \in Red(T)$. The following statements
hold:
\par \noindent {\rm (i)} $T \in {\bf BR}_i$,  $1 \leq i \leq 6$, if and only if $T_M
\in {\bf BR}_i$ and $T_N \in {\bf BR}_i$ and  then $\ind (T)=\ind(T_M)+\ind(T_N)$;
\par \noindent {\rm (ii)}
If $7 \leq i \leq 9$, $T_M \in {\bf BR}_i$ and $T_N \in {\bf BR}_i$,  then $T \in {\bf
BR}_i$.

\snoi{\rm (iii)} If  $7 \leq i \leq 9$, $T \in {\bf BR}_i$ and $T_N$ is B-Weyl, then
$T_M \in {\bf BR}_i$.
\end{lemma}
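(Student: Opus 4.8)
The plan is to reduce everything to one structural observation. For $(M,N)\in Red(T)$ and every $n\in\NN_0$ we have $X=M\oplus N$, $R(T^n)=R(T_M^n)\oplus R(T_N^n)$ and $N(T^n)=N(T_M^n)\oplus N(T_N^n)$; since $M,N$ are closed and the sums are direct, $R(T_M^n)=R(T^n)\cap M$ and $R(T_N^n)=R(T^n)\cap N$, so $R(T^n)$ is closed if and only if both $R(T_M^n)$ and $R(T_N^n)$ are closed. Because $T(R(T_M^n))\subset R(T_M^{n+1})\subset R(T_M^n)$ and likewise for $N$, the pair $(R(T_M^n),R(T_N^n))$ completely reduces $T_n$ and $T_n=(T_M)_n\oplus (T_N)_n$ with respect to this decomposition. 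This is the only fact about the reducing pair that will be used.

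First I would record the monotonicity step, which I expect to be the only genuinely technical point: if for some $n$ the space $R(T^n)$ is closed and $T_n\in{\bf R}_i$ with $1\le i\le 6$, then $R(T^m)$ is closed and $T_m\in{\bf R}_i$ for every $m\ge n$. Indeed $T_n^{\,m-n}$ is the restriction of $T^{m-n}$ to $R(T^n)$ and again lies in ${\bf R}_i$ (products of bounded below, of surjective, of invertible, of upper semi-Fredholm, of lower semi-Fredholm, of Fredholm operators on a fixed space stay in the class), so $R(T^m)=R(T_n^{\,m-n})$ is closed; and $T_m$, being the restriction of $T_n$ to the $T_n$-invariant subspace $R(T^m)\subset R(T^n)$, inherits from $T_n$ injectivity, finiteness of $\alpha$, finiteness of $\beta$ as appropriate, all ranges involved already being closed. (For $1\le i\le 3$ one may instead argue directly with ascent and descent.) This is exactly the stability underlying \cite[Proposition 2.1]{Ber}, from which the index $\ind(T)=\ind(T_n)$ is well defined for (semi-)B-Fredholm $T$.

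Then (i) follows by combining the two. If $T\in{\bf BR}_i$, pick $n$ with $R(T^n)$ closed and $T_n\in{\bf R}_i$; then $R(T_M^n),R(T_N^n)$ are closed, $T_n=(T_M)_n\oplus (T_N)_n$, and the classical direct-sum behaviour of the classes ${\bf R}_i$, $1\le i\le 6$ (namely $S_P\oplus S_Q\in{\bf R}_i$ iff $S_P,S_Q\in{\bf R}_i$, with $\ind(S_P\oplus S_Q)=\ind S_P+\ind S_Q$ when $4\le i\le 6$) gives $(T_M)_n,(T_N)_n\in{\bf R}_i$, i.e. $T_M,T_N\in{\bf BR}_i$, and $\ind T=\ind T_n=\ind (T_M)_n+\ind (T_N)_n=\ind T_M+\ind T_N$. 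Conversely, if $T_M,T_N\in{\bf BR}_i$, take witnesses $n_M,n_N$, set $n=\max(n_M,n_N)$, use the monotonicity step to make both work with this common $n$, and read the same equivalence backwards to obtain $R(T^n)$ closed and $T_n\in{\bf R}_i$, hence $T\in{\bf BR}_i$.

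Finally (ii) and (iii) are immediate corollaries of (i). For (ii), $7\le i\le 9$: $T_M,T_N\in{\bf BR}_i$ makes $T_M,T_N$ (upper, lower semi-)B-Fredholm, so by (i) $T$ is (upper, lower semi-)B-Fredholm with $\ind T=\ind T_M+\ind T_N$, and the defining index constraint of ${\bf BR}_i$ ($\le 0$, $\ge 0$, or $=0$) survives addition of two indices each satisfying it; thus $T\in{\bf BR}_i$. (The converse genuinely fails, e.g. indices $1$ and $-1$, which is why only this implication is asserted.) For (iii), $7\le i\le 9$: $T\in{\bf BR}_i$ forces $T$ (upper, lower semi-)B-Fredholm, and $T_N$ B-Weyl forces $T_N$ B-Fredholm with $\ind T_N=0$; applying the appropriate ${\bf BR}_j$, $j\in\{4,5,6\}$, case of (i) gives $T_M\in{\bf BR}_j$ with $\ind T_M=\ind T-\ind T_N=\ind T$, so the index constraint of ${\bf BR}_i$ passes from $T$ to $T_M$, i.e. $T_M\in{\bf BR}_i$.
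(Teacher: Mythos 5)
Your proof is correct, and you derive (ii) and (iii) from (i) exactly as the paper does, but your proof of (i) takes a genuinely different route. You stay with the raw definition of ${\bf BR}_i$: decompose the restriction $T_n=(T_M)_n\oplus(T_N)_n$ on $R(T^n)=R(T_M^n)\oplus R(T_N^n)$, use the stability of the witness $n$ (your ``monotonicity step'', i.e.\ Berkani's result that once $R(T^n)$ is closed and $T_n\in{\bf R}_i$ the same holds for every $m\ge n$) to arrange a common witness for $T_M$ and $T_N$, and then apply the elementary direct-sum behaviour of the classes ${\bf R}_i$. The paper instead converts membership in ${\bf BR}_i$, $1\le i\le 6$, into finiteness or vanishing of the sequences $c_n$ and $c_n^\prime$ via Berkani's characterization of these classes as left/right (essentially) Drazin invertible operators, and then uses the additivity $c_n(T)=c_n(T_M)+c_n(T_N)$ and $c_n^\prime(T)=c_n^\prime(T_M)+c_n^\prime(T_N)$; since these sequences are decreasing, no common-witness argument is needed there, and the index formula drops out of $\ind(T)=c_n^\prime(T)-c_n(T)$ for large $n$. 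Your route is closer to first principles but puts all the weight on the monotonicity step; note that in the lower semi-Fredholm case the finiteness of $\beta(T_m)$ is not ``inherited'' in the sense that restrictions of lower semi-Fredholm operators to closed invariant subspaces stay lower semi-Fredholm (they need not), but holds because $\beta(T_m)=c_m(T)\le c_n(T)=\beta(T_n)$ --- your appeal to Berkani's proposition covers this, but that phrase deserves one extra line. Both proofs ultimately rest on the same structural identities $R(T^n)=R(T_M^n)\oplus R(T_N^n)$ and $N(T^n)=N(T_M^n)\oplus N(T_N^n)$, and on the equivalence of closedness of $R(T^n)$ with closedness of its two summands.
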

\begin{proof}
Since  $N(T^n)=N(T_M^n)\oplus N(T_N^n)$ and
$R(T^n)=R(T_M^n)\oplus R(T_N^n)$ for every $n\in\NN_0$, it follows that $c_n(T)=c_n(T_M)+c_n(T_N)$ and $c_n^\p(T)=c_n^\p(T_M)+c_n^\p(T_N)$. Thus  $c_n(T)=0$ if and only if $c_n(T_M)=c_n(T_N)=0$, and  $c_n^\p(T)=0$ if and only if $c_n^\p(T_M)=c_n^\p(T_N)=0$. Also $c_n(T)<\infty$ if and only if $c_n(T_M)<\infty$, $c_n(T_N)<\infty$, and $c_n^\p(T)<\infty$ if and only if $c_n^\p(T_M)<\infty$, $c_n^\p(T_N)<\infty$. Therefore, $\delta(T)<\infty$ if and only if $\delta(T_M)<\infty$ and $\delta(T_N)<\infty$,  in which  case $\delta(T)=\max\{\delta(T_M), \delta(T_N)\}$, and similarly for the  ascent, the essential descent and the essential ascent. As $R(T^n)$ is closed if and only
if $R(T_M^n)$ and $R(T_N^n)$ are closed \cite[Lemma 3.3]{kinezi}, for every $n\in\NN_0$,  we get that  $T$ is left  Drazin invertible (right  Drazin invertible, left (right) essentially  Drazin invertible)  if and only if $T_M$ and $T_N$ are left  Drazin invertible (right  Drazin invertible, left (right) essentially  Drazin invertible), which by \cite[Theorem 3.6]{Ber0} means that $T \in {\bf BR}_i$,  $1 \leq i \leq 6$, if and only if $T_M
\in {\bf BR}_i$ and $T_N \in {\bf BR}_i$. In that case, for $n$ large enough, we have that
\begin{eqnarray*}
  \ind (T) &=& c_n^\p(T)- c_n(T)=c_n^\p(T_M)+c_n^\p(T_N)-(c_n(T_M)+c_n(T_N))\\
   &=& (c_n^\p(T_M)-c_n(T_M)) + (c_n^\p(T_N)-c_n(T_N))= \ind (T_M)+\ind (T_N).
\end{eqnarray*}

(ii) follows from (i).

(iii):  Suppose that $T$ is upper semi-B-Weyl and that $T_N$ is B-Weyl. Then $\ind (T_N)=0$ and from {\rm (i)} it follows that $T_M$ is upper semi-B-Fredholm  and $\ind (T_M)=\ind(T_M)+\ind(T_N)=\ind (T)\le 0$. Hence $T_M$ is upper semi-B-Weyl.

Similarly for the rest of the cases.
\end{proof}
\begin{lemma}\label{zatvorenost} Let $X=X_1\oplus X_2\dots\oplus X_n$ where $X_1,\ X_2,\dots ,X_n$ are closed subspaces of $X$ and let $M_i$ be a closed subset of $X_i$, $i=1,\dots,n$. Then the set $M_1\oplus M_2\oplus\dots\oplus M_n$ is   closed.
\end{lemma}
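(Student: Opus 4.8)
The plan is to realise the set $M_1\oplus\cdots\oplus M_n$ as a finite intersection of preimages of closed sets under continuous linear maps. First I would recall that, since $X=X_1\oplus\cdots\oplus X_n$ is a direct sum decomposition of $X$ into closed subspaces, the associated coordinate projections $P_i\colon X\to X_i$ are bounded. This is the standard consequence of the closed graph theorem: equivalently, the map $X_1\times\cdots\times X_n\to X$ given by $(x_1,\dots,x_n)\mapsto x_1+\cdots+x_n$ is a continuous linear bijection between Banach spaces, hence a topological isomorphism by the open mapping theorem, and $P_i$ is the composition of its inverse with the $i$-th coordinate projection of the product.

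Next I would observe that, because every $x\in X$ has a unique representation $x=x_1+\cdots+x_n$ with $x_i\in X_i$, and $P_ix=x_i$, one has $x\in M_1\oplus\cdots\oplus M_n$ if and only if $P_ix\in M_i$ for every $i=1,\dots,n$. Therefore
\[
M_1\oplus\cdots\oplus M_n=\bigcap_{i=1}^{n}P_i^{-1}(M_i).
\]
Each $M_i$ is closed in $X_i$, and since $X_i$ is closed in $X$, $M_i$ is closed in $X$ as well; as $P_i$ is continuous, $P_i^{-1}(M_i)$ is closed in $X$. A finite intersection of closed sets is closed, which yields the claim.

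I do not anticipate any real obstacle here; the only point that deserves a line of justification is the boundedness of the projections $P_i$, and this is entirely standard. (One could instead argue by induction on $n$, reducing to the case $n=2$, but the projection argument disposes of all $n$ at once.)
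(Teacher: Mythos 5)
Your proof is correct and rests on the same key fact as the paper's: that the sum map $X_1\times\cdots\times X_n\to X$ is a topological isomorphism (the paper uses it directly to push the closed set $M_1\times\cdots\times M_n$ forward, while you use it to get bounded projections $P_i$ and then write the set as $\bigcap_i P_i^{-1}(M_i)$). This is only a cosmetic difference in the final step, so the two arguments are essentially the same.
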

\begin{proof}
Consider Banach space $X_1\times X_2\times\dots\times X_n$ provided with the canonical norm $\|(x_1,\dots, x_n)\|=\sum_{i=1}^n\|x_i\|$, $x_i\in X_i$, $i=1,\dots,n$. Then the map  $f:X_1\times\dots\times X_n\to X_1\oplus\dots\oplus X_n=X$    defined by $f((x_1,\dots,x_n)) =x_1+\dots +x_n$, $x_i\in X_i$, $i=1,\dots,n$, is a
  homeomorphism. Since $M_1\times M_2\times\dots\times M_n$ is closed in  $X_1\times X_2\times\dots\times X_n$, it follows that $f(M_1\times M_2\times\dots\times M_n)=M_1\oplus M_2\oplus\dots\oplus M_n$ is closed.
\end{proof}

\begin{lemma} \label{gap0}
Let $T \in L(X)$ and $1\le i\le 9$. Then the following implication holds:

$$T\ {\rm is\ Kato}\ \ \mbox{and}\  \ 0\notin\inter\, \sigma_{\bf BR_i}(T)\Longrightarrow T\in {\bf R_i}.$$
%
%
%
%
%
%
\end{lemma}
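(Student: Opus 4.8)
The plan is to exploit the fact that an operator which is Kato is automatically of a very rigid type: by \cite[Theorem 4.7]{Grabiner} (Grabiner's punctured neighborhood theorem), a Kato operator has $T-\lambda$ invertible (hence in every ${\bf R_i}$, since ${\bf BR_i}(T-\lambda)$ holds there too) for all sufficiently small nonzero $\lambda$. Concretely, if $T$ is Kato then $T$ has topological uniform descent, and Grabiner's theorem gives $\epsilon>0$ such that for $0<|\lambda|<\epsilon$ the operator $T-\lambda$ is bounded below and onto, i.e. invertible. In particular $T-\lambda\in{\bf R_i}$ and so $T-\lambda\in{\bf BR_i}$ for all such $\lambda$. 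This shows the punctured disc $D(0,\epsilon)\setminus\{0\}$ is disjoint from $\sigma_{\bf BR_i}(T)$.

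Next I would invoke the hypothesis $0\notin\inter\,\sigma_{\bf BR_i}(T)$. Since every point of the punctured disc avoids $\sigma_{\bf BR_i}(T)$, the only way $0$ could lie in $\sigma_{\bf BR_i}(T)$ is as an isolated point of it; but an isolated point of a set is not an interior point, so in fact the hypothesis forces $0\notin\sigma_{\bf BR_i}(T)$, that is, $T\in{\bf BR_i}(X)$. (More carefully: if $0\in\sigma_{\bf BR_i}(T)$, then together with the punctured-disc emptiness, $0$ would be isolated in $\sigma_{\bf BR_i}(T)$, contradicting — no, rather: $0$ being isolated means $0\notin\inter\,\sigma_{\bf BR_i}(T)$ is consistent. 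So I need the reverse reading.) Let me restate: the hypothesis $0\notin\inter\,\sigma_{\bf BR_i}(T)$ is used only to rule out that $\sigma_{\bf BR_i}(T)$ contains a whole neighbourhood of $0$; combined with Grabiner it is automatically satisfied and gives no information, so the real content must come differently — the point is that $T$ being \emph{Kato} at $0$ upgrades membership in ${\bf BR_i}$ to membership in ${\bf R_i}$.

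So the key step, and the main obstacle, is the upgrade: show that if $T$ is Kato and $T\in{\bf BR_i}(X)$ then $T\in{\bf R_i}(X)$. Recall ${\bf BR_i}(X)$ is characterized via the restrictions $T_n$ to $R(T^n)$ (for $1\le i\le 6$ through left/right essential and non-essential Drazin invertibility, by \eqref{poziv}). When $T$ is Kato, $R(T^n)$ is closed and $N(T)\subset R(T^n)$ for all $n$, so $R(T^{n})=R(T^{n+1})$ need not hold, but the internal structure of $T_n$ stabilizes: in fact for Kato $T$ one has $T_n$ similar to $T$ restricted appropriately and the kernel/cokernel data of $T_n$ coincide with those of $T$ itself (since $N(T)\subset R(T^\infty)$ and $R(T)$ is closed). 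Thus $T_n\in{\bf R_i}$ implies $T_0=T\in{\bf R_i}$. For $1\le i\le 3$ one argues: $T$ Kato and, say, left Drazin invertible forces $a(T)<\infty$; but a Kato operator with finite ascent has $a(T)=0$, whence $N(T)=0$, and combined with closed range gives $T$ bounded below — and similarly for descent/surjectivity. For $4\le i\le 9$ one uses that a Kato operator is semi-regular, so $\alpha(T)=\alpha(T_n)$ and $\beta(T)=\beta(T_n)$ for all $n$ (the defect numbers are constant on the analytic core), whence the (semi-)Fredholm and index conditions transfer from $T_n$ back to $T$. I would cite the relevant constancy-of-defect result for Kato operators (e.g. from \cite{Ai}) rather than reprove it, and then the conclusion $T\in{\bf R_i}$ follows by unwinding the definitions of ${\bf R_i}$ versus ${\bf BR_i}$ in each of the nine cases; the argument for $i=1,2,3$ is the delicate one and the cases $4$–$9$ reduce to it plus the index bookkeeping already recorded in Lemma \ref{suma}.
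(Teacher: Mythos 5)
Your argument breaks down at the very first step. Grabiner's punctured neighborhood theorem does \emph{not} say that a Kato operator has $T-\lambda$ invertible for all small nonzero $\lambda$; it says that the sequences $c_n(T-\lambda)$ and $c_n^\prime(T-\lambda)$ stabilize, for $0<|\lambda|<\epsilon$, to the values $c_0(T)=\beta(T)$ and $c_0^\prime(T)=\alpha(T)$ of the Kato operator $T$ itself. The unilateral right shift is Kato but $T-\lambda$ is non-surjective (indeed of infinite descent) for every $|\lambda|<1$, so the punctured disc is \emph{not} disjoint from $\sigma_{\bf BR_i}(T)$ in general; for the right shift every point of the open unit disc lies in $\sigma_{\bf BR_2}(T)$. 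Consequently the hypothesis $0\notin\inter\,\sigma_{\bf BR_i}(T)$ is far from automatic, and your subsequent inference that $T\in{\bf BR_i}$ is unjustified: the hypothesis only yields \emph{some} $\lambda_0\neq 0$, arbitrarily close to $0$, with $T-\lambda_0\in{\bf BR_i}$. It gives no direct information about $T$ at $0$.

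The ``upgrade'' you isolate at the end (Kato together with $T\in{\bf BR_i}$ implies $T\in{\bf R_i}$) is a true statement, and your sketch for $i\le 3$ (a Kato operator with finite ascent has $\alpha(T)=c_0^\prime(T)=c_n^\prime(T)=0$, hence is bounded below) is essentially sound — but it proves the wrong implication, because you never establish $T\in{\bf BR_i}$. The missing idea, which is the entire content of the paper's proof, is to use Grabiner's theorem as a \emph{transfer device}: since $T$ is Kato it has TUD for $n\ge 0$, so there is $\epsilon>0$ with $c_n^\prime(T-\lambda)=\alpha(T)$ and $c_n(T-\lambda)=\beta(T)$ for all $n\ge 0$ and $0<|\lambda|<\epsilon$; picking $\lambda_0$ in this punctured disc with $T-\lambda_0\in{\bf BR_i}$ forces $c_n^\prime(T-\lambda_0)$ (resp.\ $c_n(T-\lambda_0)$) to vanish or be finite for large $n$, and by the constancy this is exactly $\alpha(T)=0$ (resp.\ $\alpha(T)<\infty$, $\beta(T)=0$, $\beta(T)<\infty$), after which closedness of $R(T)$ and the index identity $\ind(T)=c_0^\prime(T)-c_0(T)=\ind(T-\lambda_0)$ finish each of the nine cases. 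Without this step the proof does not assemble.
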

\begin{proof} 
We prove the assertion for the cases $i=1$ and $i=6$.

 Suppose that $T$ is Kato and that $0$ is not an interior point of $\sigma_{\bf BR_1}(T)$. Then $T$ has TUD for $n\ge 0$,  and from \cite[Theorem 4.7]{Grabiner} we have that  there is an $\epsilon>0$ such that for every $\lambda\in\CC$ the following implication holds:
      \begin{equation}\label{bh1}
    0<|\lambda|<\epsilon\Longrightarrow c_n^\prime(T-\lambda I)=c_0^\prime(T)=\alpha(T)\ {\rm for\ all\ } n\ge 0.
\end{equation}
 From $0\notin \inter\, \sigma_{\bf BR_1}(T)$ it follows that there exists $\lambda_0\in\CC$ such that $0<|\lambda_0|<\epsilon$ and $T-\lambda_0 I$ is a left Drazin invertible  operator. This implies that there exists $n\in\NN_0$ such that $c_n^\prime(T-\lambda_0 I)=0$. From \eqref{bh1} it follows that $\alpha(T)=0$ and since $R(T)$ is closed, we obtain that  $T$ is bounded bellow.

%
%
%

  Suppose  that $T$ is Kato and that $0$ is not an interior point of $\sigma_{B\W_+}(T)$. According to \cite[Theorem 4.7]{Grabiner}   there is an $\epsilon>0$ such that for every $\lambda\in\CC$ the following implication  holds:
      \begin{eqnarray}
& 0<|\lambda|<\epsilon \Longrightarrow\label{bh3}\\& c_n(T-\lambda I)=c_0(T)=\beta(T)\ {\rm and}\   c_n^\prime(T-\lambda I)=c_0^\prime(T)=\alpha(T)\ {\rm for\ all\ }n\ge 0.\nonumber
\end{eqnarray}
From $0\notin \inter\, \sigma_{B\W_+}(T)$, we have that there exists $\lambda_0\in\CC$ such that $0<|\lambda_0|<\epsilon$ and $T-\lambda_0 I$ is an upper semi-B-Weyl operator.
  Therefore,
    $$c_n^\prime(T-\lambda_0 I)=\dim (N(T- \lambda_0 I)\cap R((T-\lambda_0 I)^n))<\infty $$ for $n$ large enough and according to \eqref{bh3} we obtain that $\alpha(T)<\infty$.
     As $R(T)$ is closed, we get that  $T$ is upper semi-Fredholm.
    From
    \begin{eqnarray*}
  {\rm ind }(T)
  & =&c_0^\prime(T)- c_0(T)=c_n^\prime(T- \lambda_0 I)- c_n(T-\lambda_0  I)\\&=&{\rm ind}(T-\lambda_0  I)\le 0,
\end{eqnarray*}
 it follows that $T$ is an upper semi-Weyl operator.

  The remaining cases can be proved similarly.
\end{proof}

Let $U\subset X$ and $W\subset
X^\prime $. The annihilator of $U$ is the set $U^\bot=\{x'\in X':
x'(u)=0\text{\ for every\ }u\in U\}$, and the annihilator of $W$
is the set $^\bot W=\{x\in X: w(x)=0\text{\ for every\ }w\in
W\}$.
\begin{lemma}\label{help} Let $(M,N)\in Red (T)$. Then
$T$ admits a $GK(\M)D(M,N)$ if and only if $T^\prime$ admits a $GK(\M)D(N^\bot,M^\bot)$.
\end{lemma}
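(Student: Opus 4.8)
The plan is to exploit the standard duality between a direct-sum decomposition $X = M \oplus N$ and the decomposition $X' = N^\bot \oplus M^\bot$ of the dual space, together with the fact that the two "building-block" properties in the definition of $GK(\M)D$ — being Kato and being meromorphic — are each self-dual in the appropriate sense. First I would recall the two elementary facts about annihilators in a direct sum: if $(M,N)\in Red(T)$, then $X' = N^\bot \oplus M^\bot$, both summands are $T'$-invariant, and under the natural identifications $M' \cong X'/M^\bot \cong N^\bot$ and $N' \cong X'/N^\bot \cong M^\bot$ one has $(T_M)' \cong (T')_{N^\bot}$ and $(T_N)' \cong (T')_{M^\bot}$. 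These are standard (e.g. from the theory in Aiena's or Müller's books) and I would simply cite them; they reduce the whole statement to: (a) $T_M$ is Kato $\iff (T')_{N^\bot}$ is Kato, and (b) $T_N\in(\M) \iff (T')_{M^\bot}\in(\M)$.

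For (a), the key input is the well-known duality for Kato operators: a bounded operator $S$ on a Banach space is Kato if and only if $S'$ is Kato. This follows from the fact that Kato-ness can be characterized via semiregularity ($R(S)$ closed and $N(S^n)\subseteq R(S^m)$ for all $m,n$, equivalently $N(S)\subseteq R(S^\infty)$), $R(S)$ is closed iff $R(S')$ is closed, and the annihilator relations $N(S')^\bot$, $R(S')$ dualize the kernel-range inclusions. I would state this as a cited fact and apply it to $S = T_M$, using the identification $(T_M)' \cong (T')_{N^\bot}$. For (b), I would invoke Lemma~\ref{lema-meromorphic}'s underlying mechanism: an operator is meromorphic iff its Drazin spectrum is contained in $\{0\}$, and $\sigma_D(S) = \sigma_D(S')$ because Drazin invertibility is preserved under taking adjoints (if $STS = S$, $TS = ST$ with $TST - T$ nilpotent, then $S'T'S' = S'$, $T'S' = S'T'$, $T'S'T' - T'$ nilpotent — and $0\notin\sigma_D$ behaves the same way, or more simply $\sigma_D$ is symmetric under adjoints since the resolvent poles of $T$ and $T'$ coincide). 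Hence $T_N\in(\M) \iff (T_N)'\in(\M) \iff (T')_{M^\bot}\in(\M)$.

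Assembling: if $T$ admits a $GK(\M)D(M,N)$, then $(T')_{N^\bot} = (T_M)'$ is Kato and $(T')_{M^\bot} = (T_N)'$ is meromorphic, with $(N^\bot, M^\bot)\in Red(T')$, so $T'$ admits a $GK(\M)D(N^\bot, M^\bot)$; the converse is identical, reading the equivalences backwards. I expect the only genuine point requiring care — rather than a "main obstacle" — is making the identifications $(T_M)'\cong (T')_{N^\bot}$ and $(T_N)'\cong (T')_{M^\bot}$ precise and citing the right reference for them, since the isomorphisms are canonical but their $T'$-equivariance needs the $T$-invariance of $M$ and $N$; once those are in hand, the self-duality of the Kato and meromorphic classes makes the argument essentially a bookkeeping exercise.
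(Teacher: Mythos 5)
Your proposal is correct, and in its crucial half it takes a genuinely different route from the paper. Both arguments handle the meromorphic part the same way in substance: the paper passes through the projection $P_N$, writes $TP_N=0\oplus T_N$, and uses that $TP_N\in(\M)$ iff $T^\prime P_N^\prime\in(\M)$ together with Lemma~\ref{lema-meromorphic}, which is just your identification $(T_N)^\prime\cong (T^\prime)_{M^\bot}$ plus $\sigma_D(S)=\sigma_D(S^\prime)$ in slightly different clothing. The real divergence is in the Kato part of the converse direction. You reduce it to the canonical topological isomorphism $(T_M)^\prime\cong(T^\prime)_{N^\bot}$ (extension by zero on $N$, which is bounded because the projection along $N$ is) followed by the self-dual characterization of the Kato class ($S$ is Kato iff $S^\prime$ is Kato, Aiena's Theorem~1.43); the paper instead works entirely inside $X^\prime$, computing $N((T^\prime_{N^\bot})^n)=(R(T_M^n)+N)^\bot$, establishing closedness of $R(T_M^n)+N$ via Lemma~\ref{zatvorenost}, and then undoing the annihilators with Schechter's identity $^\bot(U^\bot)=U$ for closed $U$ to extract $N(T_M)\subset R(T_M^n)$. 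Your route is shorter and more conceptual but leans on two cited black boxes (the $T^\prime$-equivariant identification and the full ``iff'' duality for Kato operators); the paper's route is longer but essentially re-derives that duality by hand for the specific subspaces at issue, which is why it only needs to cite ``the proof of'' Theorem~1.43 rather than its statement. The one point you should make explicit if you write this up is that Kato-ness and membership in $(\M)$ are invariant under similarity by a topological isomorphism, since that is what lets you transfer the properties across the identifications; with that noted, your argument is complete.
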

\begin{proof}
Suppose that $T$ admits a $GK(\M)D(M,N)$. Then  $T_M$ is Kato, $T_N\in (\M)$ and  $(N^\bot, M^\bot)\in Red (T^\prime)$.
 Let $P_N$ be the projection  of $X$ onto $N$ along $M$. Then $(M,N)\in Red (TP_N)$,  $TP_N=P_NT$, $TP_N=0\oplus T_N$, and so,  according to Lemma \ref{lema-meromorphic}, it follows that  $TP_N\in (\M)$.
\ Consequently, $T^\prime P_N^\prime=P_N^\prime T^\prime\in (\M)$, $(N^\bot, M^\bot)\in Red (T^\prime P_N^\prime )$ and since  $R(P_N^\prime)=N(P_N)^\bot=M^\bot$,  we conclude that $(T^\prime P_N^\prime)_{M^\bot}={T^\prime}_{M^\bot}\in (\M)$  according to Lemma \ref{lema-meromorphic}. From the proof of Theorem 1.43 in \cite{Ai} it follows  that ${T^\prime}_{N^\bot}$ is Kato. Therefore, $(N^\bot, M^\bot)$ is a $GK(\M)D$  for $T^\prime$.

Let $T^\prime$ admit a  $GK(\M)D(N^\bot,M^\bot)$. Then ${T^\prime}_{N^\bot}$ is Kato and ${T^\prime}_{M^\bot}\in (\M)$.  As $(N^\bot, M^\bot)\in Red (T^\prime P_N^\prime)$, then $T^\prime P_N^\prime=(T^\prime P_N^\prime)_{N^\bot}\oplus(T^\prime P_N^\prime)_{M^\bot}=0\oplus {T^\prime}_{M^\bot}$, and according to Lemma \ref{lema-meromorphic} we obtain  $T^\prime P_N^\prime\in (\M)$, which implies $TP_N\in (\M)$. Since $TP_N=0\oplus T_N$, from Lemma \ref{lema-meromorphic} we get $T_N\in (\M)$. Let $P_M=I-P_N$. Since ${T^\prime}_{N^\bot}$ is Kato, it follows that $R(({T^\prime}_{N^\bot})^n)$ is closed  (\cite[Theorem 12.2]{Mu}) and
\begin{equation}\label{gs}
  N((T^\prime_{N^\bot})^n)\subset R(T^\prime_{N^\bot}), \ {\rm for\ every\ } n\in \NN.
\end{equation}

 From $(N^\bot, M^\bot)\in Red (T^\prime P_M^\prime)$, we have $(T^\prime P_M^\prime)^n=(T^\prime P_M^\prime)_{N^\bot}^n\oplus(T^\prime P_N^\prime)_{M^\bot}^n={T^\prime}_{N^\bot}^n\oplus 0$ and
\begin{equation}\label{fr0}
  R((T^\prime P_M^\prime)^n)=R(({T^\prime}_{N^\bot})^n).
\end{equation}
 So $R((T^\prime P_M^\prime)^n)$ is closed which implies that $R((TP_M)^n)=R(T_M^n)$ is closed. As
\begin{eqnarray*}
  N((T^\prime_{N^\bot})^n) &=& N((T^\prime)^n)\cap N^\bot=R(T^n)^\bot\cap N^\bot=(R(T^n)+N)^\bot \\
   &=& (R(T_M^n)+R(T_N^n)+ N)^\bot= (R(T_M^n)+ N)^\bot,
\end{eqnarray*}
from (\ref{gs}) and (\ref{fr0}) we obtain
$$
(R(T_M^n)+ N)^\bot\subset R((TP_M)^\prime)=N(TP_M)^\bot, \ {\rm for\ every\ } n\in \NN,
$$
which implies
\begin{equation}\label{bez}
 N(TP_M)=\, ^\bot{(N(TP_M)^{\bot})}\subset\,  ^\bot{({(R(T_M^n)+ N)}^{\bot})} \ {\rm for\ every\ } n\in \NN.
\end{equation}
From Lemma \ref{zatvorenost} it follows  that $R(T_M^n)+ N $ is closed and therefore, according to \cite[Chapter III, Lemma 3.2]{Sch}, we get $^\bot{({(R(T_M^n)+ N)}^{\bot})}=R(T_M^n)+ N$, $n\in\NN$. Now from (\ref{bez}) we obtain
$$
N(T_M)+N\subset\, R(T_M^n)+ N \ {\rm for\ every\ } n\in \NN.
$$
It implies $N(T_M)\subset\, R(T_M^n)\ {\rm for\ every\ } n\in \NN$ and we can conclude that $T_M$ is Kato.
\end{proof}

\begin{theorem}\label{F2}
The following conditions are equivalent for $T\in L(X)$ and $1\le i\le 9$:

\par\noindent {\rm (i)} There exists $(M,N)\in Red(T)$ such that $ T_M\in {\bf R}_i$ and $T_N\in (\M)$, that is $T\in {\bf GD(\M)R_i}$;

\par\noindent {\rm (ii)} $T$ admits a $GK(\M)D$ and
$0\notin{\rm int}\, \sigma_{{\bf R}_i}(T)$;

\par\noindent {\rm (iii)} $T$ admits a $GK(\M)D$ and
$0\notin{\rm acc}\, \sigma_{{\bf BR}_i}(T)$;

\par\noindent {\rm (iv)} $T$ admits a $GK(\M)D$ and
$0\notin{\rm int}\, \sigma_{{\bf BR}_i}(T)$.
\end{theorem}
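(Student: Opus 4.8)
The plan is to establish the cycle (i) $\Rightarrow$ (ii) $\Rightarrow$ (iv) $\Rightarrow$ (i) and to bring (iii) in via (i) $\Rightarrow$ (iii) $\Rightarrow$ (iv). Two of the implications are free of content. For (ii) $\Rightarrow$ (iv): ${\bf R}_i\subset{\bf BR}_i$, so $\sigma_{{\bf BR}_i}(T)\subset\sigma_{{\bf R}_i}(T)$, hence ${\rm int}\,\sigma_{{\bf BR}_i}(T)\subset{\rm int}\,\sigma_{{\bf R}_i}(T)$ and $0\notin{\rm int}\,\sigma_{{\bf R}_i}(T)$ forces $0\notin{\rm int}\,\sigma_{{\bf BR}_i}(T)$; the $GK(\M)D$ clause is common to (ii) and (iv). For (iii) $\Rightarrow$ (iv): if $0\notin\acc\,\sigma_{{\bf BR}_i}(T)$ some punctured disc about $0$ avoids $\sigma_{{\bf BR}_i}(T)$, so a fortiori $0\notin{\rm int}\,\sigma_{{\bf BR}_i}(T)$.

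For (i) $\Rightarrow$ (ii), write $T=T_M\oplus T_N$ with $T_M\in{\bf R}_i$, $T_N\in(\M)$. That $T$ admits a $GK(\M)D$ is clear: for $1\le i\le 3$, $T_M$ is already Kato (bounded below, surjective and invertible operators are Kato); for $4\le i\le 9$, $T_M$ is semi-Fredholm, hence of Kato type, say $M=M_1\oplus M_2$ with $(T_M)_{M_1}$ Kato and $(T_M)_{M_2}$ nilpotent, and then by Lemma \ref{lema-meromorphic} the pair $(M_1,M_2\oplus N)$ is a $GK(\M)D$ for $T$. For $0\notin{\rm int}\,\sigma_{{\bf R}_i}(T)$: since $T_N\in(\M)$, the non-zero spectrum of $T_N$ near $0$ consists of isolated poles, so there are $\lambda_k\to 0$, $\lambda_k\ne 0$, with $T_N-\lambda_k$ invertible; for $k$ large $T_M-\lambda_k\in{\bf R}_i$ by stability of ${\bf R}_i$ under small perturbations, whence $T-\lambda_k=(T_M-\lambda_k)\oplus(T_N-\lambda_k)\in{\bf R}_i$ (direct sum with an invertible operator stays in ${\bf R}_i$), so $\lambda_k\notin\sigma_{{\bf R}_i}(T)$. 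Since in fact $T_N-\lambda$ is Drazin invertible --- hence in ${\bf BR}_i$ with index $0$ --- for \emph{every} $\lambda\ne 0$, the same argument combined with Lemma \ref{suma} shows that a whole punctured disc about $0$ avoids $\sigma_{{\bf BR}_i}(T)$; this gives (i) $\Rightarrow$ (iii).

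The heart of the matter is (iv) $\Rightarrow$ (i). Let $(M,N)$ be a $GK(\M)D$ for $T$, so $T_M$ is Kato and $T_N\in(\M)$. I claim $\sigma_{{\bf BR}_i}(T_M)\subset\sigma_{{\bf BR}_i}(T)$, equivalently $T-\lambda\in{\bf BR}_i\Rightarrow T_M-\lambda\in{\bf BR}_i$ for all $\lambda$. For $1\le i\le 6$ this is part of Lemma \ref{suma}(i). For $7\le i\le 9$: $T-\lambda\in{\bf BR}_i$ is in particular semi-B-Fredholm, so by Lemma \ref{suma}(i) applied to $T-\lambda=(T_M-\lambda)\oplus(T_N-\lambda)$ both summands are semi-B-Fredholm; now $T_N-\lambda$ is meromorphic and semi-B-Fredholm, hence of index $0$, hence B-Weyl, and Lemma \ref{suma}(iii) gives $T_M-\lambda\in{\bf BR}_i$. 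Granting the claim, ${\rm int}\,\sigma_{{\bf BR}_i}(T_M)\subset{\rm int}\,\sigma_{{\bf BR}_i}(T)$, so (iv) yields $0\notin{\rm int}\,\sigma_{{\bf BR}_i}(T_M)$; since $T_M$ is Kato, Lemma \ref{gap0} now gives $T_M\in{\bf R}_i$, and together with $T_N\in(\M)$ this is exactly (i).

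The step I expect to be the real obstacle is the one used for $7\le i\le 9$: a meromorphic operator which is semi-B-Fredholm must be B-Weyl. This I would prove via Berkani's decomposition $T_N-\lambda=F\oplus Q$ with $F$ semi-Fredholm and $Q$ nilpotent, using Lemma \ref{lema-meromorphic} to see that $F$ is meromorphic, and then the elementary fact that a Drazin invertible semi-Fredholm operator is Browder (hence of index $0$) applied to $F-\mu$ for small $\mu\ne 0$; an alternative is a direct appeal to the punctured-neighbourhood theorem for semi-B-Fredholm operators, which gives that $\ind(T_N-\lambda-\mu)$ is constant for small $\mu\ne 0$ and equals $\ind(T_N-\lambda)$, that constant being $0$ by Drazin invertibility. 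Everything else is direct-sum bookkeeping through Lemma \ref{suma} and a single invocation of Lemma \ref{gap0}.
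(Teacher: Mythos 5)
Your proposal is correct and follows essentially the same route as the paper: the same implication cycle, the same use of the finite-dimensional Kato-type decomposition of $T_M$ for $4\le i\le 9$ in (i)$\Rightarrow$(ii), the Drazin invertibility of $T_N-\lambda$ for $\lambda\ne 0$ together with Lemma \ref{suma} for (i)$\Rightarrow$(iii) and for transferring ${\bf BR}_i$ from $T$ to $T_M$, and a final appeal to Lemma \ref{gap0} in (iv)$\Rightarrow$(i). The only cosmetic difference is that you package the transfer step as the spectral inclusion $\sigma_{{\bf BR}_i}(T_M)\subset\sigma_{{\bf BR}_i}(T)$ and therefore also treat $\lambda=0$ (where the paper only needs, and only uses, $\lambda\ne 0$, for which B-Weylness of $T_N-\lambda$ is immediate from Drazin invertibility); this is harmless and arguably tidier.
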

\begin{proof}
(i)$\Longrightarrow$(ii), (i)$\Longrightarrow$(iii): Suppose that there exists $(M,N)\in Red(T)$ such that $ T_M\in {\bf R}_i$ and $T_N\in (\M)$. 
For $1\le i\le 3$, $T_M$ is Kato, and so $T$  admits a $GK(\M)D$. For $4\le i\le 9$,
from
\cite[Theorem 16.20]{Mu} there exists $(M_1,M_2)\in Red(T_M)$ such that $\dim M_2<\infty$,  $T_{M_1}$ is Kato and $T_{M_2}$ is nilpotent. Then for $N_1=M_2\oplus N$ we have that  $N_1$ is a closed subspace  and  $T_{N_1}=T_{M_2}\oplus T_{N} \in(\M)$ by  Lemma \ref{lema-meromorphic}. So $T$  admits a $GK(\M)D$.

From $ T_M\in {\bf R}_i$   it follows that $0\notin  \sigma_{{\bf R}_i}(T_M)$ and there exists an $\epsilon>0$ such that $D(0,\epsilon)\cap \sigma_{{\bf R}_i}(T_M)=\emptyset$. As   $T_N\in (\M)$, we have that $\sigma_{{\bf R}_i}(T_N)$ is at most countable (with $0$ as its only possible limit point) and since $ \sigma_{{\bf R}_i}(T)\subset \sigma_{{\bf R}_i}(T_M)\cup \sigma_{{\bf R}_i}(T_N)$, we conclude that $0\notin{\rm int}\, \sigma_{{\bf R}_i}(T)$.

From $ T_M\in {\bf R}_i$ it follows that there exists $\epsilon>0$ such that for every $\lambda\in \CC$ satisfying $|\lambda|<\epsilon$ we have that $T_M-\lambda I_M\in {\bf R_i}\subset {\bf BR_i}$. Since $T_N\in (\M)$,  we have that  $T_N-\lambda I_N$ is Drazin  invertible (and hence belongs to ${\bf BR_i}$) for every $\lambda\in \CC$ such that $0<|\lambda|<\epsilon$.  According to Lemma \ref{suma}
 {\rm (i)},{\rm (ii)}, we obtain that $T-\lambda I\in {\bf BR_i}$ for every $\lambda\in\CC$ such that $0<|\lambda|<\epsilon$, and so $0\notin \acc\, \sigma_{{\bf BR}_i}(T)$.

(ii)$\Longrightarrow$(iv): It follows from the inclusion $\sigma_{{\bf BR_i}}(T)\subset\sigma_{{\bf R_i}}(T)$.

  (iii)$\Longrightarrow$(iv): It is obvious.

(iv)$\Longrightarrow$(i): Suppose that $T$ admits a $GK(\M)D$ and
$0\notin{\rm int}\, \sigma
_{{\bf BR}_i}(T)$. Then there exists a decomposition $(M,N)\in Red(T)$ such that $ T_M$ is Kato and $T_N\in (\M)$. From $0\notin{\rm int}\, \sigma
_{{\bf BR}_i}(T)$ it follows that for every $\epsilon>0$ there exists $\lambda\in\CC$ such that $0<|\lambda|<\epsilon$ and $T-\lambda I\in {\bf BR_i}$. Since $T_N\in (\M)$ we have that $T_N-\lambda I_N$ is Drazin invertible (and hence B-Weyl), and therefore from Lemma \ref{suma}  we get that $T_M-\lambda I_M\in {\bf BR_i}$. Thus $0\notin{\rm int}\, \sigma
_{{\bf BR}_i}(T_M)$. As $T_M$ is Kato, from Lemma \ref{gap0}  it follows that $T_M\in {\bf R_i}$.
\end{proof}
\begin{theorem}\label{F1}
The following conditions are equivalent for $T\in L(X)$ and $1\le i\le 9$:

\par \noindent {\rm
(i)} There exists $(M,N)\in Red(T)$ such that $ T_M\in {\bf BR}_i$ and $T_N\in (\M)$ ;

\par \noindent {\rm (ii)}
There exists a bounded projection $P$ on $X$ which commutes with $T$
such that $T+P\in {\bf BR}_i$  and $TP\in (\M)$;

\par \noindent {\rm (iii)}
There exists a bounded projection $P$ on $X$ which commutes with $T$
such that $T(I-P)+P\in {\bf BR}_i$  and $TP\in (\M)$.

\end{theorem}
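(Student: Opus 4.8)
The plan is to recast the three conditions in terms of a single projection and then transfer everything through Lemmas \ref{lema-meromorphic} and \ref{suma}. Fix a bounded projection $P$ on $X$ commuting with $T$, put $N=R(P)$ and $M=N(P)$; then $M$ and $N$ are closed $T$-invariant subspaces with $X=M\oplus N$, so $(M,N)\in Red(T)$, and also $(M,N)\in Red(TP)$. A direct computation with respect to $X=M\oplus N$ gives $TP=0_M\oplus T_N$, $\,T+P=T_M\oplus(T_N+I_N)$ and $\,T(I-P)+P=T_M\oplus I_N$; conversely every $(M,N)\in Red(T)$ arises this way from the projection onto $N$ along $M$. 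Since $0_M$ is quasinilpotent, hence meromorphic, Lemma \ref{lema-meromorphic} shows that $TP\in(\M)$ if and only if $T_N\in(\M)$. Therefore, under the standing hypothesis $T_N\in(\M)$, the theorem reduces to the two equivalences $T_M\in{\bf BR}_i\Longleftrightarrow T_M\oplus(T_N+I_N)\in{\bf BR}_i$ and $T_M\in{\bf BR}_i\Longleftrightarrow T_M\oplus I_N\in{\bf BR}_i$.

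Next I would record that $T_N\in(\M)$ forces $T_N+I_N$ to be Drazin invertible: if $-1\notin\sigma(T_N)$ this operator is invertible, while if $-1\in\sigma(T_N)$ then $-1$, being a nonzero point of the spectrum of the meromorphic operator $T_N$, is a pole of its resolvent, so that $0$ is a pole of the resolvent of $T_N+I_N$; in either case $T_N+I_N\in D(X)$. By \eqref{poziv}, $D(X)={\bf BR}_3$ is the class of B-Browder operators, which is contained in every ${\bf BR}_j$ ($1\le j\le 9$) and consists of B-Weyl operators; hence both $T_N+I_N$ and the invertible operator $I_N$ lie in every ${\bf BR}_i$ and are B-Weyl.

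The two equivalences then follow from Lemma \ref{suma}. For $1\le i\le 6$, Lemma \ref{suma}(i) states that a direct sum lies in ${\bf BR}_i$ if and only if both summands do, and since the complementary summand ($T_N+I_N$, resp.\ $I_N$) lies in ${\bf BR}_i$ this gives both equivalences immediately. For $7\le i\le 9$, the implication $T_M\in{\bf BR}_i\Rightarrow T_M\oplus(T_N+I_N)\in{\bf BR}_i$ (resp.\ $T_M\in{\bf BR}_i\Rightarrow T_M\oplus I_N\in{\bf BR}_i$) is Lemma \ref{suma}(ii), while the converse is Lemma \ref{suma}(iii), applied to the direct sum whose second summand is B-Weyl. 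Assembling the pieces: (i)$\Rightarrow$(ii) takes $P$ to be the projection onto $N$ along $M$, so that $TP=0_M\oplus T_N\in(\M)$ and $T+P=T_M\oplus(T_N+I_N)\in{\bf BR}_i$; (ii)$\Rightarrow$(i) runs the computation backwards with $N=R(P)$, $M=N(P)$, deducing $T_N\in(\M)$ from $TP\in(\M)$ and $T_M\in{\bf BR}_i$ from $T+P\in{\bf BR}_i$; and (i)$\Longleftrightarrow$(iii) is the identical argument with $T+P$ replaced by $T(I-P)+P=T_M\oplus I_N$.

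I do not expect a real obstacle. The only delicate point is the bookkeeping for $7\le i\le 9$, where Lemma \ref{suma} is not a plain equivalence and one must invoke part (iii) together with the B-Weyl property of the complementary summand; the degenerate cases $P=0$ (giving $N=\{0\}$ and $T\in{\bf BR}_i$) and $P=I$ (giving $M=\{0\}$ and $T\in(\M)$) are consistent and cause no trouble.
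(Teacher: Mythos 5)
Your proof is correct and follows essentially the same route as the paper's: the same correspondence between $(M,N)\in Red(T)$ and the projection $P$ with $N=R(P)$, $M=N(P)$, the same identities $TP=0\oplus T_N$, $T+P=T_M\oplus(T_N+I_N)$, $T(I-P)+P=T_M\oplus I_N$, the Drazin invertibility (hence B-Weyl property) of $T_N+I_N$, and Lemmas \ref{lema-meromorphic} and \ref{suma} to transfer membership in $(\M)$ and ${\bf BR}_i$. The only cosmetic difference is that you prove the two equivalences (i)$\Leftrightarrow$(ii) and (i)$\Leftrightarrow$(iii) directly, while the paper argues around the cycle (i)$\Rightarrow$(ii)$\Rightarrow$(iii)$\Rightarrow$(i).
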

\begin{proof} (i)$\Longrightarrow$(ii): Suppose that there exists $(M,N)\in Red(T)$ such that $ T_M\in {\bf BR}_i$ and $T_N\in (\M)$.  Then the projection $P\in L(X)$ such that $N(P)=M$ and $R(P)=N$ satisfies $TP=PT$. Since $T_N\in(\M)$, $TP=0\oplus T_N\in (\M)$ by Lemma \ref{lema-meromorphic}. Again, since $T_N\in(\M)$, it follows that $T_N+ I_N$ is Drazin invertible, i.e. B-Browder, and so $T+P=T_M\oplus (T_N+I_N)\in {\bf BR_i}$ according to Lemma \ref{suma}.

(ii)$\Longrightarrow$(iii): If (ii) holds, then for $M=(I-P)X$ and $N=PX$ from $TP\in (\M)$ it follows that $I+TP=I_M\oplus(I_N+T_N)$ is Drazin invertible, and so $I_N+T_N$ is Drazin invertible  by Lemma \ref{suma} (and hence B-Weyl). Since $T+P=T_M\oplus(I_N+T_N)\in {\bf BR}_i$, from Lemma \ref{suma} we obtain  that $T_M\in {\bf BR}_i$. Again by Lemma \ref{suma} it follows that $T(I-P)+P=T_M\oplus I_N\in  {\bf BR}_i$.


(iii)$\Longrightarrow$(i): If (iii) holds, then the closed subspace $M=N(P)$ and $N=R(P)$ define decomposition $(M,N)\in Red(T)$, and since $TP\in (\M)$ and $TP=0\oplus T_N$, from Lemma \ref{lema-meromorphic} it follows that $T_N\in(\M)$.
 As $T(I-P)+P=T_M\oplus I_N\in {\bf BR_i}$, from Lemma \ref{suma} it follows that   $T_M\in {\bf BR}_i$.
\end{proof}

\begin{theorem}\label{svod}
If $X=H$ is a Hilbert space, or $i=3$ or $6$ or  $9$, then  the  conditions in Theorems \ref{F1} and \ref{F2} are equivalent.
\end{theorem}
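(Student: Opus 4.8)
The plan is to prove that, under the stated hypothesis, condition~(i) of Theorem~\ref{F1} is equivalent to condition~(i) of Theorem~\ref{F2}; since inside each of those theorems all the listed conditions are already equivalent, this at once yields the mutual equivalence of all seven conditions. One implication needs no restriction at all: for every $1\le i\le 9$ and every Banach space $X$ one has ${\bf R}_i\subset{\bf BR}_i$ (for instance $\J(X)\subset LD(X)$, $\Phi_+(X)\subset{\bf B}\Phi_+(X)$, $\W_+(X)\subset{\bf B}\W_+(X)$, and so on, by taking $n=0$ in the relevant ``B''-definition), so if a pair $(M,N)\in Red(T)$ witnesses $T\in{\bf GD(\M)R_i}$, i.e.\ $T_M\in{\bf R}_i$ and $T_N\in(\M)$, then the very same pair witnesses condition~(i) of Theorem~\ref{F1}. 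Hence the ${\bf GD(\M)R_i}$ conditions of Theorem~\ref{F2} always imply those of Theorem~\ref{F1}.

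For the converse, assume $(M,N)\in Red(T)$ with $T_M\in{\bf BR}_i$ and $T_N\in(\M)$. The decisive point is that, precisely when $X$ is a Hilbert space or $i\in\{3,6,9\}$, an operator in ${\bf BR}_i$ splits as a direct sum of an operator in ${\bf R}_i$ and a nilpotent operator. For $i=3$ this is the classical splitting of a Drazin invertible operator, $T_M=T_{M_1}\oplus T_{M_2}$ on $R(T_M^p)\oplus N(T_M^p)$ with $p=\max\{a(T_M),\delta(T_M)\}$, $T_{M_1}$ invertible and $T_{M_2}$ nilpotent; for $i=6$ (respectively $i=9$) it is the known decomposition of a B-Fredholm operator into a Fredholm part and a nilpotent part, where additivity of the index --- a nilpotent operator has index $0$ --- promotes the Fredholm part to a Weyl operator when $T_M$ is B-Weyl; and on a Hilbert space the corresponding decomposition exists for each of the classes ${\bf R}_i$, $1\le i\le 9$, the index of the first summand coinciding with that of $T_M$, which preserves the Weyl-type constraints. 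Granting this, choose $(M_1,M_2)\in Red(T_M)$ with $T_{M_1}\in{\bf R}_i$ and $T_{M_2}$ nilpotent and set $N_1:=M_2\oplus N$. Then $N_1$ is closed by Lemma~\ref{zatvorenost}, $(M_1,N_1)\in Red(T)$ (since $X=M_1\oplus M_2\oplus N$ with all summands closed and $T$-invariant), and $T_{N_1}=T_{M_2}\oplus T_N\in(\M)$ by Lemma~\ref{lema-meromorphic}, because every nilpotent operator is meromorphic. As $T_{M_1}\in{\bf R}_i$, this exhibits $T\in{\bf GD(\M)R_i}$, and combining with Theorems~\ref{F1} and \ref{F2} gives the equivalence of all seven conditions.

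The single genuinely delicate ingredient is the splitting used in the previous paragraph, and it is exactly what dictates the hypothesis: for the one-sided Banach-space classes ${\bf BR}_i$ with $i\in\{1,2,4,5,7,8\}$ an operator need not decompose as a direct sum of an ${\bf R}_i$-operator and a nilpotent one, so in that generality Theorems~\ref{F1} and \ref{F2} are not expected to coincide. All remaining steps are the same direct-sum bookkeeping as in the proof of Theorem~\ref{F2}, with the Kato summand there replaced by the ${\bf R}_i$ summand here.
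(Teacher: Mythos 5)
Your proof is correct and follows essentially the same route as the paper: the paper likewise reduces the statement to showing that condition (i) of Theorem~\ref{F1} implies condition (i) of Theorem~\ref{F2}, invokes Berkani's decomposition results (\cite[Theorem 2.7]{Ber}, \cite[Lemma 4.1]{Ber2}, \cite[Theorem 3.12]{Ber0}) to split $T_M\in{\bf BR}_i$ as $T_{M_1}\oplus T_{M_2}$ with $T_{M_1}\in{\bf R}_i$ and $T_{M_2}$ nilpotent, and then forms $N_1=M_2\oplus N$, using Lemma~\ref{zatvorenost} for closedness and Lemma~\ref{lema-meromorphic} to get $T_{N_1}\in(\M)$. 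The only difference is that you describe the needed splittings case by case rather than citing the specific references.
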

\begin{proof} It is enough to prove that the condition (i) in Theorem \ref{F1} implies the condition (i) in Theorem \ref{F2}.

 Suppose that there exists $(M,N)\in Red(T)$ such that $ T_M\in {\bf BR}_i$ and $T_N\in (\M)$ and suppose that either $X=H$, or, $i=3$ or $6$ or $9$ (and $X$ is a Banach space). From \cite[Theorem 2.7]{Ber}, \cite[Lemma 4.1]{Ber2} and  \cite[Theorem 3.12]{Ber0} it follows that there exists $(M_1,M_2)\in Red(T_M)$ such that $T_{M_1}\in {\bf R_i}$ and $T_{M_2}$ is nilpotent. Define $N_1=M_2\oplus N$. Then $N_1$ is a closed subspace by Lemma \ref{zatvorenost} and  $T_{N_1}=T_{N}\oplus T_{M_2}\in(\M)$ by  Lemma \ref{lema-meromorphic}.
\end{proof}

\begin{definition} An operator $T\in L(X)$ is {\it meromorphic quasi-polar} if there exists a bounded projection $Q$ satisfying
\begin{equation}\label{mer-qp}
  TQ=QT,\ T(I-Q)\in (\M),\ Q\in(L(X)T)\cap (TL(X)).
\end{equation}

\end{definition}
\begin{theorem}\label{glavna}
The following conditions are mutually equivalent for operators $T\in L(X)$:

\par\noindent {\rm (i)}
 There exists $(M,N)\in Red(T)$ such that $T_M$ is invertible and $T_N\in (\M)$;

\par\noindent {\rm (ii)} $T$ admits a $GK(\M)D$ and $0\notin {\rm int}\, \sigma (T)$;

\par\noindent {\rm (iii)}
 $T$ admits a $GK(\M)D$ and,  $T$ and  $T^\prime$ have SVEP at 0;

\par\noindent {\rm (iv)} $T$ is generalized Drazin-meromorphic
invertible;


\par\noindent {\rm (v)} $T$ is meromorphic quasi-polar;

\par\noindent {\rm (vi)} There exists a projection $P\in L(X)$ such that $P$ commutes with $T$, $TP\in (\M)$ and $T+P$ is B-Browder;

\par\noindent {\rm (vii)} There exists a projection $P\in L(X)$ which commutes with $T$  and such that $TP\in (\M)$ and $T(I-P)+P$ is B-Browder;

\par\noindent {\rm (viii)} There exists $(M,N)\in Red(T)$ such that $T_M$ is  B-Browder and $T_N\in (\M)$;

\par\noindent {\rm (ix)} $T$ admits a $GK(\M)D$ and $0\notin\acc\, \sigma_{\bf BB}(T)$;

\par\noindent {\rm (x)} $T$ admits a $GK(\M)D$ and $0\notin{\rm int}\, \sigma_{\bf BB}(T)$.
\end{theorem}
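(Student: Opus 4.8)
The plan is to specialise $i=3$ in the results already proved and then graft on the three genuinely new items (iii), (iv), (v).

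\emph{The backbone (equivalence of (i), (ii), (vi)--(x)).} By \eqref{poziv}, ${\bf BR}_3(X)={\bf B\B}(X)=D(X)$ is the class of B-Browder operators, while ${\bf R}_3(X)=L(X)^{-1}$; hence $\sigma_{{\bf R}_3}(T)=\sigma(T)$ and $\sigma_{{\bf BR}_3}(T)=\sigma_{\bf BB}(T)$. Under these identifications, statements (i), (ii), (ix), (x) are verbatim the four conditions of Theorem \ref{F2} for $i=3$, so they are mutually equivalent; statements (viii), (vi), (vii) are verbatim conditions (i), (ii), (iii) of Theorem \ref{F1} for $i=3$, so they too are mutually equivalent; and, since $i=3$ is admissible in Theorem \ref{svod}, the conditions of Theorems \ref{F1} and \ref{F2} coincide, so the two blocks merge (for instance (i)$\iff$(viii)).

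\emph{Items (iv) and (v).} I would run the cycle (i)$\Rightarrow$(v)$\Rightarrow$(iv)$\Rightarrow$(i). From (i), writing $T=T_M\oplus T_N$ on $X=M\oplus N$ with $T_M$ invertible, the projection $Q=I_M\oplus 0$ commutes with $T$, satisfies $T(I-Q)=0\oplus T_N\in(\M)$ by Lemma \ref{lema-meromorphic}, and equals $(T_M^{-1}\oplus 0)T=T(T_M^{-1}\oplus 0)$, so it lies in $(L(X)T)\cap(TL(X))$; hence (v). Given (v) with $Q=AT=TB$ a projection commuting with $T$ and $T(I-Q)\in(\M)$: then $(R(Q),N(Q))\in Red(T)$, $T_{N(Q)}\in(\M)$ by Lemma \ref{lema-meromorphic} (as $T(I-Q)=0\oplus T_{N(Q)}$), and $T_{R(Q)}$ is invertible --- surjectivity comes from $y=Qy=TBy$ on splitting $By$ along $R(Q)\oplus N(Q)$, injectivity from $x=Qx=ATx$ for $x\in R(Q)$; then $S:=(T_{R(Q)})^{-1}\oplus 0$ gives $TS=ST=Q$, $STS=S$ and $TST-T=-T(I-Q)\in(\M)$, i.e. (iv). Finally, from (iv) with inverse $S$, the identity $STS=S$ forces $(TS)^2=T(STS)=TS$, so $Q:=TS=ST$ is a projection commuting with $T$; then $(R(Q),N(Q))\in Red(T)$, $T(I-Q)=-(TST-T)\in(\M)$ yields $T_{N(Q)}\in(\M)$, and $T_{R(Q)}$ is invertible by the same splitting argument --- this is (i).

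\emph{Item (iii).} For (iii)$\Rightarrow$(i): choose $(M,N)\in Red(T)$ with $T_M$ Kato and $T_N\in(\M)$ realising the $GK(\M)D$; SVEP at $0$ passes to $T_M$ (restriction to a $T$-invariant summand) and, using $(M,N)\in Red(T)\iff(N^\bot,M^\bot)\in Red(T')$, to $(T_M)'$, so $T_M$ is invertible by Lemma \ref{Kato-SVEP}; hence (i). For (ii)$\Rightarrow$(iii): the backbone gives (i), so $T=T_M\oplus T_N$ with $T_M$ invertible (hence Kato, so $T$ admits a $GK(\M)D$) and $T_N\in(\M)$; an invertible operator, and a meromorphic operator $R$ (whose adjoint is again meromorphic), have SVEP at $0$ --- for $R$, the nonzero eigenvalues are isolated, so $R-\lambda$ is injective off an at most countable subset of a disc about $0$, forcing any local analytic solution of $(R-\lambda)f(\lambda)=0$ to vanish on a dense set and hence identically; since SVEP at a point is stable under finite direct sums and summands, $T$ and $T'$ have SVEP at $0$, i.e. (iii). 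This closes the web of equivalences.

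\emph{Main obstacle.} The real work sits in the cycle (i)$\Rightarrow$(v)$\Rightarrow$(iv)$\Rightarrow$(i): checking that $TS$ is genuinely idempotent and, above all, that the restriction $T_{R(Q)}$ is invertible in the step (v)$\Rightarrow$(iv) (the surjectivity argument from $Q=TB$). The remainder is bookkeeping --- matching conditions to the $i=3$ instances of Theorems \ref{F1}, \ref{F2}, \ref{svod}, the hereditary behaviour of SVEP under direct sums and duality, and the elementary fact that meromorphic operators and their adjoints have SVEP at $0$.
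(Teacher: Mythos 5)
Your proposal is correct, and its backbone — identifying (i), (ii), (ix), (x) with Theorem \ref{F2} for $i=3$, (vi), (vii), (viii) with Theorem \ref{F1} for $i=3$, and merging the two blocks via Theorem \ref{svod} and \eqref{poziv} — is exactly what the paper does. Where you diverge is in how the remaining items are stitched in. The paper proves the single chain (ii)$\Rightarrow$(iii)$\Rightarrow$(iv)$\Rightarrow$(v)$\Rightarrow$(vi): for (ii)$\Rightarrow$(iii) it observes directly that $0\notin\inter\,\sigma(T)$ forces $0\notin\sigma(T)$ or $0\in\partial\sigma(T)$, and SVEP at $0$ for $T$ and $T^\prime$ is automatic in either case; for (v)$\Rightarrow$(vi) it writes $U$ and $V$ from $I-P=UT=TV$ as $2\times 2$ operator matrices, shows $UTV+P=T_M^{-1}\oplus I_N$ is invertible with inverse $T(I-P)+P$, and factors $T+P=(I+TP)(UTV+P)^{-1}$ to get a Drazin invertible, hence B-Browder, operator. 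You instead close the loop by the cycle (i)$\Rightarrow$(v)$\Rightarrow$(iv)$\Rightarrow$(i), verifying invertibility of $T_{R(Q)}$ directly from $Q\in(L(X)T)\cap(TL(X))$; this reaches the backbone at (i) and so bypasses the matrix computation entirely, which is arguably cleaner. Your (ii)$\Rightarrow$(iii) is also routed differently (through (i) and the SVEP of invertible and meromorphic operators and their adjoints) but is equally valid, as is your use of Lemma \ref{Kato-SVEP} for (iii)$\Rightarrow$(i), where the paper instead argues injectivity of $T_M$ and $T^\prime_{N^\bot}$ by hand and cites the proof of \cite[Lemma 3.13]{Ai} for surjectivity. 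Both arguments rest on the same two ideas — the quasi-polar projection $Q=TS$ and the invertibility of the restriction of $T$ to $R(Q)$ — so the difference is one of routing and economy rather than substance.
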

\begin{proof} According to   Theorem \ref{svod}, Theorem \ref{F2}, Theorem \ref{F1} and \eqref{poziv} it is sufficient to prove that
 (ii)$\Longrightarrow$(iii)$\Longrightarrow$ (iv)$\Longrightarrow$(v)$\Longrightarrow$(vi).
%

If (ii) holds, then $0\notin {\rm int}\, \sigma (T)$ implies that either $0\notin\sigma (T)$ or $0\in\partial\sigma (T)$. In both cases  $T$ and $T^\prime$ have SVEP at 0; hence (ii) implies (iii).

If (iii) holds, then $(M,N)\in Red(T))$, $T_N\in (\M)$,  $T_M$ is Kato and hence, $T^\prime_{N^\bot}$ is Kato by Lemma \ref{help}.  Since $T$ and $T^\prime$ have SVEP at 0, it follows that $T_M$ and $T^\prime_{N^\bot}$ also  have SVEP at 0, which implies that $T_M$ and $T^\prime_{N^\bot}$ are injective.  As in  the proof of \cite[Lemma 3.13]{Ai} it can be proved  that $T_M$ is surjective, and so $T_M$ is invertible. The operator $S=T_M^{-1}\oplus 0\in L(M\oplus N)$ satisfies
$$
ST=TS,\ STS=S,\ {\rm and}\ TST-T=0\oplus(-T_N)\in (\M).
$$
Thus (iii) implies (iv).

To prove (iv) implies (v), (assume (iv) and) define the projector $Q\in L(X)$ by setting $ST(=TS)=Q$. Then
\begin{equation}\label{proj}
 QT=TQ,\ Q\in TL(X)\cap L(X)T\ {\rm and}\ T(I-Q)\in (\M),
\end{equation}
i.e., $T$ is meromorphic quasi-polar.

Assume now that (v) holds. Then there exists a projector $Q\in L(X)$ such that (\ref{proj}) holds. Set $P=I-Q$. Then $TP\in (\M)$ and  for $N=P(X)$ and $M=(I-P)(X)$ we have
$$
PT=TP,\ T_N\in (\M)\ {\rm and}\ I-P=UT=TV
$$
for some $U,V\in L(X)$. Let $U,V\in B(M\oplus N)$have the ($2\times 2$ matrix) representations $U=[U_{ij}]_{i,j=1}^2$ and $V=[V_{ij}]_{i,j=1}^2$. Then
$$
\bmatrix U_{11}&U_{12}\\U_{21}&U_{22}\endbmatrix \bmatrix T_M&0\\0&T_N\endbmatrix=\bmatrix T_M&0\\0&T_N\endbmatrix\bmatrix V_{11}&V_{12}\\V_{21}&V_{22}\endbmatrix =\bmatrix I_M&0\\0&0\endbmatrix:(M\oplus N)\to (M\oplus N)
$$
and it follows from a straighforward calculation that ($T_M$ is invertible, $U_{21}=0=V_{12}$, $U_{12}T_N=U_{22}T_N=0=T_NV_{21}=T_NV_{22}$, and hence that) $UTV+P=T_M^{-1}\oplus I_N$ is invertible with
$(UTV+P)^{-1}=T_M\oplus I_N=T(I-P)+P$. Since $TP\in (\M)$, $I+TP$ is Drazin invertible and hence,
$$
T+P=(I+TP)(UTV+P)^{-1}=(UTV+P)^{-1}(I+TP)
$$
is Drazin invertible, i.e. B-Browder.
\end{proof}

The following theorems can be obtained similarly.

\begin{theorem}\label{glavna1}
The following conditions are mutually equivalent for operators $T\in L(X)$:

\par\noindent {\rm (i)} There exists $(M,N)\in Red(T)$ such that $T_M$ is bounded below and $T_N\in (\M)$;

\par\noindent {\rm (ii)} $T$ admits a $GK(\M)D$ and $0\notin {\rm int}\, \sigma_{ap} (T)$;

\par\noindent {\rm (iii)} $T$ admits a $GK(\M)D$ and  $T$ has SVEP at 0;

\par\noindent {\rm (iv)} $T$ admits a $GK(\M)D$ and $0\notin\acc\, \sigma_{\bf BB_+}(T)$;

\par\noindent {\rm (v)} $T$ admits a $GK(\M)D$ and $0\notin{\rm int}\, \sigma_{\bf BB_+}(T)$.
\end{theorem}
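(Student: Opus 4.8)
The plan is to obtain the bulk of the equivalences from Theorem~\ref{F2} and then to splice in the SVEP condition (iii). Specialising Theorem~\ref{F2} to $i=1$, we have ${\bf R}_1=\J(X)$, hence $\sigma_{{\bf R}_1}(T)=\sigma_{ap}(T)$, while by \eqref{poziv} ${\bf BR}_1=LD(X)={\bf B\B_+}(X)$, hence $\sigma_{{\bf BR}_1}(T)=\sigma_{\bf BB_+}(T)$. With these identifications, conditions (i), (ii), (iv) and (v) of the present theorem are exactly conditions (i), (ii), (iii) and (iv) of Theorem~\ref{F2}, so they are mutually equivalent. It therefore suffices to prove (i)$\Longleftrightarrow$(iii); this runs parallel to the way the SVEP condition is handled in the proof of Theorem~\ref{glavna}, except that only one summand is involved.

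For (i)$\Longrightarrow$(iii): let $(M,N)\in Red(T)$ with $T_M$ bounded below and $T_N\in(\M)$. Being bounded below, $T_M$ has closed range and trivial kernel, hence is Kato, so $(M,N)$ is already a $GK(\M)D$ for $T$. An analytic solution of $(T-\lambda)f(\lambda)=0$ on a disc splits into its $M$- and $N$-components, so $T$ has SVEP at $0$ if and only if $T_M$ and $T_N$ both have SVEP at $0$. Now $T_M$ is injective, hence has SVEP at $0$, and I would argue that $T_N\in(\M)$ also has SVEP at $0$: every nonzero point of $\sigma(T_N)$ is a pole of the resolvent, hence isolated in $\sigma(T_N)$, so $\acc\,\sigma(T_N)\subseteq\{0\}$; if $0\notin\acc\,\sigma(T_N)$ then SVEP at $0$ follows from \eqref{w1}, while if $0\in\acc\,\sigma(T_N)$ one takes any analytic $f$ on a disc $\D_0$ about $0$ with $(T_N-\lambda)f(\lambda)=0$, notes that the set of those $\lambda\in\D_0$ which are neither $0$ nor a pole of the resolvent is open, connected and dense in $\D_0$, observes that $f$ vanishes there since $T_N-\lambda$ is invertible, and concludes $f\equiv0$ by the identity theorem. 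Hence $T$ has SVEP at $0$ and (iii) holds.

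For (iii)$\Longrightarrow$(i): choose a $GK(\M)D$ $(M,N)\in Red(T)$, so $T_M$ is Kato and $T_N\in(\M)$. Since $T=T_M\oplus T_N$ has SVEP at $0$, so does $T_M$; being Kato with SVEP at $0$, $T_M$ is injective by \cite[Theorem 2.49]{Ai} (this is the injectivity step underlying Lemma~\ref{Kato-SVEP}), and as $R(T_M)$ is closed, $T_M$ is bounded below. Thus $(M,N)$ witnesses (i), which completes the argument.

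The one step that genuinely uses the structure of meromorphic operators --- and where I would be most careful --- is the SVEP of $T_N\in(\M)$ at $0$: implication \eqref{w1} is unavailable there because $0$ may be an accumulation point of $\sigma(T_N)$, and one has to invoke that the nonzero spectrum of a meromorphic operator is a discrete set of poles so that the analytic-continuation argument still goes through. The remaining ingredients --- the identifications coming from \eqref{poziv}, the behaviour of SVEP under direct sums, and ``Kato plus SVEP at $0$ implies injective'' --- are standard.
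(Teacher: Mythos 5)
Your proposal is correct and takes essentially the same route the paper intends: conditions (i), (ii), (iv), (v) are the $i=1$ case of Theorem~\ref{F2} (using \eqref{poziv} to identify ${\bf BR}_1$ with ${\bf B\B_+}$), and the SVEP condition (iii) is spliced in via the fact that a Kato operator with SVEP at $0$ is injective (\cite[Theorem 2.49]{Ai}), exactly as in the paper's proof of Theorem~\ref{glavna}. One small repair: ``$T_M$ is injective, hence has SVEP at $0$'' is not a valid implication in general; what you need, and have, is that $T_M$ is bounded below, so that $0\notin\sigma_{ap}(T_M)$ and \eqref{w2} gives SVEP at $0$.
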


\begin{theorem}\label{glavna2}
The following conditions are mutually equivalent for operators $T\in L(X)$:

\par\noindent {\rm (i)} There exists $(M,N)\in Red(T)$ such that $T_M$ is surjective  and $T_N\in (\M)$;

\par\noindent {\rm (ii)} $T$ admits a $GK(\M)D$ and $0\notin {\rm int}\, \sigma_{su} (T)$;

\par\noindent {\rm (iii)} $T$ admits a $GK(\M)D$ and    $T^\prime$ has SVEP at 0;

\par\noindent {\rm (iv)} $T$ admits a $GK(\M)D$ and $0\notin\acc\, \sigma_{\bf BB_-}(T)$;

\par\noindent {\rm (v)} $T$ admits a $GK(\M)D$ and $0\notin{\rm int}\, \sigma_{\bf BB_-}(T)$.
\end{theorem}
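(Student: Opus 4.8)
The plan is to peel off the bulk of the statement as the case $i=2$ of Theorem \ref{F2}, and then to splice condition (iii) into the resulting chain of equivalences. Since ${\bf R}_2=\S(X)$ we have $\sigma_{{\bf R}_2}(T)=\sigma_{su}(T)$, and by \eqref{poziv} we have ${\bf BR}_2(X)={\bf B\B_-}(X)=RD(X)$, so $\sigma_{{\bf BR}_2}(T)=\sigma_{\bf BB_-}(T)$. Reading Theorem \ref{F2} with $i=2$ then gives at once the mutual equivalence of the present conditions (i), (ii), (iv) and (v): these are, respectively, conditions (i), (ii), (iii) and (iv) of Theorem \ref{F2}. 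Hence everything reduces to inserting (iii) into the circle, which I would do by proving (i)$\Longrightarrow$(iii) and (iii)$\Longrightarrow$(i).

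For (i)$\Longrightarrow$(iii): suppose $(M,N)\in Red(T)$ with $T_M$ surjective and $T_N\in(\M)$. Then $T_M$ is Kato, so $T$ admits a $GK(\M)D(M,N)$, and by Lemma \ref{help} the adjoint $T^\prime$ admits a $GK(\M)D(N^\bot,M^\bot)$; thus $T^\prime_{N^\bot}$ is Kato and $T^\prime_{M^\bot}\in(\M)$. Under the canonical identification $N^\bot\cong M^\prime$ one has $T^\prime_{N^\bot}=(T_M)^\prime$, which is bounded below because $T_M$ is surjective; hence $T^\prime_{N^\bot}$ has SVEP at $0$ (cf. \eqref{w2}). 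On the other hand $T^\prime_{M^\bot}\in(\M)$ has at most countable spectrum, so it has SVEP, in particular at $0$. Since SVEP of a direct sum at a point amounts to SVEP of both summands there, $T^\prime=T^\prime_{N^\bot}\oplus T^\prime_{M^\bot}$ has SVEP at $0$; together with the $GK(\M)D$ already produced, this is (iii).

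For (iii)$\Longrightarrow$(i): suppose $T$ admits a $GK(\M)D(M,N)$ and $T^\prime$ has SVEP at $0$. By Lemma \ref{help}, $T^\prime=T^\prime_{N^\bot}\oplus T^\prime_{M^\bot}$ with $T^\prime_{N^\bot}$ Kato; as a direct summand of $T^\prime$, the operator $T^\prime_{N^\bot}$ has SVEP at $0$. A Kato operator with SVEP at $0$ is injective (cf. \cite[Theorem 2.49]{Ai}), so $T^\prime_{N^\bot}=(T_M)^\prime$ is injective, i.e.\ $R(T_M)$ is dense in $M$; but $T_M$, being Kato, has closed range, so $R(T_M)=M$ and $T_M$ is surjective. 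With $T_N\in(\M)$, this is exactly (i).

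The routine part is the reduction to Theorem \ref{F2}; the step requiring care is the duality bridge to condition (iii) — converting the hypothesis on $T^\prime$ into surjectivity of $T_M$ via the identification $T^\prime_{N^\bot}=(T_M)^\prime$ together with Lemma \ref{help}, and then upgrading injectivity of $(T_M)^\prime$ to surjectivity of $T_M$ by using that $R(T_M)$ is closed (as $T_M$ is Kato). The companion Theorems \ref{glavna} and \ref{glavna1} follow along exactly the same lines, with $i=3$, resp.\ $i=1$, the only difference being that the bounded-below case needs no passage to the adjoint while the invertible case requires handling $T$ and $T^\prime$ simultaneously.
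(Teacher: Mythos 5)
Your proposal is correct and follows essentially the same route the paper intends: the paper gives no separate proof of Theorem \ref{glavna2}, stating only that it is obtained similarly to Theorem \ref{glavna}, whose proof likewise combines Theorem \ref{F2} (here with $i=2$, using \eqref{poziv} to identify ${\bf BR}_2$ with ${\bf B\B_-}$) with Lemma \ref{help} and the fact that a Kato operator whose adjoint has SVEP at $0$ is surjective (\cite[Theorem 2.49]{Ai}, cf.\ Lemma \ref{Kato-SVEP}). Your explicit duality bridge $T^\prime_{N^\bot}=(T_M)^\prime$, used to splice condition (iii) into the chain, is exactly the mechanism appearing in the paper's proof of (iii)$\Longrightarrow$(iv) of Theorem \ref{glavna}.
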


We remark that if $T\in L(X)$ is Riesz with infinite spectrum, then  $T$ is generalized Drazin-meromorphic  invertible, $\sigma(T)=\sigma_{ap}(T)=\sigma_{su}(T)$,  $0\in{\rm acc}\, \sigma_{ap}(T)={\rm acc}\, \sigma_{su}(T)$ and $0\notin{\rm int}\, \sigma_{ap }(T)={\rm int}\, \sigma_{su }(T)$. Therefore, the condition that $0\notin{\rm int}\, \sigma_{ap }(T)$    ($0\notin{\rm int}\, \sigma_{su}(T)$) in the statement (ii) in Theorem \ref{glavna1} (Theorem \ref{glavna2}) can not be replaced with the stronger condition that $0\notin{\rm acc}\, \sigma_{ap}(T)$ ($0\notin{\rm acc}\, \sigma_{su}(T)$).

\bigskip

P. Aiena and E. Rosas \cite[Theorems 2.2 and 2.5]{aienarosas}   characterized  the SVEP at a point $\lambda_0$ in the case that $ \lambda_0-T$ is of Kato type. Q.  Jiang  and H. Zhong \cite[Theorems 3.5 and 3.9]{Kinezi} gave further characterizations of the SVEP at $\lambda_0$ in the case that $ \lambda_0-T$ admits a generalized Kato decomposition. We gave characterizations for the case that $ \lambda_0-T$ admits a generalized Kato-meromorphic decomposition.

\begin{corollary}  Let $T\in L(X)$ and let $\lambda_0-T$ admit a $GK(\M)D$. Then the  following statements are equivalent:

\par\noindent {\rm (i)} $T$ has the SVEP at $\lambda_0$;

\par\noindent {\rm (ii)} $\lambda_0$ is not an interior point of $\sigma_{ap} (T)$;

\par\noindent {\rm (iii)} $\sigma_{\bf BB_+}(T)$ does not cluster at $\lambda_0$.
\end{corollary}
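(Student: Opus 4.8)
The plan is to deduce this corollary directly from Theorem~\ref{glavna1}, since that theorem already packages together exactly the equivalences we need, specialized to the shifted operator $\lambda_0 - T$. First I would observe that the SVEP of $T$ at $\lambda_0$ is by definition the same as the SVEP of $\lambda_0 - T$ at $0$, and likewise $\lambda_0 \in \inter\, \sigma_{ap}(T)$ if and only if $0 \in \inter\, \sigma_{ap}(\lambda_0 - T)$, because $\sigma_{ap}(\lambda_0 - T) = \lambda_0 - \sigma_{ap}(T)$ is merely a translate (and a translation is a homeomorphism of $\CC$, so it preserves interiors and accumulation points). The same translation argument applies to $\sigma_{\bf BB_+}$: one has $\sigma_{\bf BB_+}(\lambda_0 - T) = \lambda_0 - \sigma_{\bf BB_+}(T)$, so $\sigma_{\bf BB_+}(T)$ clusters at $\lambda_0$ exactly when $\sigma_{\bf BB_+}(\lambda_0 - T)$ clusters at $0$, i.e.\ when $0 \in \acc\, \sigma_{\bf BB_+}(\lambda_0 - T)$.

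Next I would apply Theorem~\ref{glavna1} to the operator $S := \lambda_0 - T$. The hypothesis of the corollary is precisely that $S$ admits a $GK(\M)D$, which is the standing assumption in the statements (ii)--(v) of that theorem. Hence, for $S$, the conditions ``$S$ has SVEP at $0$'', ``$0 \notin \inter\, \sigma_{ap}(S)$'', and ``$0 \notin \acc\, \sigma_{\bf BB_+}(S)$'' (the negations of which are statements (iii), (ii), (iv) of Theorem~\ref{glavna1} respectively, all under the $GK(\M)D$ hypothesis) are mutually equivalent. Translating each of these back to $T$ and $\lambda_0$ via the identities collected in the previous paragraph yields exactly the equivalence of (i), (ii), (iii) of the corollary.

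The only point requiring a word of care is making sure the $GK(\M)D$ hypothesis is correctly transported: ``$\lambda_0 - T$ admits a $GK(\M)D$'' is literally the phrase ``$S$ admits a $GK(\M)D$'' with $S = \lambda_0 - T$, so there is nothing to check there, and I would simply note it. I do not expect a genuine obstacle; the content of the corollary is entirely contained in Theorem~\ref{glavna1}, and the proof is the routine ``replace $T$ by $\lambda_0 - T$ and invoke translation-invariance of the relevant spectra and of the SVEP'' reduction. If anything, the mild subtlety is cosmetic: one should state which three of the five equivalent conditions of Theorem~\ref{glavna1} are being used, namely (ii), (iii), (iv), and remark that (i) and (v) are simply not needed here.

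\begin{proof}
Put $S=\lambda_0-T$. By hypothesis $S$ admits a $GK(\M)D$, so Theorem~\ref{glavna1} applies to $S$: under this hypothesis the conditions $0\notin\inter\,\sigma_{ap}(S)$, ``$S$ has SVEP at $0$'', and $0\notin\acc\,\sigma_{\bf BB_+}(S)$ are mutually equivalent (these are statements (ii), (iii), (iv) of Theorem~\ref{glavna1}). Now $S$ has SVEP at $0$ if and only if $T$ has SVEP at $\lambda_0$. Moreover $\sigma_{ap}(S)=\lambda_0-\sigma_{ap}(T)$ and $\sigma_{\bf BB_+}(S)=\lambda_0-\sigma_{\bf BB_+}(T)$, and since $z\mapsto \lambda_0-z$ is a homeomorphism of $\CC$ it preserves interiors and sets of accumulation points; hence $0\notin\inter\,\sigma_{ap}(S)$ is equivalent to $\lambda_0\notin\inter\,\sigma_{ap}(T)$, and $0\notin\acc\,\sigma_{\bf BB_+}(S)$ is equivalent to the statement that $\sigma_{\bf BB_+}(T)$ does not cluster at $\lambda_0$. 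Substituting these equivalences into the three equivalent conditions for $S$ yields the equivalence of (i), (ii) and (iii).
\end{proof}
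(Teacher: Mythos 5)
Your proof is correct and is essentially the paper's own argument: the paper proves this corollary by citing the equivalences (ii)$\Longleftrightarrow$(iii)$\Longleftrightarrow$(iv) of Theorem~\ref{glavna1}, which is exactly the reduction you carry out (you merely make the routine translation step $T\mapsto\lambda_0-T$ explicit). Nothing further is needed.
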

\begin{proof} It follows from the equivalences (ii)$\Longleftrightarrow$(iii)$\Longleftrightarrow$(iv) in Theorem \ref{glavna1}.
\end{proof}

\begin{corollary}
 Let $T\in L(X)$ and let $\lambda_0-T$ admit a $GK(\M)D$. Then the  following statements are equivalent:

\par\noindent {\rm (i)} $T^\prime$ has the SVEP at $\lambda_0$;

\par\noindent {\rm (ii)} $\lambda_0$ is not an interior point of $\sigma_{su} (T)$;

\par\noindent {\rm (iii)} $\sigma_{\bf BB_-}(T)$ does not cluster at $\lambda_0$.
\end{corollary}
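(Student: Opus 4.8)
The plan is to read the corollary off Theorem \ref{glavna2} applied to the operator $\lambda_0 I-T$ in place of $T$, exactly as the preceding corollary was deduced from Theorem \ref{glavna1}. The standing hypothesis is precisely that $\lambda_0 I-T$ admits a $GK(\M)D$, which is the first half of each of the conditions (ii), (iii), (iv) of Theorem \ref{glavna2}; so once the ``second halves'' are translated back to $T$, the chain (ii)$\Longleftrightarrow$(iii)$\Longleftrightarrow$(iv) of that theorem becomes exactly the mutual equivalence of (i), (ii), (iii) of the corollary.

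First I would record the elementary translation identities. Since $(\lambda_0 I-T)^\prime=\lambda_0 I^\prime-T^\prime$ and SVEP is invariant under such an affine change of variable, $(\lambda_0 I-T)^\prime$ has SVEP at $0$ if and only if $T^\prime$ has SVEP at $\lambda_0$. Next, $\sigma_{su}(\lambda_0 I-T)=\lambda_0-\sigma_{su}(T)$ and $\sigma_{\bf BB_-}(\lambda_0 I-T)=\lambda_0-\sigma_{\bf BB_-}(T)$, the latter being immediate from the fact that $S$ is lower semi-B-Browder if and only if $cS$ is, for any nonzero scalar $c$ (clear from the definition via $RD$ and \eqref{poziv}, since $\delta(cS)=\delta(S)$ and $R((cS)^n)=R(S^n)$). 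Because $\mu\mapsto\lambda_0-\mu$ is a homeomorphism of $\CC$, it carries interior points to interior points and accumulation points to accumulation points, so $0\notin\inter\,\sigma_{su}(\lambda_0 I-T)$ iff $\lambda_0\notin\inter\,\sigma_{su}(T)$, and $0\notin\acc\,\sigma_{\bf BB_-}(\lambda_0 I-T)$ iff $\sigma_{\bf BB_-}(T)$ does not cluster at $\lambda_0$.

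Putting these together: under the hypothesis that $\lambda_0 I-T$ admits a $GK(\M)D$, condition (iii) of Theorem \ref{glavna2} for $\lambda_0 I-T$ reads ``$T^\prime$ has SVEP at $\lambda_0$'', i.e. statement (i) of the corollary; condition (ii) reads ``$\lambda_0\notin\inter\,\sigma_{su}(T)$'', i.e. statement (ii); and condition (iv) reads ``$\sigma_{\bf BB_-}(T)$ does not cluster at $\lambda_0$'', i.e. statement (iii). Hence all three are equivalent by Theorem \ref{glavna2}.

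There is essentially no genuine obstacle here: the whole content is the dualization of the previous corollary, and the only point requiring care is to check that the relevant spectra and SVEP behave correctly under the substitution of $\lambda_0 I-T$ for $T$. Alternatively one could rerun the dual of the proof of Theorem \ref{glavna}, combining Theorem \ref{glavna2} with Lemma \ref{help}, but invoking Theorem \ref{glavna2} directly is the shortest route.
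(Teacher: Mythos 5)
Your proposal is correct and matches the paper's own (one-line) proof, which likewise just invokes Theorem \ref{glavna2}; the translation identities you spell out for SVEP, $\sigma_{su}$ and $\sigma_{\bf BB_-}$ under the substitution $T\mapsto\lambda_0 I-T$ are exactly the routine details the paper leaves implicit.
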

\begin{proof}
It follows from  Theorem \ref{glavna2}.
\end{proof}
\begin{theorem} The  following statements are equivalent for operator $T\in L(X)$:

\par\noindent {\rm (i)} $T=T_M\oplus T_N$, where $T_M$ is invertible, $T_N\in (\M)$ and $\sigma(T_N)$ is infinite;

\par\noindent {\rm (ii)} $T$ admits a $GK(\M)D$ such that there exists an infinite sequence of poles of resolvent of $T$ converging to $0$.

\end{theorem}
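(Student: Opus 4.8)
The plan is to read both implications off Theorem~\ref{glavna}, whose equivalence (i)$\Longleftrightarrow$(ii) already characterises the operators that split as $T=T_M\oplus T_N$ with $T_M$ invertible and $T_N\in(\M)$ as exactly those admitting a $GK(\M)D$ with $0\notin\inter\,\sigma(T)$. So the real work is only to trade, in the presence of a $GK(\M)D$, the pair of conditions ``$0\notin\inter\,\sigma(T)$ and $\sigma(T_N)$ infinite'' against the single condition ``there is an infinite sequence of poles of the resolvent of $T$ converging to $0$''. Throughout I read the latter phrase, as is standard, as the existence of a sequence $(\lambda_k)$ of pairwise distinct poles of the resolvent of $T$ with $\lambda_k\to 0$.

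For (i)$\Longrightarrow$(ii) I would argue as follows. Since $T_M$ is invertible it is Kato, so $(M,N)$ is already a $GK(\M)D$ for $T$. As $T_N\in(\M)$, every nonzero point of $\sigma(T_N)$ is a pole of the resolvent of $T_N$, hence an isolated point of $\sigma(T_N)$; because $\sigma(T_N)$ is an infinite compact subset of $\CC$ it must have an accumulation point, and the preceding sentence forces that accumulation point to be $0$. Thus there is a sequence $(\lambda_k)$ of pairwise distinct nonzero poles of the resolvent of $T_N$ with $\lambda_k\to 0$. Since $0\notin\sigma(T_M)$ and $\sigma(T_M)$ is closed, $\lambda_k\notin\sigma(T_M)$ for all large $k$; for such $k$ we have $T-\lambda_k I=(T_M-\lambda_k I_M)\oplus(T_N-\lambda_k I_N)$, where the first summand is invertible and $\lambda_k$ is a pole of the resolvent of the second, so $\lambda_k$ is a pole of the resolvent of $T$. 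This produces the required sequence and gives (ii).

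For (ii)$\Longrightarrow$(i) I would start from a sequence $(\lambda_k)$ of pairwise distinct poles of the resolvent of $T$ with $\lambda_k\to 0$. Each $\lambda_k$, being a pole, is an isolated point of $\sigma(T)$ and therefore lies on $\partial\sigma(T)$; as $\partial\sigma(T)$ is closed, $0\in\partial\sigma(T)$, and in particular $0\notin\inter\,\sigma(T)$. Since $T$ also admits a $GK(\M)D$, Theorem~\ref{glavna} provides $(M,N)\in Red(T)$ with $T_M$ invertible and $T_N\in(\M)$. It remains to see that $\sigma(T_N)$ is infinite: from $\sigma(T)=\sigma(T_M)\cup\sigma(T_N)$ together with $0\notin\sigma(T_M)$ and $\sigma(T_M)$ closed, we get $\lambda_k\notin\sigma(T_M)$, hence $\lambda_k\in\sigma(T_N)$, for all large $k$; as the $\lambda_k$ are infinitely many distinct points, $\sigma(T_N)$ is infinite, which is (i).

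The step I expect to be the main obstacle --- more a matter of care than of genuine difficulty --- is the spectral-topology point used in (i)$\Longrightarrow$(ii): that the spectrum of a meromorphic operator, once it is infinite, must accumulate precisely at $0$, so that it contains an infinite sequence of poles tending to $0$. Everything else reduces to routine manipulation of the direct-sum decomposition and to the fact that poles are isolated spectral points lying on the boundary of the spectrum, all of which is available from the definition of a meromorphic operator and from Theorem~\ref{glavna}.
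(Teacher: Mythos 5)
Your proof is correct and follows essentially the same route as the paper's: both implications reduce to Theorem~\ref{glavna} together with the decomposition $\sigma(T)=\sigma(T_M)\cup\sigma(T_N)$ and the fact that a meromorphic operator's spectrum can only accumulate at $0$. The only cosmetic differences are that you enter Theorem~\ref{glavna} through condition (ii) ($0\notin\inter\,\sigma(T)$, obtained because the poles are isolated, hence boundary, points of $\sigma(T)$), whereas the paper enters through condition (x) ($0\notin\inter\,\sigma_{\bf BB}(T)$, obtained because $T-\mu_n$ is Drazin invertible at each pole $\mu_n$), and you prove that $\sigma(T_N)$ is infinite directly rather than by contradiction.
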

\begin{proof}
That  (i) implies (ii) is a straightforward consequence of the fact that $\sigma(T)=\sigma(T_M)\cup \sigma(T_N)$, where $\sigma(T_N)$ is a coutably infinite set with $\sigma_D(T_N)=\{0\}$.

(ii)$\Longrightarrow$(i): If (ii) holds, then there exists a decomposition $(M,N)\in Red(T)$ such that $T_M$ is  Kato and $T_N\in (\M)$. $T_N$ being meromorphc, $\sigma(T_N)$ is either finite or countably infinite (with $0$ as its only limit point). We prove that $\sigma(T_N)$ is finite leads to a contradiction. The hypotheses imply the existence of a sequence $(\mu_n)$ converging to $0$  such that $\mu_n\in \sigma(T)$ and $T-\mu_n$ is Drazin invertible for all $n\in\NN$. It means $0\notin {\rm int}\, \sigma_{\bf BB}(T)$ and an argument used  to prove that (x) implies (i) in Theorem \ref{glavna} shows that $T_M$ is invertible. The spectrum $\sigma(T_N)$ is finite, $\sigma(T)=\sigma(T_M)\cup \sigma(T_N)$ implies that a infinite number of the points $\mu_n\in \sigma(T_M)$, and hence $0\in \sigma(T_M)$-a contradiction. Hence $\sigma(T_N)$ is infinite.
\end{proof}

\begin{theorem} If  $T\in {\bf gD\M R_i}$ and $f$ is holomorphic in a neighbourhood of $\sigma (T)$ such that $f^{-1}(0)\cap \sigma_{\bf R_i}(T)=\{0\}$, then $f(T)\in {\bf gD\M R_i}$ for all $1\le i\le 9$.
\end{theorem}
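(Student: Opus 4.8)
The plan is to use the reducing pair that comes straight from the definition of ${\bf gD\M R_i}$ together with a factorization of $f$ over $\sigma(T)$. Since $T\in{\bf gD\M R_i}$, fix $(M,N)\in Red(T)$ with $T_M\in{\bf R}_i$ and $T_N\in(\M)$. Because $M$ and $N$ are closed and $T$-invariant and $f$ is holomorphic on a neighbourhood of $\sigma(T)=\sigma(T_M)\cup\sigma(T_N)$, the holomorphic functional calculus leaves $M$ and $N$ invariant, so $(M,N)\in Red(f(T))$ and $f(T)=f(T_M)\oplus f(T_N)$. It therefore suffices to prove $f(T_N)\in(\M)$ and $f(T_M)\in{\bf R}_i$, for then $f(T)\in{\bf gD\M R_i}$ by definition. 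One may assume $f$ does not vanish identically on any connected component of its domain meeting $\sigma(T)$ (a component where $f\equiv0$ contributes the summand $0\in(\M)$, peeled off via the corresponding Riesz idempotent of $T$); then $f$ has only finitely many zeros $\mu_0=0,\mu_1,\dots,\mu_n$ in $\sigma(T)$, of orders $k_0,\dots,k_n\ge1$, and one writes $f=gh$ with $g(z)=\prod_{j=0}^{n}(z-\mu_j)^{k_j}$ and $h$ holomorphic and zero-free near $\sigma(T)$, so that $h(T_M)\in L(M)^{-1}$ and $h(T_N)\in L(N)^{-1}$.

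For the meromorphic summand, the hypothesis $f^{-1}(0)\cap\sigma_{{\bf R}_i}(T)=\{0\}$ forces $f(0)=0$, and since $\sigma_D(T_N)\subseteq\{0\}$ one gets $\sigma_D(f(T_N))=f(\sigma_D(T_N))\subseteq\{f(0)\}=\{0\}$ by the spectral mapping theorem for the Drazin spectrum (or, directly, every non-zero point of $\sigma(f(T_N))=f(\sigma(T_N))$ is the image of finitely many poles of the resolvent of $T_N$ and, by comparing the associated Riesz projections, is itself a pole of the resolvent of $f(T_N)$). Hence $f(T_N)\in(\M)$.

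For the ${\bf R}_i$ summand I would show that every factor of $g(T_M)=\prod_{j=0}^{n}(T_M-\mu_j)^{k_j}$ lies in ${\bf R}_i$. For $j=0$ this is $T_M\in{\bf R}_i$, given. For $j\ge1$ we have $\mu_j\in f^{-1}(0)\cap\sigma(T)$ and $\mu_j\neq0$, so the hypothesis gives $\mu_j\notin\sigma_{{\bf R}_i}(T)$, i.e. $T-\mu_j=(T_M-\mu_j)\oplus(T_N-\mu_j)\in{\bf R}_i$. For $1\le i\le6$ this splits at once to $T_M-\mu_j\in{\bf R}_i$. For $i\in\{7,8,9\}$ the Fredholm part still splits, and one removes the index obstruction by noting that $T_N-\mu_j$ is Drazin invertible ($\mu_j\neq0$, $T_N\in(\M)$) and semi-Fredholm, hence of index $0$, so $\ind(T_M-\mu_j)=\ind(T-\mu_j)$ still carries the right sign and $T_M-\mu_j\in{\bf R}_i$. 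In all cases $(T_M-\mu_j)^{k_j}\in{\bf R}_i$ (the factors with $\mu_j\notin\sigma(T_M)$ being invertible), and since each ${\bf R}_i$ is closed under products (indices being additive for $i\ge4$), $g(T_M)\in{\bf R}_i$; finally $f(T_M)=g(T_M)h(T_M)\in{\bf R}_i$.

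The step I expect to be the real obstacle is this last transfer for the (semi-)Weyl classes $i\in\{7,8,9\}$: membership in $\Phi_+$, $\Phi_-$, $\Phi$ passes to the reducing summands, but the index inequalities do not, so one genuinely needs the auxiliary fact that a Drazin invertible semi-Fredholm operator has index $0$ in order to control $\ind(T_M-\mu_j)$ from $\ind(T-\mu_j)$. Everything else---invariance of $(M,N)$ under $f(T)$, the finiteness and factorization of the zero set of $f$, treatment of the $f\equiv0$ component, and closedness of the classes ${\bf R}_i$ under powers and products---is routine.
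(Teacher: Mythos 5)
Your proof is correct, and its skeleton coincides with the paper's: both fix the pair $(M,N)\in Red(T)$ with $T_M\in{\bf R}_i$, $T_N\in(\M)$, write $f(T)=f(T_M)\oplus f(T_N)$, get $f(T_N)\in(\M)$ from $f(0)=0$ and the fact that $f$ carries poles of the resolvent of $T_N$ to poles of the resolvent of $f(T_N)$, and then show $f(T_M)\in{\bf R}_i$ from the hypothesis on $f^{-1}(0)$. Where you diverge is in the last step. The paper invokes, as a black box, the spectral mapping inclusion $\sigma_{{\bf R}_i}(f(T_M))\subset f(\sigma_{{\bf R}_i}(T_M))$ and then asserts that $f^{-1}(0)\cap\sigma_{{\bf R}_i}(T)=\{0\}$ together with $0\notin\sigma_{{\bf R}_i}(T_M)$ yields $0\notin f(\sigma_{{\bf R}_i}(T_M))$; this tacitly uses that a nonzero $\mu\in f^{-1}(0)$ with $T-\mu\in{\bf R}_i$ forces $T_M-\mu\in{\bf R}_i$, which for $i\in\{7,8,9\}$ requires exactly the observation you isolate, namely that $T_N-\mu$ is Drazin invertible and semi-Fredholm, hence of index $0$, so $\ind(T_M-\mu)=\ind(T-\mu)$ (this is the same mechanism as Lemma \ref{suma}(iii)). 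You instead reprove the needed direction of the spectral mapping result by hand, factoring $f=gh$ with $g$ a polynomial carrying the zeros of $f$ on $\sigma(T)$ and $h$ zero-free, and using closedness of each ${\bf R}_i$ under products together with additivity of the index. Your route is more elementary and self-contained (and makes visible the one genuinely delicate point, the index transfer for the semi-Weyl classes, which the paper leaves implicit), at the cost of the extra bookkeeping with the zero set and the components where $f$ vanishes identically; the paper's route is shorter but leans on the cited spectral mapping facts. Both arguments are sound.
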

\begin{proof} It is known that $f(\sigma_{\bf R_i}(T))=\sigma_{\bf R_i}(f(T))$ for all $f$ holomorphic on a neighbourhood of $\sigma (T)$ and $1\le i \le 6$. The corresponding inclusion for $7\le i\le 9$ is $\sigma_{\bf R_i}(f(T))\subset f(\sigma_{\bf R_i}(T))$. If $T\in {\bf gD\M R_i}$, then there exists a decomposition $(M,N)\in Red(T)$ such that $T_M\in {\bf R_i}$  and $T_N\in (\M)$. Furthermore $f(T)=f(T_M) \oplus f(T_N)$. Since $f(0)=0$, and since $f$ maps the poles of the resolvent of $T_N$ onto the poles of the resolvent of $f(T_N)$ (see the proof of the first part of \cite[Theorem 4.1]{BDHZ}), $f(T_N)\in (\M)$. Observe that $0\notin \sigma_{\bf R_i}(T_M)$ and since $f^{-1}(0)\cap \sigma_{\bf R_i}(T)=\{0\}$ we conclude that $0\notin f(\sigma_{\bf R_i}(T_M))$. This, since $f(\sigma_{\bf R_i}(T_M))\supset \sigma_{\bf R_i}(f(T_M))$ for all $1\le  i \le 9$, implies $0\notin \sigma_{\bf R_i}(f(T_M))$. This completes the proof.
\end{proof}

\section{Spectra}

For $T\in L(X)$, set
$$\sigma_{\bf gK \M}(T)=\{\lambda \in \mathbb{C}:T-\lambda   \mbox{ does\ not\ admit\ generalized\ Kato-meromorphic\ decomposition}\}$$
and
$$\sigma_{{\bf g D\M R}_i}(T)=\{\lambda \in \mathbb{C}:T-\lambda \not \in
{\bf g DR R}_i (X)\}, \; \; 1\leq i \leq 9.$$
In the following, we shorten $\sigma_{\bf gD\M L(X)^{-1}}(T)$ to
$$\sigma_{\bf g D\M}(T)=\{\lambda \in \mathbb{C}:T-\lambda \ \mbox{ is\ not\ generalized\ Drazin-meromorphic\ invertible}\}.$$
It is clear from Theorem \ref{F2} that
\begin{eqnarray}
 \sigma_{{\bf g D\M R}_i}(T)&=&\sigma_{\bf gK\M}(T) \cup {\rm int}\, \sigma_{{\bf
R}_i}(T)\label{glava}\\&=&\sigma_{\bf gK\M}(T) \cup  \acc \,
\sigma_{{\bf BR}_i}(T), \ 1\leq i \leq 9.\label{glava-}
\end{eqnarray}

\begin{theorem}\label{closed}
Let $T\in L(X)$ and let $T$ admits a $GK\M D(M,N)$. Then there exists $\epsilon>0$ such that $T-\lambda$ is of Kato type for each $\lambda$ such that  $0<|\lambda|<\epsilon$.
\end{theorem}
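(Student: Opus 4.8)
The plan is to use the given decomposition $(M,N)\in Red(T)$ with $T_M$ Kato and $T_N\in(\M)$, and to analyse $T-\lambda I=(T_M-\lambda I_M)\oplus(T_N-\lambda I_N)$ on a small punctured disc around $0$ by handling the two summands separately. For the first summand I would invoke the classical stability of the Kato property: the set of $\lambda\in\CC$ for which $T_M-\lambda I_M$ is Kato is open (see, e.g., \cite{Mu} or \cite{Ai}; one may even take the radius to be $\gamma(T_M)$, which is positive since $R(T_M)$ is closed). Hence there is $\epsilon>0$ with $T_M-\lambda I_M$ Kato for every $\lambda$ satisfying $|\lambda|<\epsilon$.

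For the second summand, $T_N\in(\M)$ means precisely that every nonzero point of $\sigma(T_N)$ is a pole of its resolvent, i.e. $\sigma_D(T_N)\subseteq\{0\}$; hence $T_N-\lambda I_N$ is Drazin invertible for every $\lambda\neq 0$, and consequently there is a pair $(N_1,N_2)\in Red(T_N)$ with $(T_N-\lambda I_N)_{N_1}$ invertible and $(T_N-\lambda I_N)_{N_2}$ nilpotent. In particular $T_N-\lambda I_N$ is of Kato type for every $\lambda\neq 0$, and this imposes no further constraint on $|\lambda|$.

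It then remains to glue the pieces. Fix $\lambda$ with $0<|\lambda|<\epsilon$. The subspaces $N_1,N_2$ are closed and $T$-invariant (being $T_N$-invariant subspaces of the $T$-invariant subspace $N$), so $X=(M\oplus N_1)\oplus N_2$ with $M\oplus N_1$ and $N_2$ closed $T$-invariant subspaces, closedness of $M\oplus N_1$ coming from Lemma \ref{zatvorenost}. On $N_2$ the operator $T-\lambda I$ restricts to the nilpotent $(T_N-\lambda I_N)_{N_2}$, while on $M\oplus N_1$ it restricts to $(T_M-\lambda I_M)\oplus(T_N-\lambda I_N)_{N_1}$, a direct sum of a Kato operator and an invertible one. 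I would finish by checking the elementary fact that such a direct sum is again Kato: its range is the direct sum $R(T_M-\lambda I_M)\oplus R((T_N-\lambda I_N)_{N_1})$ of two closed subspaces, hence closed, and its kernel is the direct sum of $N(T_M-\lambda I_M)\subseteq R((T_M-\lambda I_M)^n)$ and $\{0\}$, so it lies in $R((T_M-\lambda I_M)^n)\oplus R(((T_N-\lambda I_N)_{N_1})^n)=R(((T-\lambda I)_{M\oplus N_1})^n)$ for every $n\in\NN$. Thus $(M\oplus N_1,N_2)$ is a Kato decomposition of $T-\lambda I$, as required.

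The only genuinely delicate point is this last gluing step: one must be sure that the Kato part of $T_M-\lambda I_M$ and the invertible part produced by the pole decomposition of $T_N-\lambda I_N$ can be merged into a single Kato summand sitting on a closed, complemented, $T$-invariant subspace. Everything else is either the standard Kato perturbation theorem or an immediate consequence of the definition of a meromorphic operator; in particular the argument is completely insensitive to whether $\sigma(T_N)$ is finite or countably infinite.
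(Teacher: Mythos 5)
Your proof is correct and follows essentially the same route as the paper's: the same stability radius $\epsilon=\gamma(T_M)$ for the Kato summand (positive since $T_M$ Kato forces $R(T_M)$ closed and $T_M\neq 0$ when $M\neq\{0\}$), and Drazin invertibility of $T_N-\lambda I_N$ for all $\lambda\neq 0$ on the meromorphic summand. The only difference is at the final gluing step, where the paper simply cites Mbekhta--M\"uller for the fact that a direct sum of Kato-type operators is again of Kato type, while you prove it by hand by regrouping $X=(M\oplus N_1)\oplus N_2$ via the Drazin decomposition of $T_N-\lambda I_N$ and verifying that a direct sum of a Kato operator and an invertible operator is Kato (with closedness of $M\oplus N_1$ and of the range supplied by Lemma~\ref{zatvorenost}); this is carried out correctly and makes the argument self-contained.
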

\begin{proof}
If $M=\{0\}$, then $T$ is meromorphic  and hence $T-\lambda$ is Drazin invertible  for all $\lambda\ne 0$. Therefore,  $T-\lambda$ is of  Kato type for all $\lambda\ne 0$.

Suppose that $M\ne \{0\}$. From \cite[Theorem 1.31]{Ai} it follows that for $|\lambda|<\gamma(T_M)$, $T_M-\lambda$ is Kato. As $T_N$ is meromorphic,  $T_N-\lambda$ is Drazin invertible, and hence it is of  Kato type for all $\lambda\ne 0$. Let $\epsilon=\gamma(T_M)$. According to \cite[p. 143]{MbekhtaMuller} it follows that $T-\lambda$ is of Kato type for each $\lambda$ such that  $0<|\lambda|<\epsilon$.
\end{proof}

\begin{corollary} \label{cor1}
Let $T\in L(X)$. Then

\snoi {\rm (i)} $\sigma_{\bf g K\M}(T)$  is compact;

\snoi {\rm (ii)} The set   $\sigma_{Kt}(T)\setminus\sigma_{\bf gK\M}(T)$ consists of  at most countably many points.

\end{corollary}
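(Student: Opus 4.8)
The plan is to prove both parts by exploiting Theorem~\ref{closed} together with the basic relation between $\sigma_{\bf gK\M}$, $\sigma_{Kt}$ and the structure of meromorphic spectra.

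\smallskip
\textbf{Part (i).} First I would show $\sigma_{\bf gK\M}(T)$ is bounded: since $T-\lambda$ is invertible (hence trivially admits a $GK(\M)D$ with $N=\{0\}$) for $|\lambda|>\|T\|$, we get $\sigma_{\bf gK\M}(T)\subset\{\lambda:|\lambda|\le\|T\|\}$. The real content is closedness, equivalently that the complement is open. So let $\lambda_0\notin\sigma_{\bf gK\M}(T)$, i.e. $T-\lambda_0$ admits a $GK(\M)D(M,N)$. By Theorem~\ref{closed} there is $\epsilon>0$ so that $T-\lambda_0-\mu=(T-\lambda_0)-\mu$ is of Kato type for $0<|\mu|<\epsilon$. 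An operator of Kato type admits a Kato decomposition, and every Kato decomposition is in particular a generalized Kato-meromorphic decomposition (nilpotent $\Rightarrow$ meromorphic). Hence $T-\lambda$ admits a $GK(\M)D$ for every $\lambda\in D(\lambda_0,\epsilon)$ (including $\lambda=\lambda_0$ itself), so $D(\lambda_0,\epsilon)\cap\sigma_{\bf gK\M}(T)=\emptyset$. This shows the complement is open, so $\sigma_{\bf gK\M}(T)$ is closed, and being bounded it is compact.

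\smallskip
\textbf{Part (ii).} I would argue that $\sigma_{Kt}(T)\setminus\sigma_{\bf gK\M}(T)$ has no accumulation point in $\CC$ except possibly at finitely many places, and in fact is countable, by a local covering argument. First note $\sigma_{Kt}(T)\subset\sigma_{\bf gK\M}(T)\cup(\sigma_{Kt}(T)\setminus\sigma_{\bf gK\M}(T))$; the point is to control the second set. Take any $\lambda_0\notin\sigma_{\bf gK\M}(T)$. Then $T-\lambda_0$ admits a $GK(\M)D(M,N)$, and by Theorem~\ref{closed} there is $\epsilon_{\lambda_0}>0$ with $T-\lambda$ of Kato type for $0<|\lambda-\lambda_0|<\epsilon_{\lambda_0}$; that is, the punctured disc $D(\lambda_0,\epsilon_{\lambda_0})\setminus\{\lambda_0\}$ is disjoint from $\sigma_{Kt}(T)$. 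Consequently $D(\lambda_0,\epsilon_{\lambda_0})$ meets $\sigma_{Kt}(T)\setminus\sigma_{\bf gK\M}(T)$ in at most the single point $\lambda_0$. Thus every point of the complement of $\sigma_{\bf gK\M}(T)$ has a neighbourhood containing at most one point of $\sigma_{Kt}(T)\setminus\sigma_{\bf gK\M}(T)$; since $\CC\setminus\sigma_{\bf gK\M}(T)$ is an open subset of $\CC$, it is Lindel\"of (second countable), so it is covered by countably many such discs, and therefore $\sigma_{Kt}(T)\setminus\sigma_{\bf gK\M}(T)$ is at most countable.

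\smallskip
\textbf{Main obstacle.} The only genuinely delicate point is making sure the inclusion ``Kato type $\Rightarrow$ $GK(\M)D$'' and the punctured-neighbourhood conclusion of Theorem~\ref{closed} are applied with the correct uniformity, i.e. that the $\epsilon$ in Theorem~\ref{closed} is attached to the point $\lambda_0$ where a $GK(\M)D$ exists and not to some shifted operator; once that bookkeeping is in place, both parts are soft topology (boundedness plus the Lindel\"of property of open subsets of $\CC$). No serious computation is needed.
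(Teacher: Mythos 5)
Your proof is correct and follows essentially the same route as the paper: both parts rest on Theorem~\ref{closed}, with boundedness coming from $\sigma_{\bf gK\M}(T)\subset\sigma(T)$ and closedness from the fact that Kato type implies a $GK(\M)D$. For (ii) the paper simply notes that $\sigma_{Kt}(T)\setminus\sigma_{\bf gK\M}(T)\subset\iso\,\sigma_{Kt}(T)$ and invokes countability of such a set; your Lindel\"of covering argument is just an unpacking of that same standard fact.
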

\begin{proof} (i): From Theorem \ref{closed} it follows that $\sigma_{\bf g K\M}(T)$  is closed and since $\sigma_{\bf g K\M}(T)\subset \sigma(T)$, $\sigma_{\bf g K\M}(T)$ is bounded. Thus $\sigma_{\bf g K\M}(T)$  is compact.

(ii): Suppose that $\lambda_0\in \sigma_{Kt}(T)\setminus\sigma_{\bf gK\M}(T)$. Then $T-\lambda_0$ admits a $GK\M D$ and according to Theorem \ref{closed} there exists $\epsilon>0$ such that $T-\lambda$ is of Kato type for each $\lambda$ such that  $0<|\lambda-\lambda_0|<\epsilon$. This implies that $\lambda_0\in\iso\, \sigma_{Kt}(T)$. Therefore,  $\sigma_{Kt}(T)\setminus\sigma_{\bf gK\M}(T)\subset \iso\, \sigma_{Kt}(T)$,   which implies that $\sigma_{Kt}\setminus\sigma_{\bf gK\M}(T)$ is  at most countable.
\end{proof}
\begin{corollary} \label{cor1*}
Let $T\in L(X)$ and $1\le i\le 9$. Then

\snoi {\rm (i)} $\sigma_{{\bf g D\M R}_i}(T)\subset \sigma_{{\bf  R}_i}(T)$;

\snoi {\rm (ii)} $\sigma_{{\bf g D\M R}_i}(T)$ is compact;

\snoi {\rm (iii)} $\inter \, \sigma_{{\bf g D\M R}_i}(T)=\inter\,  \sigma_{{\bf  R}_i}(T)$;

\snoi {\rm (iv)} $\partial \sigma_{{\bf g D\M R}_i}(T)\subset\partial \sigma_{{\bf  R}_i}(T)$;

\snoi {\rm (v)}  $\sigma_{{\bf  BR}_i}(T)\setminus\sigma_{{\bf g D\M R}_i}(T)=(\iso\, \sigma_{{\bf B R}_i}(T))\setminus\sigma_{\bf gK\M}(T)$;

\snoi {\rm (vi)} The set $\sigma_{{\bf  BR}_i}(T)\setminus\sigma_{{\bf g D\M R}_i}(T)$ consist of  at most countably many points.

\end{corollary}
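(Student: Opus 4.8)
The plan is to read the entire corollary off the two identities \eqref{glava} and \eqref{glava-} for $\sigma_{{\bf g D\M R}_i}(T)$ together with Corollary \ref{cor1}; no new operator theory is needed, only elementary point-set topology in $\CC$. Throughout I will use the standard facts that each $\sigma_{{\bf R}_i}(T)$ is closed and that $\sigma_{{\bf R}_i}(T),\sigma_{{\bf BR}_i}(T)\subseteq\sigma(T)$.

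For (i) I would observe that ${\bf R}_i(X)\subseteq{\bf GD(\M)R}_i(X)$: if $T-\lambda\in{\bf R}_i$, the trivial pair $(X,\{0\})\in Red(T-\lambda)$ witnesses $T-\lambda\in{\bf GD(\M)R}_i$, since the zero operator on $\{0\}$ has empty spectrum and so is (vacuously) meromorphic; hence $\sigma_{{\bf g D\M R}_i}(T)\subseteq\sigma_{{\bf R}_i}(T)$. For (ii) I would use \eqref{glava-}: $\sigma_{{\bf g D\M R}_i}(T)=\sigma_{\bf gK\M}(T)\cup\acc\,\sigma_{{\bf BR}_i}(T)$. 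The first set is compact by Corollary \ref{cor1}(i); the derived set $\acc\,\sigma_{{\bf BR}_i}(T)$ is automatically closed, and it is bounded because it lies in $\overline{\sigma_{{\bf BR}_i}(T)}\subseteq\sigma(T)$, hence it is compact. A union of two compact sets is compact. It is essential here to use \eqref{glava-} rather than \eqref{glava}, since $\inter\,\sigma_{{\bf R}_i}(T)$ is merely open.

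Parts (iii) and (iv) are then purely formal. For (iii), the inclusion ``$\supseteq$'' holds because by \eqref{glava} the open set $\inter\,\sigma_{{\bf R}_i}(T)$ is contained in $\sigma_{{\bf g D\M R}_i}(T)$, hence in its interior; ``$\subseteq$'' follows by taking interiors in (i). For (iv), since both $\sigma_{{\bf g D\M R}_i}(T)$ (by (ii)) and $\sigma_{{\bf R}_i}(T)$ are closed, each boundary equals the set minus its interior; thus if $\lambda\in\partial\sigma_{{\bf g D\M R}_i}(T)$ then $\lambda\in\sigma_{{\bf R}_i}(T)$ by (i) and $\lambda\notin\inter\,\sigma_{{\bf R}_i}(T)=\inter\,\sigma_{{\bf g D\M R}_i}(T)$ by (iii), so $\lambda\in\partial\sigma_{{\bf R}_i}(T)$.

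Finally, for (v) I would compute directly from \eqref{glava-}, using $\iso\,K=K\setminus\acc\,K$:
\[
\sigma_{{\bf BR}_i}(T)\setminus\sigma_{{\bf g D\M R}_i}(T)=\sigma_{{\bf BR}_i}(T)\setminus\bigl(\sigma_{\bf gK\M}(T)\cup\acc\,\sigma_{{\bf BR}_i}(T)\bigr)=(\iso\,\sigma_{{\bf BR}_i}(T))\setminus\sigma_{\bf gK\M}(T).
\]
Part (vi) is then immediate: the set in question is contained in $\iso\,\sigma_{{\bf BR}_i}(T)$, and the isolated points of any subset of $\CC$ form an at most countable set (each isolated point admits a distinct member of a countable base of $\CC$). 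I do not anticipate any genuine obstacle here; the only steps requiring a little care are the choice of \eqref{glava-} over \eqref{glava} in the compactness argument of (ii), and invoking this last countability fact in (vi).
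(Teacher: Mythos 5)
Your proposal is correct and follows essentially the same route as the paper: (i) from the trivial reducing pair, (ii) from \eqref{glava-} as a union of the compact set $\sigma_{\bf gK\M}(T)$ and the closed bounded derived set $\acc\,\sigma_{{\bf BR}_i}(T)$, (iii)--(iv) by the same two-inclusion interior argument, and (v)--(vi) by the same set-theoretic computation from \eqref{glava-}. The only difference is that you spell out the details the paper leaves as ``obvious'' or ``follows from,'' including the correct observation that the countability in (vi) rests on the fact that the isolated points of any subset of $\CC$ form an at most countable set.
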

\begin{proof} (i) Obvious.

(ii): From \eqref{glava-} and Corollary \ref{cor1} (i) it follows that  $\sigma_{{\bf g D\M R}_i}(T)$ is closed as the union of two closed sets, while from (i) it follows that   $\sigma_{{\bf g D\M R}_i}(T)$ is bounded, and so it is compact.

(iii):
From
\eqref{glava} we have  that $\inter\,  \sigma_{{\bf  R}_i}(T)\subset \sigma_{{\bf g D\M R}_i}(T)$, and hence $\inter\,  \sigma_{{\bf  R}_i}(T)\subset \inter\, \sigma_{{\bf g D\M R}_i}(T)$, while from the inclusion (i) it follows that $\inter \, \sigma_{{\bf g D\M R}_i}(T)\subset\inter\,  \sigma_{{\bf  R}_i}(T)$. Thus $\inter \, \sigma_{{\bf g D\M R}_i}(T)=\inter\,  \sigma_{{\bf  R}_i}(T)$.

(iv): Let $\lambda\in \partial \sigma_{{\bf g D\M R}_i}(T)$. Since $\partial \sigma_{{\bf g D\M R}_i}(T)\subset \sigma_{{\bf g D\M R}_i}(T)\subset \sigma_{{\bf  R}_i}(T) $,   from $\lambda\notin \inter \, \sigma_{{\bf g D\M R}_i}(T)=\inter\,  \sigma_{{\bf  R}_i}(T)$ we conclude $\lambda\in \partial \sigma_{{\bf  R}_i}(T)$. So, $\partial \sigma_{{\bf g D\M R}_i}(T)\subset\partial \sigma_{{\bf  R}_i}(T)$.

(v): It follows from  \eqref{glava-}.

(vi) It follows from (v).
\end{proof}

\medskip

From \eqref{glava} it follows that if $\sigma_{{\bf  R}_i}(T)$ is countable or contained in a line, then $\sigma_{{\bf g D\M R}_i}(T)=\sigma_{\bf g K\M}(T)$, $1\le i\le 9$.

\medskip

\begin{corollary}\label{presek} Let $T\in L(X)$ and $1\le i\le 9$. Then
\begin{equation}\label{presek1}
  \partial \, \sigma_{{\bf  R}_i}(T)\cap \acc\, \sigma_{{\bf  BR}_i}(T)\subset \partial\, \sigma_{\bf g K\M}(T).
\end{equation}
\end{corollary}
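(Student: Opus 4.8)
The plan is to prove the inclusion \eqref{presek1} by a boundary argument using the characterization \eqref{glava-}, namely $\sigma_{{\bf g D\M R}_i}(T)=\sigma_{\bf gK\M}(T)\cup\acc\,\sigma_{{\bf BR}_i}(T)$, together with Corollary \ref{cor1*}(iv). First I would fix $\lambda\in\partial\,\sigma_{{\bf R}_i}(T)\cap\acc\,\sigma_{{\bf BR}_i}(T)$ and aim to show two things: that $\lambda\in\sigma_{\bf gK\M}(T)$, and that $\lambda$ is not an interior point of $\sigma_{\bf gK\M}(T)$; together these give $\lambda\in\partial\,\sigma_{\bf gK\M}(T)$.

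For the first point, since $\lambda\in\acc\,\sigma_{{\bf BR}_i}(T)\subset\sigma_{{\bf g D\M R}_i}(T)$ by \eqref{glava-}, I would argue that if $\lambda\notin\sigma_{\bf gK\M}(T)$ then, because $\lambda\in\partial\,\sigma_{{\bf R}_i}(T)$ means $\lambda\notin\inter\,\sigma_{{\bf R}_i}(T)$, Theorem \ref{F2} (the equivalence of (ii) and (iii)) would force $\lambda\notin\acc\,\sigma_{{\bf BR}_i}(T)$, a contradiction. Hence $\lambda\in\sigma_{\bf gK\M}(T)$. For the second point, suppose for contradiction that $\lambda\in\inter\,\sigma_{\bf gK\M}(T)$; then there is a disc $D(\lambda,\epsilon)\subset\sigma_{\bf gK\M}(T)$. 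Since $\sigma_{\bf gK\M}(T)\subset\sigma_{Kt}(T)\cup\bigl(\text{at most countably many points}\bigr)$ is not directly available, I would instead use Theorem \ref{closed}: for $\mu\notin\sigma_{\bf gK\M}(T)$ near any point... no — rather, I would exploit that $\lambda\in\partial\,\sigma_{{\bf R}_i}(T)$ gives points $\mu$ arbitrarily close to $\lambda$ with $\mu\notin\sigma_{{\bf R}_i}(T)\supset\sigma_{{\bf g D\M R}_i}(T)\supset\sigma_{\bf gK\M}(T)$ by Corollary \ref{cor1*}(i); such $\mu$ lie outside $\sigma_{\bf gK\M}(T)$, contradicting $D(\lambda,\epsilon)\subset\sigma_{\bf gK\M}(T)$. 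Therefore $\lambda\notin\inter\,\sigma_{\bf gK\M}(T)$, and combined with $\lambda\in\sigma_{\bf gK\M}(T)$ we get $\lambda\in\partial\,\sigma_{\bf gK\M}(T)$.

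The main obstacle I anticipate is the first point: making rigorous that a boundary point of $\sigma_{{\bf R}_i}(T)$ which accumulates $\sigma_{{\bf BR}_i}(T)$ cannot simultaneously admit a $GK(\M)D$. The clean way is to invert the implication (iii)$\Rightarrow$(ii) in Theorem \ref{F2} applied to $T-\lambda$: if $T-\lambda$ admits a $GK(\M)D$ and $0\notin\acc\,\sigma_{{\bf BR}_i}(T-\lambda)$, then $0\notin\inter\,\sigma_{{\bf R}_i}(T-\lambda)$ — but this is the contrapositive direction I need, so I must be careful to use instead that $\lambda\in\acc\,\sigma_{{\bf BR}_i}(T)$ together with $\lambda$ admitting a $GK(\M)D$ would place $\lambda$ in $\sigma_{{\bf g D\M R}_i}(T)$ only via the $\acc$ term, which is consistent; the genuine contradiction comes from observing that admitting a $GK(\M)D$ plus $\lambda\notin\inter\,\sigma_{{\bf R}_i}(T)$ yields, by Theorem \ref{F2}(ii)$\Rightarrow$(iii), that $\lambda\notin\acc\,\sigma_{{\bf BR}_i}(T)$. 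Once this equivalence is invoked correctly the rest is routine point-set topology, and I would keep the write-up short accordingly.
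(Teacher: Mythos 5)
Your proof is correct and takes essentially the same route as the paper: the membership $\lambda\in\sigma_{\bf gK\M}(T)$ is obtained exactly as in the paper's inclusion \eqref{presek2}, via the equivalence (ii)$\Leftrightarrow$(iii) of Theorem \ref{F2} applied to $T-\lambda$. The only cosmetic difference is in showing $\lambda\notin\inter\,\sigma_{\bf gK\M}(T)$: the paper picks a sequence $\lambda_n\to\lambda$ with $T-\lambda_n\in{\bf R}_i$ and cites M\"uller's Theorem 16.21 to conclude $\lambda_n\notin\sigma_{\bf gK\M}(T)$, whereas you invoke the inclusion $\sigma_{\bf gK\M}(T)\subset\sigma_{{\bf g D\M R}_i}(T)\subset\sigma_{{\bf R}_i}(T)$, which encodes the same fact.
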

\begin{proof}
 Let $T-\lambda I$ admit a $GK(\M)D$ and let $\lambda\in \partial \, \sigma_{{\bf  R}_i}(T)$. Then $\lambda\notin \inter\, \sigma_{{\bf  R}_i}(T)$ and according to  the equivalence (ii)$ \Longleftrightarrow$(iii) in Theorem \ref{F2} it follows  that $\lambda\notin \acc\, \sigma_{{\bf  BR}_i}(T)$. Therefore,
\begin{equation}\label{presek2}
  \partial \, \sigma_{{\bf  R}_i}(T)\cap \acc\, \sigma_{{\bf  BR}_i}(T)\subset \sigma_{\bf g K\M}(T).
\end{equation}
Suppose that $\lambda\in \partial \, \sigma_{{\bf  R}_i}(T)\cap \acc\, \sigma_{{\bf  BR}_i}(T)$. Then  there exists a sequence $ (\lambda_n)$ which converges to $\lambda $ and  such that $T- \lambda_n\in {\bf R_i}$   for every $n\in\NN$. According to \cite[Theorem 16.21]{Mu} it follows that $T- \lambda_n$ admits a $GK(\M)D$, and so $\lambda_n\notin \sigma_{\bf g K\M}(T)$ for every $n\in\NN$. Since $\lambda\in \sigma_{\bf g K\M}(T)$ by \eqref{presek2}, we conclude that $\lambda\in \partial\, \sigma_{\bf g K\M}(T)$. This proves  the  inclusion \eqref{presek1}.
\end{proof}
Similarly to the inclusion \eqref{presek2}, the following inclusion can be proved
\begin{equation}\label{presek20}
  \partial \, \sigma_{{\bf B R}_i}(T)\cap \acc\, \sigma_{{\bf  BR}_i}(T)\subset \sigma_{\bf g K\M}(T),\ 1\le i\le 9.
\end{equation}

\begin{corollary}\label{SVEP}  Let $T\in L(X)$.

\snoi {\rm (i)} If $T$ has the SVEP, then all accumulation points of $\sigma_{\bf B\B_+}(T)$ belong to $\sigma_{\bf gK\M}(T)$.

\snoi {\rm (ii)} If $T^\prime$ has the SVEP, then all accumulation points of $\sigma_{\bf B\B_-}(T)$ belong to $\sigma_{\bf gK\M}(T)$.

\snoi {\rm (iii)} If  $T$ and $T^\prime$ have  the SVEP, then all accumulation points of $\sigma_{\bf B\B}(T)$ belong to $\sigma_{\bf gK\M}(T)$.
\end{corollary}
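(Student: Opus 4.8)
The plan is to derive all three parts from the SVEP characterizations already established, combined with the known implication that the SVEP propagates to cluster points once a suitable decomposition is available. For part (i), suppose $T$ has the SVEP and let $\lambda_0 \in \acc\, \sigma_{\bf B\B_+}(T)$. The key observation is the contrapositive route: if $\lambda_0 \notin \sigma_{\bf gK\M}(T)$, then $\lambda_0 - T$ admits a $GK(\M)D$. Since $T$ has the SVEP at $\lambda_0$ in particular, the equivalence (iii)$\Longleftrightarrow$(iv) in Theorem \ref{glavna1} applies: $T$ admits a $GK(\M)D$ at $\lambda_0$ together with SVEP at $\lambda_0$ forces $\lambda_0 \notin \acc\, \sigma_{\bf BB_+}(T)$, which is exactly $\lambda_0 \notin \acc\, \sigma_{\bf B\B_+}(T)$. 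This contradicts the choice of $\lambda_0$. Hence every accumulation point of $\sigma_{\bf B\B_+}(T)$ lies in $\sigma_{\bf gK\M}(T)$.

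Part (ii) is the dual statement and follows the same pattern using Theorem \ref{glavna2}: assuming $T'$ has the SVEP, if $\lambda_0 \in \acc\, \sigma_{\bf B\B_-}(T)$ were not in $\sigma_{\bf gK\M}(T)$, then $\lambda_0 - T$ admits a $GK(\M)D$, and the equivalence (iii)$\Longleftrightarrow$(iv) in Theorem \ref{glavna2} (SVEP of $T'$ at $\lambda_0$ plus $GK(\M)D$ implies $\lambda_0 \notin \acc\, \sigma_{\bf BB_-}(T)$) yields the contradiction. For part (iii), assume both $T$ and $T'$ have the SVEP and let $\lambda_0 \in \acc\, \sigma_{\bf B\B}(T)$. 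Here I would use Theorem \ref{glavna}, whose equivalence (iii)$\Longleftrightarrow$(ix) states that $\lambda_0 - T$ admits a $GK(\M)D$ together with SVEP of both $T$ and $T'$ at $\lambda_0$ is equivalent to $\lambda_0 \notin \acc\, \sigma_{\bf BB}(T)$; so if $\lambda_0 \notin \sigma_{\bf gK\M}(T)$ we again reach a contradiction.

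The main (and only real) obstacle is bookkeeping: one must be careful that the SVEP hypotheses are being applied at the right point and that ``$T - \lambda_0$ admits a $GK(\M)D$'' is the correct translation of $\lambda_0 \notin \sigma_{\bf gK\M}(T)$, which is immediate from the definition of $\sigma_{\bf gK\M}$. No new analytic input is needed beyond Theorems \ref{glavna}, \ref{glavna1} and \ref{glavna2}; the corollary is a direct packaging of those equivalences via the contrapositive. One minor point worth stating explicitly in the write-up: the global SVEP of $T$ (resp. $T'$) is only used through its consequence, SVEP at the single point $\lambda_0$, so the hypotheses are in fact slightly stronger than needed, but that is how the corollary is phrased.
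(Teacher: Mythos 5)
Your proof is correct and follows exactly the paper's route: the paper also derives (i), (ii), (iii) directly from the equivalences (iii)$\Longleftrightarrow$(iv) of Theorems \ref{glavna1} and \ref{glavna2} and (iii)$\Longleftrightarrow$(ix) of Theorem \ref{glavna}, applied to $T-\lambda_0$; you merely spell out the contrapositive that the paper leaves implicit.
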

\begin{proof} (i): It follows from the equivalence (iii)$\Longleftrightarrow$(iv) of Theorem
\ref{glavna1}.

(ii): It follows from the equivalence (iii)$\Longleftrightarrow$(iv) of Theorem
\ref{glavna2}.

(iii): It follows from the equivalence (iii)$\Longleftrightarrow$(ix) of Theorem
\ref{glavna}.
\end{proof}

The next corollary extends \cite[Corollary 3.118]{Ai}.
\begin{corollary} \label{example1}
Let $T$ be unilateral weighted right shift operator on $\ell_p(\NN)$, $1\le p<\infty$, with weight $(\omega_n)$, and let $c(T)=\lim\limits_{n\to\infty}\inf (\omega_1\cdot\dots\cdot \omega_n)^{1/n}=0$. Then $\sigma_{\bf gK\M}(T)=\sigma_{\bf gD\M {\bf R}_i}(T)= \sigma(T)=\overline{D(0,r(T))}$, $1\le i\le 9$.
\end{corollary}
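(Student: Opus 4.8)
The plan is to compute the relevant spectra of the weighted right shift directly and then invoke the machinery already developed. Recall the classical facts about the unilateral weighted right shift $T$ on $\ell_p(\NN)$: it is injective, so $\alpha(T)=0$; its spectrum is the closed disc $\overline{D(0,r(T))}$ where $r(T)=\lim_n(\omega_1\cdots\omega_n)^{1/n}$; and more precisely, when $c(T)=\lim\inf_n(\omega_1\cdots\omega_n)^{1/n}=0$, one has $\sigma_{ap}(T)=\sigma(T)=\overline{D(0,r(T))}$ and $T-\lambda$ is injective with non-closed or non-complemented range behaviour forcing $\sigma_{\bf R_i}(T)=\overline{D(0,r(T))}$ for every $1\le i\le 9$; in particular the smallest of these spectra, the approximate point spectrum (tied to $\bf R_1$, $\bf R_4$, $\bf R_7$) and the surjective/Fredholm spectra alike, all equal the full disc. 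This last point is the crux: because $c(T)=0$, for every $\lambda$ with $|\lambda|\le r(T)$ the operator $T-\lambda$ fails to be bounded below (indeed fails to be upper semi-Fredholm, since $0\in\sigma_{ap}(T)$ cannot be isolated and $\alpha=0$ rules out the Fredholm alternative), so $\overline{D(0,r(T))}\subseteq\sigma_{\bf R_i}(T)\subseteq\sigma(T)=\overline{D(0,r(T))}$.

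Next I would feed this into the identity \eqref{glava}, namely $\sigma_{\bf gD\M R_i}(T)=\sigma_{\bf gK\M}(T)\cup\inter\,\sigma_{\bf R_i}(T)$. Since $\sigma_{\bf R_i}(T)=\overline{D(0,r(T))}$ has interior $D(0,r(T))$, we immediately get $\sigma_{\bf gD\M R_i}(T)\supseteq D(0,r(T))$, and since $\sigma_{\bf gD\M R_i}(T)\subseteq\sigma_{\bf R_i}(T)=\overline{D(0,r(T))}$ by Corollary \ref{cor1*}(i), the only thing left to decide is the boundary circle $|\lambda|=r(T)$. For a boundary point $\lambda$, having $\lambda\in\sigma_{\bf gK\M}(T)$ would finish it; equivalently, by \eqref{glava}, it suffices to show no boundary point is removed. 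Here I would use the remark just after Corollary \ref{cor1*}: if $\sigma_{\bf R_i}(T)$ is contained in a line, then $\sigma_{\bf gD\M R_i}(T)=\sigma_{\bf gK\M}(T)$, but that does not apply to a disc; instead I would argue directly that $T-\lambda$ admits no $GK(\M)D$ for $\lambda$ on the boundary. By Theorem \ref{closed}, if $T-\lambda$ admitted a $GK(\M)D$ then $T-\mu$ would be of Kato type for all $\mu$ near $\lambda$ with $\mu\ne\lambda$; but points $\mu$ just inside the disc are not of Kato type (they are not semi-Fredholm and, having $\alpha=0$ with $0$ non-isolated in $\sigma_{ap}$, the Kato-type structure theorem would force them to be bounded below — contradiction). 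Hence the whole disc lies in $\sigma_{\bf gK\M}(T)$, so $\sigma_{\bf gK\M}(T)=\overline{D(0,r(T))}=\sigma(T)$, and then \eqref{glava} gives $\sigma_{\bf gD\M R_i}(T)=\overline{D(0,r(T))}$ as well.

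To organize the write-up: first state and cite the classical description of $\sigma(T)$, $\sigma_{ap}(T)$ and the Kato-type spectrum of a weighted shift with $c(T)=0$ (this is in Aiena's book, around Corollary 3.118, which the statement says it extends); conclude $\sigma_{Kt}(T)=\overline{D(0,r(T))}$. Then note $\sigma_{\bf gK\M}(T)\subseteq\sigma(T)=\overline{D(0,r(T))}$, and for the reverse inclusion use Theorem \ref{closed} together with the fact that $\sigma_{Kt}(T)$ has no isolated points here: if $\lambda\notin\sigma_{\bf gK\M}(T)$ then a punctured neighbourhood of $\lambda$ avoids $\sigma_{Kt}(T)$, forcing $\lambda\in\iso\,\sigma_{Kt}(T)=\emptyset$. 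Finally apply \eqref{glava} with $\sigma_{\bf R_i}(T)=\overline{D(0,r(T))}$ to get $\sigma_{\bf gD\M R_i}(T)=\sigma_{\bf gK\M}(T)\cup D(0,r(T))=\overline{D(0,r(T))}$ for every $1\le i\le 9$.

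The main obstacle I anticipate is pinning down the equality $\sigma_{\bf R_i}(T)=\overline{D(0,r(T))}$ for \emph{all} nine choices of $\bf R_i$ simultaneously — in particular that $T-\lambda$ fails to be Fredholm/Weyl (not merely fails to be bounded below) on the whole closed disc when $c(T)=0$ — and, relatedly, verifying that $\sigma_{Kt}(T)$ has empty interior-complement on the disc, i.e. that every boundary point of the disc is a limit of non-Kato-type points. Both reduce to the structural dichotomy for Kato-type (and semi-Fredholm) perturbations of a weighted shift, which is standard but needs to be quoted carefully; once that is in hand the rest is a mechanical application of \eqref{glava} and Theorem \ref{closed}.
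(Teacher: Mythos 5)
Your argument is correct, but it reaches the decisive inclusion $\overline{D(0,r(T))}\subset\sigma_{\bf gK\M}(T)$ by a genuinely different route. The paper extracts from \cite[Corollary 3.118]{Ai} only that $\sigma(T)=\overline{D(0,r(T))}$ and that $T$ and $T^\prime$ have the SVEP, and then applies the equivalence (ii)$\Longleftrightarrow$(iii) of Theorem \ref{glavna}: at any $\lambda\in\inter\,\sigma(T)$ the operator $T-\lambda$ cannot admit a $GK(\M)D$, so $D(0,r(T))\subset\sigma_{\bf gK\M}(T)$, and closedness of $\sigma_{\bf gK\M}(T)$ closes the sandwich $\overline{D(0,r(T))}\subset\sigma_{\bf gK\M}(T)\subset\sigma_{\bf gD\M R_i}(T)\subset\sigma(T)$. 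You instead first establish $\sigma_{Kt}(T)=\sigma_{ap}(T)=\overline{D(0,r(T))}$ (injectivity of $T-\lambda$ together with $\lambda\in\sigma_{ap}(T)$ rules out the Kato-type and all semi-Fredholm conditions simultaneously), note that this set has no isolated points, and then invoke Theorem \ref{closed} -- in effect the argument of Corollary \ref{cor1}(ii), $\sigma_{Kt}(T)\setminus\sigma_{\bf gK\M}(T)\subset\iso\,\sigma_{Kt}(T)$ -- to force $\sigma_{\bf gK\M}(T)=\sigma_{Kt}(T)$. Both proofs lift a classical computation for the weighted shift to the new spectrum; the paper's SVEP route is shorter and needs neither $\sigma_{ap}(T)=\sigma(T)$ nor the Kato-type spectrum, while yours avoids local spectral theory and yields $\sigma_{\bf R_i}(T)=\sigma_{Kt}(T)=\overline{D(0,r(T))}$ along the way -- information that is ultimately redundant, since once $\sigma_{\bf gK\M}(T)=\sigma(T)$ every intermediate spectrum must coincide with it. Two small points to tidy in a final write-up: $\sigma_{ap}(T)$ is the largest, not the smallest, of the spectra attached to ${\bf R_1},{\bf R_4},{\bf R_7}$; and the isolated-point step should be phrased for $\lambda\in\sigma_{Kt}(T)$, namely that $\lambda\notin\sigma_{\bf gK\M}(T)$ would give $\lambda\in\iso\,\sigma_{Kt}(T)=\emptyset$, a contradiction.
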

\begin{proof} According to \cite[Corollary 3.118]{Ai} it follows that
 $\sigma(T)=\overline{D(0,r(T))}$  and $T$ and $T^\prime$
have the SVEP. From the equivalence (ii)$\Longleftrightarrow$(iii) in Theorem \ref{glavna} it follows that $D(0,r(T))=\inter\, \sigma(T)\subset \sigma_{\bf gK\M}(T)$. Since $\sigma_{\bf gK\M}(T)$ is closed, we obtain that $\overline{D(0,r(T))}\subset \sigma_{\bf gK\M}(T)\subset\sigma_{\bf gD\M {\bf R}_i}(T)\subset\sigma(T)=\overline{D(0,r(T))}$, and so $\sigma_{\bf gK\M}(T)=\sigma_{\bf gD\M {\bf R}_i}(T)= \sigma(T)=\overline{D(0,r(T))}$.
\end{proof}

The {\it connected hull}  of a compact subset $K$ of the complex
plane $\CC$, denoted by $\eta K$, is the complement of the unbounded
component of $\CC\setminus K$ \cite[Definition
7.10.1]{H}.

We recall  that, for compact subsets $H,K\subset\CC$, the following implication holds (\cite[Theorem
7.10.3]{H}):
\begin{equation}\label{spec.1}
\partial H\subset K\subset
H\Longrightarrow\partial H\subset\partial K\subset K\subset
H\subset\eta K=\eta H\ .
\end{equation}
Evidently, if $K\subseteq\CC$ is at most countable, then $\eta K=K$.
Therefore, for compact subsets $H,K\subseteq\CC$, if
$\eta K=\eta H$, then $H\ {\rm is\
at\ most\ countable}$ if and only if $ K\ {\rm is\ at\ most\ countable}$,
and in that case $H=K$.

\medskip


\begin{theorem}\label{potrebna} Let $T\in L(X)$. Then

\snoi
{\rm (i)}
{\footnotesize
$$\begin{array}{ccccccccccc}
&&&&  \partial\,  \sigma_{\bf gD\M \J}(T) & \subset &    \partial\, \sigma_{\bf gD\M \W_+}(T) &  \subset  & \partial\, \sigma_{\bf gD\M \Phi_+}(T)& & \\
&&& \rotatebox{20}{$\subset$} & & \rotatebox{20}{$\subset$}&
&\!\rotatebox{20}{$\subset$}& & \rotatebox{-20}{$\subset$}&\\
  & &  \partial\, \sigma_{\bf gD\M}(T) & \subset &
\partial\, \sigma_{\bf gD\M\W}(T) & \subset &  \partial\, \sigma_{\bf gD\M \Phi}(T)&
  \subset  & &&  \partial\,  \sigma_{\bf gK\M}(T),\\
&&& \rotatebox{-20}{$\subset$} & & \rotatebox{-20}{$\subset$}&
&\rotatebox{-20}{$\subset$}& & \rotatebox{20}{$\subset$}&\\
& &&&   \partial\, \sigma_{\bf gD\M \Q}(T)& \subset& \partial\,  \sigma_{\bf gD\M \W_-}(T) & \subset & \partial\, \sigma_{\bf gD\M \Phi_-}(T) & &\\
\end{array}$$
}

\snoi {\rm (ii)}  $\eta\sigma_{\bf gD\M}(T)=\eta\sigma_{\bf gD\M R_i}(T)$, $1\le i\le 9$.

\snoi {\rm (iii)}
The set $\sigma_{\bf gD\M}(T)$ consists of $\sigma_{*}(T)$ and possibly some holes in $\sigma_{*}(T)$ where
 $\sigma_{*}\in\{\sigma_{\bf gK\M},\break\sigma_{\bf gD\M\W},\sigma_{\bf gD\M\Phi},\sigma_{\bf gD\M\W_+}, \sigma_{\bf gD\M\Phi_+}, \sigma_{\bf gD\M\J}, \sigma_{\bf gD\M\W_-},\sigma_{\bf gD\M\Phi_-}, \sigma_{\bf gD\M\Q}\}$.

\snoi {\rm (iv)}
If one of $\sigma_{\bf gK\M}(T)$, $\sigma_{\bf gD\M}(T)$, $\sigma_{\bf gD\M\W}(T)$, $\sigma_{\bf gD\M\Phi}(T)$, $\sigma_{\bf gD\M\W_+}(T)$, $\sigma_{\bf gD\M\Phi_+}(T)$, $\sigma_{\bf gD\M\J}(T)$, $\sigma_{\bf gD\M\W_-}(T)$, $\sigma_{\bf gD\M\Phi_-}(T)$, $\sigma_{\bf gD\M\Q}(T)$ is finite (countable), then all of them are equal and hence finite (countable).
\end{theorem}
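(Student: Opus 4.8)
The plan is to deduce all four parts from a single inclusion, namely that for every $1\le i\le 9$
$$\partial\,\sigma_{{\bf gD\M R}_i}(T)\subseteq\sigma_{\bf gK\M}(T)\subseteq\sigma_{{\bf gD\M R}_i}(T),$$
combined with the transitivity implication \eqref{spec.1} for the connected hull and the elementary facts about at most countable sets recorded just before the statement. The right-hand inclusion is part of \eqref{glava}. For the left-hand one I would argue by contradiction: if $\lambda\in\partial\,\sigma_{{\bf gD\M R}_i}(T)$ and $\lambda\notin\sigma_{\bf gK\M}(T)$, then \eqref{glava} forces $\lambda\in\inter\,\sigma_{{\bf R}_i}(T)$, and Corollary \ref{cor1*}(iii) identifies $\inter\,\sigma_{{\bf R}_i}(T)$ with $\inter\,\sigma_{{\bf gD\M R}_i}(T)$, so $\lambda\in\inter\,\sigma_{{\bf gD\M R}_i}(T)$, contradicting $\lambda\in\partial\,\sigma_{{\bf gD\M R}_i}(T)$. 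As $\sigma_{\bf gK\M}(T)$ and every $\sigma_{{\bf gD\M R}_i}(T)$ are compact (Corollaries \ref{cor1}(i) and \ref{cor1*}(ii)), the implication \eqref{spec.1} applied with $H=\sigma_{{\bf gD\M R}_i}(T)$ and $K=\sigma_{\bf gK\M}(T)$ yields simultaneously
$$\partial\,\sigma_{{\bf gD\M R}_i}(T)\subseteq\partial\,\sigma_{\bf gK\M}(T),\qquad\eta\,\sigma_{{\bf gD\M R}_i}(T)=\eta\,\sigma_{\bf gK\M}(T),\quad 1\le i\le 9.$$
The first statement is the right-most column of the array in (i); the second, read off for ${\bf R}_i=L(X)^{-1}$ (so that $\sigma_{{\bf gD\M R}_i}=\sigma_{\bf gD\M}$ and $\sigma_{{\bf R}_i}=\sigma$), gives $\eta\,\sigma_{\bf gD\M}(T)=\eta\,\sigma_{\bf gK\M}(T)$, whence $\eta\,\sigma_{{\bf gD\M R}_i}(T)=\eta\,\sigma_{\bf gD\M}(T)$ for every $i$; this is (ii).

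To finish (i) I would invoke the classical inclusions among the ordinary Fredholm-type spectra: $\sigma_{\Phi_+}(T)\subseteq\sigma_{\W_+}(T)\subseteq\sigma_\J(T)$ and, dually, $\sigma_{\Phi_-}(T)\subseteq\sigma_{\W_-}(T)\subseteq\sigma_\Q(T)$; $\sigma_{\Phi_\pm}(T)\subseteq\sigma_\Phi(T)\subseteq\sigma_\W(T)$; $\sigma_{\W_\pm}(T)\subseteq\sigma_\W(T)$; and $\sigma_{{\bf R}_i}(T)\subseteq\sigma(T)$ for all $i$ (all of these hold because the underlying operator classes are correspondingly nested and the invertible operators lie in every ${\bf R}_i$). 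Since $\inter$ is monotone, \eqref{glava} converts each inclusion $\sigma_{{\bf R}_j}(T)\subseteq\sigma_{{\bf R}_i}(T)$ into $\sigma_{{\bf gD\M R}_j}(T)\subseteq\sigma_{{\bf gD\M R}_i}(T)$; and whenever $\sigma_{{\bf gD\M R}_j}(T)\subseteq\sigma_{{\bf gD\M R}_i}(T)$, the inclusion $\partial\,\sigma_{{\bf gD\M R}_i}(T)\subseteq\sigma_{\bf gK\M}(T)$ established above gives $\partial\,\sigma_{{\bf gD\M R}_i}(T)\subseteq\sigma_{\bf gK\M}(T)\subseteq\sigma_{{\bf gD\M R}_j}(T)\subseteq\sigma_{{\bf gD\M R}_i}(T)$, so \eqref{spec.1} (now with $H=\sigma_{{\bf gD\M R}_i}(T)$, $K=\sigma_{{\bf gD\M R}_j}(T)$) yields $\partial\,\sigma_{{\bf gD\M R}_i}(T)\subseteq\partial\,\sigma_{{\bf gD\M R}_j}(T)$. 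Reading this off for the pairs indicated by the arrows reproduces the whole diagram.

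For (iii), fix any $\sigma_*$ from the list. By \eqref{glava} and the inclusions of the previous paragraph one has $\sigma_{\bf gK\M}(T)\subseteq\sigma_*(T)\subseteq\sigma_{\bf gD\M}(T)$, hence $\partial\,\sigma_{\bf gD\M}(T)\subseteq\sigma_{\bf gK\M}(T)\subseteq\sigma_*(T)\subseteq\sigma_{\bf gD\M}(T)$, and \eqref{spec.1} gives $\sigma_*(T)\subseteq\sigma_{\bf gD\M}(T)\subseteq\eta\,\sigma_*(T)$. Since $\eta\,\sigma_*(T)\setminus\sigma_*(T)$ is exactly the union of the bounded components (the holes) of $\CC\setminus\sigma_*(T)$, this says precisely that $\sigma_{\bf gD\M}(T)$ is $\sigma_*(T)$ with some of its holes filled in, i.e. (iii). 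Finally (iv) is immediate from (ii): every spectrum listed there is either $\sigma_{\bf gK\M}(T)$ or some $\sigma_{{\bf gD\M R}_i}(T)$, and all of them have the common connected hull $\eta\,\sigma_{\bf gK\M}(T)$; by the remark preceding the statement, if one of them is at most countable then so are all of them and they coincide, and in particular the same is true with ``finite'' in place of ``at most countable''. The only step that is more than bookkeeping is the inclusion $\partial\,\sigma_{{\bf gD\M R}_i}(T)\subseteq\sigma_{\bf gK\M}(T)$ — the fact that the boundary of $\sigma_{{\bf gD\M R}_i}(T)$ cannot meet $\inter\,\sigma_{{\bf R}_i}(T)$ — which I expect to be the crux; it rests on the identification of interiors in Corollary \ref{cor1*}(iii), after which everything follows mechanically from \eqref{glava}, \eqref{spec.1}, the classical spectral inclusions and the countability remark.
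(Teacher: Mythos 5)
Your proposal is correct and follows essentially the same route as the paper: both reduce everything to the key inclusion $\partial\,\sigma_{{\bf gD\M R}_i}(T)\subset\sigma_{\bf gK\M}(T)$, established from \eqref{glava} together with Corollary \ref{cor1*}(iii) (you phrase it as a contradiction, the paper via the identity $\sigma_{{\bf gD\M R}_i}(T)=\sigma_{\bf gK\M}(T)\cup\inter\,\sigma_{{\bf gD\M R}_i}(T)$), and then invoke \eqref{spec.1} on the nested compact spectra and the countability remark. Your write-up merely spells out more of the bookkeeping for (i), (iii) and (iv) that the paper leaves implicit.
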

\begin{proof} Since $\sigma_{\bf gK\M}(T)$ and $\sigma_{\bf gD\M R_i}(T)$, $1\le i\le 9$, are compact,
according to \eqref{spec.1} and the  inclusions

{\footnotesize
$$\begin{array}{ccccccccccc}
&&&&   \sigma_{\bf gD\M \Phi_+}(T) & \subset &    \sigma_{\bf gD\M \W_+}(T) &  \subset  & \sigma_{\bf gD\M \J}(T)& & \\
&&& \rotatebox{20}{$\subset$} & & \rotatebox{-20}{$\subset$}&
&\!\rotatebox{-20}{$\subset$}& & \rotatebox{-20}{$\subset$}&\\
  & &  \sigma_{\bf gK\M}(T) & \subset &
 & \subset &  \sigma_{\bf gD\M \Phi}(T)&
  \subset  & \sigma_{\bf gD\M \W}(T)&\subset&   \sigma_{\bf gD\M }(T).\\
&&& \rotatebox{-20}{$\subset$} & & \rotatebox{20}{$\subset$}&
&\rotatebox{20}{$\subset$}& & \rotatebox{20}{$\subset$}&\\
& &&&   \sigma_{\bf gD\M \Phi_-}(T)& \subset&  \sigma_{\bf gD\M \W_-}(T) & \subset & \sigma_{\bf gD\M \Q}(T) & &\\
\end{array}$$
}
it is enough to prove that
\begin{equation}\label{pot1}
  \partial\, \sigma_{{\bf g D\M R}_i}(T)\subset\sigma_{\bf gK\M}(T),\ 1\le i\le 9.
\end{equation}
Suppose that $\lambda\in \partial\sigma_{{\bf g D\M R}_i}(T)$.  From \eqref{glava} and Corollary \ref{cor1*} (iii) it follows that
\begin{equation}\label{pot2}
   \sigma_{{\bf g D\M R}_i}(T)= \sigma_{\bf gK\M}(T) \cup\inter\,  \sigma_{{\bf g D\M R}_i}(T).
\end{equation}
 Since $\sigma_{{\bf g D\M R}_i}(T)$ is closed, it follows that $\lambda\in \sigma_{{\bf g D\M R}_i}(T)$,  and 
 from \eqref{pot2} we conclude that $\lambda\in\sigma_{\bf gK\M}(T)$.
   \end{proof}

\begin{theorem}\label{Lak}
Let $T\in L(X)$.  The following statements are equivalent:

\snoi {\rm (i)} $\sigma_{\bf gK\M}(T)=\emptyset$;

\snoi {\rm (ii)} $\sigma_{\bf gD\M}(T)=\emptyset$;

\snoi {\rm (iii)} $T$ is polynomially meromorphic;

\snoi  {\rm (iv)} $\sigma_{\bf B\B}(T)$ is a finite set.

\end{theorem}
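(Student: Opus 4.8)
The plan is to establish the cycle (i) $\Rightarrow$ (ii) $\Rightarrow$ (iv) $\Rightarrow$ (iii) $\Rightarrow$ (i), pulling together the structural results of Section 2 together with the spectral-mapping properties of meromorphic operators. For (i)~$\Rightarrow$~(ii): if $\sigma_{\bf gK\M}(T)=\emptyset$, then by the identity \eqref{glava} (with $i=3$, i.e. $\mathbf{R}_3=L(X)^{-1}$) we have $\sigma_{\bf gD\M}(T)=\sigma_{\bf gK\M}(T)\cup{\rm int}\,\sigma(T)={\rm int}\,\sigma(T)$; but every boundary point of $\sigma(T)$ lies in $\sigma_{\bf gK\M}(T)$ by Theorem~\ref{potrebna}(i) (or directly: a boundary point of $\sigma(T)$ cannot be an interior point, so by Theorem~\ref{glavna} (ii)$\Leftrightarrow$(iv) such a point is not in $\sigma_{\bf gD\M}(T)$, hence $\sigma(T)={\rm int}\,\sigma(T)$ forces $\sigma(T)=\emptyset$, impossible unless the ``interior'' part is already empty). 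More cleanly: $\sigma_{\bf gK\M}(T)=\emptyset$ implies $\partial\sigma(T)\subset\sigma_{\bf gK\M}(T)=\emptyset$, so $\sigma(T)$ is open and closed, hence empty or all of $\CC$; since $\sigma(T)$ is a nonempty compact set it must be that ${\rm int}\,\sigma(T)=\emptyset$ as well, giving $\sigma_{\bf gD\M}(T)=\emptyset$. For (ii)~$\Rightarrow$~(iv): from \eqref{glava-} with $i=3$, $\sigma_{\bf gD\M}(T)=\sigma_{\bf gK\M}(T)\cup{\rm acc}\,\sigma_{\bf BB}(T)=\emptyset$ forces both $\sigma_{\bf gK\M}(T)=\emptyset$ and ${\rm acc}\,\sigma_{\bf BB}(T)=\emptyset$; a compact set (here $\sigma_{\bf BB}(T)$, which is compact) with no accumulation points is finite.

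For (iv)~$\Rightarrow$~(iii): assume $\sigma_{\bf BB}(T)=\{\mu_1,\dots,\mu_k\}$ is finite. Put $p(z)=\prod_{j=1}^k(z-\mu_j)$ (a nontrivial polynomial). One shows $p(T)$ is meromorphic, i.e. $\sigma_D(p(T))\subset\{0\}$. By the spectral mapping theorem for the B-Browder spectrum---$\sigma_{\bf BB}(p(T))=p(\sigma_{\bf BB}(T))$, available via the equivalence $\sigma_{\bf BB}=\sigma_{\bf BR_3}=\sigma$ of $D(X)$ from \eqref{poziv} and the known spectral mapping for $\sigma_D$, or directly from the literature on B-Browder spectra---we get $\sigma_{\bf BB}(p(T))=p(\{\mu_1,\dots,\mu_k\})=\{0\}$. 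Since $\sigma_{\bf BB}(S)=\{0\}$ means $S$ is B-Browder at every nonzero point, and $S$ being B-Browder at $\lambda$ with $\lambda\neq 0$ not in the spectrum OR a point where $S-\lambda$ is Drazin invertible (as $\mathbf{BR_3}=D(X)$) shows every nonzero spectral point of $S=p(T)$ is a pole of its resolvent; hence $p(T)$ is meromorphic and $T$ is polynomially meromorphic. Here the main obstacle is justifying the spectral mapping identity $\sigma_{\bf BB}(p(T))=p(\sigma_{\bf BB}(T))$ cleanly; I expect to invoke $\sigma_{\bf BB}=\sigma_D$ on operators (from \eqref{poziv}) reducing it to the classical spectral mapping theorem for the Drazin spectrum $\sigma_D(p(T))=p(\sigma_D(T))$, which is standard.

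For (iii)~$\Rightarrow$~(i): suppose $p(T)$ is meromorphic for some nontrivial polynomial $p$, say with distinct roots $\lambda_1,\dots,\lambda_m$. Fix any $\lambda\in\CC$; we must produce a $GK(\M)D$ for $T-\lambda$. Since $p(z)-p(\lambda)=c\prod_{j}(z-\alpha_j)^{n_j}$ for suitable $\alpha_j$ with $\alpha_1=\lambda$ possibly among them, write $p(T)-p(\lambda)I=c\,(T-\lambda)^{n}\,q(T)$ where $q(\lambda)\neq 0$ and $q(T)$ is invertible near $\lambda$---more carefully, decompose using the Riesz idempotent: since $p(T)$ is meromorphic, $p(T)-p(\lambda)I$ is either invertible (if $p(\lambda)$ is not in the spectrum) or Drazin invertible (if $p(\lambda)\neq 0$ is a pole) or, if $p(\lambda)=0$, then $p(T)$ is meromorphic so $p(T)$ itself is Drazin invertible at worst off $0$... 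In all cases $p(T)-p(\lambda)I$ is Drazin invertible, hence Riesz-decomposable: there is $(M,N)\in Red(p(T))$, equivalently (since everything commutes with $T$) $(M,N)\in Red(T)$, with $(p(T)-p(\lambda))_M$ invertible and $(p(T)-p(\lambda))_N$ nilpotent. On $M$, $(T-\lambda)_M$ has $p(\cdot)-p(\lambda)$ invertible at it, which by the spectral mapping theorem forces $\lambda\notin\sigma((T-\lambda)_M)$ up to a polynomial factor---so $(T_M-\lambda)$ is invertible times something, in particular $T_M-\lambda$ is Drazin invertible, so after a further Riesz splitting $T_M-\lambda=$ (invertible)$\oplus$(nilpotent); on $N$, $(T-\lambda)_N$ has $(p(\cdot)-p(\lambda))_N$ nilpotent, so $\sigma((T-\lambda)_N)$ maps into $\{0\}$ under $p(\cdot)-p(\lambda)$, making $T_N-\lambda$ meromorphic (indeed its spectrum is finite with all nonzero points poles, since $p$ restricted to the spectrum is a polynomial whose only relevant value is $0$). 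Reassembling, $T-\lambda$ is a direct sum of an invertible, a nilpotent (absorb into the meromorphic part), and a meromorphic operator, hence admits a $GK(\M)D$. Since $\lambda$ was arbitrary, $\sigma_{\bf gK\M}(T)=\emptyset$. The delicate point here is the bookkeeping with the polynomial $p(z)-p(\lambda)$ and its factorization; the cleanest route is: $T-\lambda$ is ``polynomially Drazin invertible'' in the sense that $(p(\cdot)-p(\lambda))(T-\lambda+\lambda)=p(T)-p(\lambda)I$ is Drazin invertible, and an elementary argument (splitting off the root $\lambda$ of $p(z)-p(\lambda)$ and using that the complementary factor is invertible on the relevant spectral subspace) shows $T-\lambda$ is itself generalized Drazin-meromorphic invertible, hence by Theorem~\ref{glavna}(iv)$\Rightarrow$(ii) admits a $GK(\M)D$. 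I expect this last implication to be the main obstacle, precisely because of the need to handle the case $p(\lambda)=0$ (when $\lambda$ is a root of $p$) separately and to track which spectral subspace the root $\lambda$ of $p(z)-p(\lambda)$ contributes to.
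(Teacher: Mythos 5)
Your skeleton (a cycle of implications driven by \eqref{glava}, \eqref{glava-} and Theorem \ref{glavna}) is close to the paper's, and your (ii)$\Rightarrow$(iv) step is exactly the paper's argument. But the proposal has a genuine gap at the one place where the real work happens, namely (iii)$\Rightarrow$(i) (equivalently (iii)$\Rightarrow$(ii)) at the roots of $p$. You assert that ``in all cases $p(T)-p(\lambda)I$ is Drazin invertible'' and then split $X$ so that $(p(T)-p(\lambda))_M$ is invertible and $(p(T)-p(\lambda))_N$ is nilpotent. This is false when $p(\lambda)=0$: then $p(T)-p(\lambda)I=p(T)$, which is merely meromorphic, and a meromorphic operator need not be Drazin invertible (take $T$ quasinilpotent but not nilpotent and $p(z)=z$; then $p(T)=T$ has $a(T)=\delta(T)=\infty$ in general). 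So the decomposition you build your argument on does not exist precisely at the points $\lambda\in p^{-1}(0)$, which are the only points where $T-\lambda$ is not already Drazin invertible (by $\sigma_{\bf BB}(T)\subset p^{-1}(0)$, i.e.\ \cite[Corollary 4.3]{BDHZ}) and hence the only points that need an argument. You flag this case as ``the main obstacle'' but do not resolve it. The paper resolves it by importing the structure theorem \cite[Theorem 4.9]{BDHZ}: $X=X_1\oplus\dots\oplus X_n$ with each $X_i$ closed and $T$-invariant and $T_i-\lambda_i\in(\M)$; then $T-\lambda_1=(T_1-\lambda_1)\oplus\bigl((T_2-\lambda_1)\oplus\dots\oplus(T_n-\lambda_1)\bigr)$ is a direct sum of a meromorphic operator and a B-Browder operator (using $\sigma_{\bf BB}(T_i)\subset\{\lambda_i\}$ for $i\ne 1$ and Lemma \ref{suma}), which by Theorem \ref{glavna} gives generalized Drazin-meromorphic invertibility and hence a $GK(\M)D$. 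Some such spectral decomposition of $X$ adapted to the roots of $p$ is unavoidable; the factorization of $p(z)-p(\lambda)$ alone does not produce it.

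Two smaller points. In (i)$\Rightarrow$(ii), your ``more cleanly'' version rests on the inclusion $\partial\sigma(T)\subset\sigma_{\bf gK\M}(T)$, which is false (any meromorphic $T$ with infinite spectrum has $\sigma_{\bf gK\M}(T)=\emptyset$ and $\partial\sigma(T)=\sigma(T)\ne\emptyset$); Theorem \ref{potrebna}(i) gives $\partial\sigma_{\bf gD\M}(T)\subset\partial\sigma_{\bf gK\M}(T)$, not that. The step is easily repaired — from \eqref{glava} with $i=3$, $\sigma_{\bf gD\M}(T)={\rm int}\,\sigma(T)$ would be simultaneously open and compact, hence empty — or one argues as the paper does via the connected-hull identity $\eta\sigma_{\bf gD\M}(T)=\eta\sigma_{\bf gK\M}(T)$ of Theorem \ref{potrebna}(ii), which gives (i)$\Leftrightarrow$(ii) in both directions at once. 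In (iv)$\Rightarrow$(iii) you lean on the spectral mapping theorem $\sigma_{\bf BB}(p(T))=p(\sigma_{\bf BB}(T))$, which is not established in this paper; the authors simply cite \cite{BDHZ} for (iii)$\Leftrightarrow$(iv), so if you want a self-contained proof you must either prove that identity or cite it explicitly.
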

\begin{proof} The equivalence {\rm (i)}$\Longleftrightarrow${\rm (ii)}  follows from Theorem \ref{potrebna}.

The equivalence
{\rm (iii)}$\Longleftrightarrow${\rm (iv)}  has been proved in \cite{BDHZ}.

{\rm (ii)} $\Longrightarrow$ {\rm (iv)}:  Suppose that $\sigma_{\bf gD\M}(T)=\emptyset$. From Theorem \ref{glavna}, (iv) $\Longleftrightarrow$ (ix), it follows that $\sigma_{\bf g D\M}(T)=\sigma_{\bf g K\M}(T)\cup \acc\, \sigma_{\bf B\B}(T)$, and so $\acc\, \sigma_{\bf B\B}(T)=\emptyset$  which implies that $\sigma_{\bf B\B}(T)$ is a finite set.

{\rm (iii)} $\Longrightarrow$ {\rm (ii)}: Let $T$ be polynomially meromorphic and $p^{-1}(0)=\{\lambda_1,\dots,\lambda_n\}$ where $p(T)\in (\M)$. According to \cite[Corollary 4.3]{BDHZ} we have that $\sigma_{\bf B\B}(T)\subset p^{-1}(0)$. It implies that  $T-\lambda$ is Drazin invertible  and hence, generalized Drazin-meromorphic invertible     for every $\lambda\notin p^{-1}(0)$.
\ According to \cite[Theorem 4.9]{BDHZ}, $X$ is  decomposed into the
direct sum $X=X_1\oplus\dots\oplus X_n$ where $X_i$ is closed
$T$-invariant subspace of $X$,  $T=T_1\oplus\dots\oplus T_n$
where $T_i$ is the reduction of $T$ on $X_i$ and $T_i-\lambda_i\in (\M)$,
 $i=1,\dots, n$. From $T_i-\lambda_i\in (\M)$, it follows that $\sigma_{\bf B\B}(T_i-\lambda_i)\subset \{0\}$ and hence,  $\sigma_{\bf B\B}(T_i)\subset \{\lambda_i\}$, $i=1,\dots,n$. It implies that $T_i-\lambda_j$ is B-Browder for $i\ne j$, $i,j\in\{1,\dots,n\}$.

Consider  the  decomposition
$$T-\lambda_1=(T_1-\lambda_1)\oplus(T_2-\lambda_1)\oplus\dots\oplus (T_n-\lambda_1).
$$
From Lemma \ref{zatvorenost} it follows that $ X_2\oplus\dots\oplus X_n$ is closed.
Since $(X_1, X_2\oplus\dots\oplus X_n)\in Red(T)$, $(T-\lambda_1)_{X_1}=T_1-\lambda_1\in (\M)$, and since  $(T-\lambda_1)_{X_2\oplus\dots\oplus X_n}=(T_2-\lambda_1)\oplus\dots\oplus (T_n-\lambda_1)$
is B-Browder as a direct sum of B-Browder operators $T_2-\lambda_1,\dots , T_n-\lambda_1$,
  it follows that $T-\lambda_1$ is generalized Drazin-meromorphic invertible. In that way   we can  prove that $T-\lambda_i$ is generalized Drazin-meromorphic invertible for every $i\in\{1,\dots,n\}$. Therefore, $T-\lambda$ is  generalized Drazin-meromorphic invertible     for every $\lambda\in\CC$, and so $\sigma_{\bf gD\M}(T)=\emptyset$.
\end{proof}

P. Aiena and E. Rosas \cite[Theorem 2.10]{aienarosas} proved that if
  $T\in L(X)$ be an operator for which $\sigma_{ap}(T)=\partial\sigma (T)$ and every $\lambda\in \partial\sigma (T)$ is not isolated in $\sigma (T)$, then
$\sigma_{ap}(T)=\sigma_{Kt}(T)$, while Q.  Jiang  and H. Zhong \cite[Theorem 3.12]{Kinezi} improved this result by proving that under the same conditions it holds $\sigma_{ap}(T)=\sigma_{gK}(T)$.
In \cite[Theorem 3.14]{SZC} it was proved that $\sigma_{ap}(T)=\sigma_{gKR}(T)$. The next theorem improves these results.

\begin{theorem} \label{PO} For $T\in L(X)$ suppose that  $\sigma_{ap}(T)=\partial\sigma (T)$ and every $\lambda\in \partial\sigma (T)$ is not isolated in $\sigma (T)$. Then
$$\sigma_{\bf gK\M}(T)=\sigma_{\bf gD\M\Phi_+}(T)=\sigma_{\bf gD\M\W_+}(T)=\sigma_{\bf gD\M\J}(T)=\sigma_{ap}(T).
$$
\end{theorem}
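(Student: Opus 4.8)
The strategy is to establish the chain of inclusions
\[
\sigma_{ap}(T)=\partial\sigma(T)\subset\sigma_{\bf gK\M}(T)\subset\sigma_{\bf gD\M\Phi_+}(T)\subset\sigma_{\bf gD\M\W_+}(T)\subset\sigma_{\bf gD\M\J}(T)\subset\sigma_{ap}(T),
\]
which forces all five sets to coincide. The last four inclusions are immediate from the general inclusions among the ${\bf gD\M R}_i$-spectra recorded in the diagram of Theorem \ref{potrebna} (or directly from $\sigma_{\bf gK\M}\subset\sigma_{\bf gD\M R_i}$ in \eqref{glava}) together with Corollary \ref{cor1*}(i), which gives $\sigma_{\bf gD\M\J}(T)\subset\sigma_{\J}(T)=\sigma_{ap}(T)$. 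So the whole content is the first inclusion: under the stated hypotheses, $\partial\sigma(T)\subset\sigma_{\bf gK\M}(T)$, equivalently, no point of $\partial\sigma(T)$ admits a generalized Kato-meromorphic decomposition.

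First I would fix $\lambda_0\in\partial\sigma(T)$ and argue by contradiction, assuming $T-\lambda_0$ admits a $GK(\M)D$. The key tool is Theorem \ref{closed}: there is $\epsilon>0$ such that $T-\lambda$ is of Kato type for every $\lambda$ with $0<|\lambda-\lambda_0|<\epsilon$. Since $\lambda_0\in\partial\sigma(T)$, every punctured disc around $\lambda_0$ meets the resolvent set $\CC\setminus\sigma(T)$; but it also meets $\sigma(T)$ because $\lambda_0$ is not isolated in $\sigma(T)$ (indeed $\lambda_0\in\partial\sigma(T)\subset\sigma(T)$ and the hypothesis says $\lambda_0$ is not isolated). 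Now I would split into cases according to whether the "Kato part" $M$ in the decomposition of $T-\lambda_0$ is trivial. If $M=\{0\}$, then $T-\lambda_0$ is meromorphic, so $\lambda_0$ is a pole of the resolvent, hence isolated in $\sigma(T)$ — contradicting the hypothesis. If $M\neq\{0\}$, write $T-\lambda_0=(T-\lambda_0)_M\oplus(T-\lambda_0)_N$ with $(T-\lambda_0)_M$ Kato and $(T-\lambda_0)_N\in(\M)$; the SVEP characterization in Theorem \ref{glavna} (equivalence (ii)$\Leftrightarrow$(iii), or directly Theorem \ref{glavna1}) is the natural lever here.

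The plan is to exploit the hypothesis $\sigma_{ap}(T)=\partial\sigma(T)$ to get SVEP for $T$ at every point of $\CC\setminus\partial\sigma(T)$ via implication \eqref{w2}, and in particular at points near $\lambda_0$ in the interior of $\sigma(T)$ or the resolvent. Actually the cleanest route: since $\lambda_0\in\partial\sigma(T)=\sigma_{ap}(T)$ and $\partial\sigma(T)$ has empty interior (it is a topological boundary), $0$ is not an interior point of $\sigma_{ap}(T-\lambda_0)$; by Theorem \ref{glavna1}, (i)$\Leftrightarrow$(ii), the $GK(\M)D$ at $\lambda_0$ then upgrades to a decomposition where the Kato part is bounded below, i.e. $T-\lambda_0\in{\bf gD\M\J}$, so $(T-\lambda_0)_M$ is bounded below. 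Dually I must rule this out: being bounded below is an open condition persisting on $D(\lambda_0,\epsilon')$ for the $M$-part, while on $N$ the meromorphic part $(T-\lambda_0)_N-\mu$ is invertible for small $\mu\neq0$; hence $T-\lambda$ would be bounded below for all $\lambda$ in a punctured neighborhood of $\lambda_0$, giving $\lambda\notin\sigma_{ap}(T)=\partial\sigma(T)$. Combined with $\lambda_0\notin\mathrm{iso}\,\sigma(T)$ and $\lambda_0\in\partial\sigma(T)$, this produces points of $\sigma(T)$ arbitrarily near $\lambda_0$ at which $T-\lambda$ is bounded below yet non-invertible — forcing such a $\lambda$ to lie in $\sigma_{su}(T)\setminus\sigma_{ap}(T)$ strictly inside $\sigma(T)$, contradicting $\sigma_{ap}(T)=\partial\sigma(T)$ once one checks that $\lambda$ can be chosen in $\mathrm{int}\,\sigma(T)$ (using that $\lambda_0$ is non-isolated in the compact set $\sigma(T)$ whose boundary is $\sigma_{ap}(T)$, a standard point-set argument — e.g. via the analogue of \cite[Theorem 2.10]{aienarosas}).

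The main obstacle is this last point-set step: extracting, from "$\lambda_0$ non-isolated in $\sigma(T)$" and "$T-\lambda$ bounded below near $\lambda_0$", a genuine contradiction with $\sigma_{ap}(T)=\partial\sigma(T)$. The subtlety is that a bounded-below-but-not-invertible operator $T-\lambda$ has $\lambda\in\sigma_{su}(T)\subset\sigma(T)$ but $\lambda\notin\sigma_{ap}(T)=\partial\sigma(T)$, so $\lambda\in\mathrm{int}\,\sigma(T)$; one then needs that such $\lambda$ actually exist arbitrarily close to $\lambda_0$, which follows because the set of $\lambda$ near $\lambda_0$ with $T-\lambda$ bounded below is open and contains a punctured neighborhood minus the resolvent set, and the resolvent set cannot engulf a full punctured neighborhood of the non-isolated point $\lambda_0\in\partial\sigma(T)$. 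Once this is set up carefully the contradiction closes, giving $\partial\sigma(T)\subset\sigma_{\bf gK\M}(T)$ and hence the asserted equality of all five spectra.
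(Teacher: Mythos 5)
Your reduction of the theorem to the single inclusion $\sigma_{ap}(T)=\partial\sigma(T)\subset\sigma_{\bf gK\M}(T)$ is correct, and so is your first move in proving it: since $\sigma(T)$ is closed, $\partial\sigma(T)$ has empty interior, hence $0\notin\inter\,\sigma_{ap}(T-\lambda_0)$, and Theorem \ref{glavna1} upgrades the assumed $GK(\M)D$ of $T-\lambda_0$ to a decomposition $T-\lambda_0=S_M\oplus S_N$ with $S_M$ bounded below and $S_N\in(\M)$. After that, however, the argument has two genuine gaps. First, the claim that the meromorphic part $S_N-\mu$ is invertible for all small $\mu\neq 0$ is false: a meromorphic operator may have infinitely many nonzero poles accumulating at $0$ (e.g.\ a diagonal operator with eigenvalues $1/n$), and at such $\mu$ the operator $S_N-\mu$ is only Drazin invertible, so $T-\lambda$ need not be bounded below on a full punctured neighbourhood of $\lambda_0$, only off a countable set. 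The gap between quasinilpotent and meromorphic tails is precisely the subject of this paper, so it cannot be elided. Second, and more seriously, the contradiction you aim for does not exist: producing $\lambda$ near $\lambda_0$ with $T-\lambda$ bounded below but not invertible only places $\lambda$ in $\sigma_{su}(T)\setminus\sigma_{ap}(T)\subset\inter\,\sigma(T)$, which is perfectly consistent with $\sigma_{ap}(T)=\partial\sigma(T)$; the forward unilateral shift satisfies all hypotheses of the theorem and has $T-\lambda$ bounded below and non-invertible at every interior point of its spectrum.

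To close the argument along your lines you would need two further ingredients that are absent from the proposal: (a) on the connected set $D(\lambda_0,\epsilon)$ minus the countable pole set, $T-\lambda$ is upper semi-Fredholm with $\alpha(T-\lambda)=0$, so $\ind(T-\lambda)=-\beta(T-\lambda)$ is constant there; since $\lambda_0\in\partial\sigma(T)$ this set meets the resolvent set, forcing $\beta=0$ and hence invertibility of $T-\lambda$ off the countable set; (b) consequently $\sigma(T)\cap D(\lambda_0,\epsilon)$ is countable, and a Baire-category (scattered-set) argument then yields an isolated point of $\sigma(T)$ inside $D(\lambda_0,\epsilon)$, which lies in $\partial\sigma(T)$ and contradicts the non-isolation hypothesis. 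The paper sidesteps all of this with a short computation on spectra: it quotes $\sigma_{ap}(T)=\sigma_{TUD}(T)$ from \cite{ZB}, deduces $\sigma_{\bf B\B_+}(T)=\sigma_{ap}(T)=\acc\,\sigma_{ap}(T)=\partial\,\sigma_{ap}(T)$, and then applies the inclusion \eqref{presek2} to obtain $\sigma_{ap}(T)\subset\sigma_{\bf gK\M}(T)$ directly.
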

\begin{proof} From the proof of \cite[Theorem 3.14]{SZC} we have $\sigma_{ap}(T)=\acc\, \sigma_{ap}(T)=\partial\, \sigma_{ap}(T)$.
According to
 the inclusion \eqref{presek2} it holds
\begin{equation}\label{presek20*}
  \partial \, \sigma_{ap}(T)\cap \acc\, \sigma_{\bf  B\B_+}(T)\subset \sigma_{\bf g K\M}(T).
\end{equation}
From \cite[Corollary 4.9 (i)]{ZB} we have that $\sigma_{ap}(T)=\sigma_{TUD}(T)$, and since $\sigma_{TUD}(T)\subset\sigma_{\bf B\B_+}(T)\subset\sigma_{ap}(T)$ \cite{Ber0}, it follows that $\sigma_{\bf B\B_+}(T)=\sigma_{ap}(T)$. Hence $\partial \, \sigma_{ap}(T)\cap \acc\, \sigma_{\bf  B\B_+}(T)= \sigma_{ap}(T)$, which together with \eqref{presek20*} gives $\sigma_{ap}(T)\subset \sigma_{\bf g K\M}(T)$. As $\sigma_{\bf g K\M}(T)\subset\sigma_{ap}(T)$, we get that $\sigma_{ap}(T)=\sigma_{\bf gK\M}(T)
$.
\end{proof}
\begin{theorem}\label{kl} Let $T\in L(X)$ be an operator for which $\sigma_{su}(T)=\partial\sigma (T)$ and every $\lambda\in \partial\sigma (T)$ is not isolated in $\sigma (T)$. Then
$$\sigma_{\bf gK\M}(T)=\sigma_{\bf gD\M\Phi_-}(T)=\sigma_{\bf gD\M\W_-}(T)=\sigma_{\bf gD\M\Q}(T)=\sigma_{su}(T).
$$
\end{theorem}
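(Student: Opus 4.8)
The plan is to mirror the proof of Theorem \ref{PO}, with $\sigma_{ap}$ replaced by $\sigma_{su}$, the index $i=1$ (left/injective data) replaced by $i=2$ (right/surjective data, so ${\bf R}_2=\S(X)$ and ${\bf BR}_2={\bf B\B_-}(X)$), and $\sigma_{\bf B\B_+}$ replaced by $\sigma_{\bf B\B_-}$. First I would extract the purely topological content of the hypotheses. Exactly as in the proof of \cite[Theorem 3.14]{SZC} read for the surjective spectrum (equivalently, applied to $T^\prime$, since $\sigma(T^\prime)=\sigma(T)$, $\partial\sigma(T^\prime)=\partial\sigma(T)$ and $\sigma_{ap}(T^\prime)=\sigma_{su}(T)$), the assumptions $\sigma_{su}(T)=\partial\sigma(T)$ and $\partial\sigma(T)\cap\iso\,\sigma(T)=\emptyset$ give
\[
\sigma_{su}(T)=\acc\,\sigma_{su}(T)=\partial\,\sigma_{su}(T).
\]

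Next I would apply the inclusion \eqref{presek2} with $i=2$, namely $\partial\,\sigma_{su}(T)\cap\acc\,\sigma_{\bf B\B_-}(T)\subseteq\sigma_{\bf gK\M}(T)$. To make the left-hand side all of $\sigma_{su}(T)$, I would invoke the dual of \cite[Corollary 4.9]{ZB} (equivalently $\sigma_{TUD}(T)=\sigma_{TUD}(T^\prime)$ together with $\sigma_{ap}(T^\prime)=\sigma_{TUD}(T^\prime)$, valid because $T^\prime$ satisfies the hypotheses of Theorem \ref{PO}) to get $\sigma_{su}(T)=\sigma_{TUD}(T)$, and combine this with the elementary chain $\sigma_{TUD}(T)\subseteq\sigma_{\bf B\B_-}(T)\subseteq\sigma_{su}(T)$ from \cite{Ber0} (a right Drazin invertible operator has TUD, and a surjective operator is right Drazin invertible). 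This yields $\sigma_{\bf B\B_-}(T)=\sigma_{su}(T)$, hence $\acc\,\sigma_{\bf B\B_-}(T)=\acc\,\sigma_{su}(T)$, and then by the first display $\partial\,\sigma_{su}(T)\cap\acc\,\sigma_{\bf B\B_-}(T)=\sigma_{su}(T)$, so $\sigma_{su}(T)\subseteq\sigma_{\bf gK\M}(T)$.

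Finally I would close the circle of inclusions. From \eqref{glava} we have $\sigma_{\bf gK\M}(T)\subseteq\sigma_{\bf gD\M\Phi_-}(T)$; the operator inclusions $\S(X)\subseteq\W_-(X)\subseteq\Phi_-(X)$ give $\sigma_{\bf gD\M\Phi_-}(T)\subseteq\sigma_{\bf gD\M\W_-}(T)\subseteq\sigma_{\bf gD\M\Q}(T)$; and $\sigma_{\bf gD\M\Q}(T)\subseteq\sigma_{su}(T)$ by Corollary \ref{cor1*}(i) (as in the chain of Theorem \ref{potrebna}). Stringing these together with $\sigma_{su}(T)\subseteq\sigma_{\bf gK\M}(T)$ forces equality throughout, which is the assertion. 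The main obstacle is, as in Theorem \ref{PO}, establishing the equalities $\sigma_{su}(T)=\sigma_{\bf B\B_-}(T)=\sigma_{TUD}(T)$ under the hypotheses, which rests on the dual results of \cite{ZB} and on the topological fact that under the non-isolation hypothesis $\partial\sigma(T)$ is perfect and equal to its own boundary; everything else is bookkeeping with the inclusions already available in Section 3. An alternative, essentially equivalent route is to apply Theorem \ref{PO} to $T^\prime$ and translate back via Lemma \ref{help} (which gives $\sigma_{\bf gK\M}(T)=\sigma_{\bf gK\M}(T^\prime)$) together with the duality identities $\sigma_{su}(T)=\sigma_{ap}(T^\prime)$, $\sigma_{\bf gD\M\Phi_-}(T)=\sigma_{\bf gD\M\Phi_+}(T^\prime)$, $\sigma_{\bf gD\M\W_-}(T)=\sigma_{\bf gD\M\W_+}(T^\prime)$ and $\sigma_{\bf gD\M\Q}(T)=\sigma_{\bf gD\M\J}(T^\prime)$.
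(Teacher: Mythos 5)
Your proposal is correct and follows essentially the same route as the paper, whose proof of this theorem is simply ``Follows from \cite[Corollary 4.9 (ii)]{ZB} analogously to the proof of Theorem \ref{PO}'' --- i.e., exactly the dualized argument you spell out: \eqref{presek2} with $i=2$, the identity $\sigma_{su}(T)=\sigma_{TUD}(T)=\sigma_{\bf B\B_-}(T)$ from the dual part of \cite{ZB}, and the chain of inclusions closing the circle. Your expanded justification of the steps (and the alternative duality route via $T^\prime$) is consistent with what the paper intends.
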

\begin{proof}
Follows from \cite[Corollary 4.9 (ii)]{ZB}
analogously to the proof of Theorem \ref{PO}.
\end{proof}
Using Theorems \ref{PO} and \ref{kl} we find the generalized Kato-meromorphic spectra and $gD\M  R_i$-spectra, $1\le i\le 9$, for some operators.
\begin{example}{\em For the $Ces\acute{a}ro\ operator$ $C_p$ defined on the classical Hardy space $H_p(\DDD)$, $\DDD$ the open unit disc and $1<p<\infty$, by
$$
 (C_pf)(\lambda)=\ds\frac 1{\lambda}\int_0^{\lambda}\ds\frac{f(\mu)}{1-\mu}\, d\mu,\ \, {\rm for\ all\ }f\in H_p(\DDD)\ {\rm and\ }\lambda\in\DDD,
 $$
it is known that its spectrum is  the closed disc $\Gamma_p$ centered at $p/2$ with radius $p/2$, $\sigma_{gKR}(C_p)=\sigma_{gK}(C_p)=\sigma_{Kt}(C_p)=\sigma_{ap}(C_p)=\partial \Gamma_p$ and also $\sigma_{\Phi}(C_p)=\partial \Gamma_p$ \cite{Mill}, \cite{aienarosas}, \cite{SZC}. From Theorem \ref{PO} it follows that
 $\sigma_{\bf gK\M}(C_p)=\sigma_{\bf gD\M\Phi_+}(C_p)=\sigma_{\bf gD\M\W_+}(C_p)=\sigma_{\bf gD\M\J}(C_p)=\sigma_{ap}(C_p)=\partial \Gamma_p$, and
since 
 $\inter\, \sigma_{\Phi}(C_p)=\inter\, \sigma_{\Phi_-}(C_p)=\emptyset$, according to \eqref{glava} we have that
 $
 \sigma_{\bf gD\M\Phi}(C_p)=\sigma_{\bf gD\M\Phi_-}(C_p)=\sigma_{\bf gK\M}(C_p)=\partial \Gamma_p.
 $
  From
 $\sigma(C_p)= \Gamma_p$ and  $\sigma_{ap}(C_p)=\partial \Gamma_p$ it follows that  and $\sigma_{su}(C_p)= \Gamma_p$ which together with $\sigma_{\Phi}(C_p)=\partial \Gamma_p$ implies that $\sigma_{\W_-}(C_p)=\sigma_{\W}(C_p)=\Gamma_p$.
     Again from \eqref{glava} we conclude that $\sigma_{\bf gD\M\W_-}(C_p)=\sigma_{\bf gD\M\W}(C_p)=\sigma_{\bf gD\M\Q}(C_p)=\sigma_{\bf gD\M}(C_p)=\Gamma_p$.
}
\end{example}

\begin{example} \rm For  each  $X\in \{  c_0(\NN), c(\NN), \ell_{\infty}(\NN),
\ell_p(\NN)\}$, $p\ge 1$, and the forward and backward unilateral shifts
$U$, $V\in L(X)$ there are equalities $\sigma(U)=\sigma(V)=\DD$, $\sigma_D(U)=\sigma_D(V)=\DD$,  $
\sigma_{ap}(U)=\sigma_{su}(V)=\partial\DD$,  $\sigma_{\Phi}(U)=\sigma_{\Phi}(V)=\partial\DD$, where $\DD=\{\lambda\in\CC:|\lambda|\le 1\}$ \cite[Theorem 4.2]{ZDH}.
As in the previous example, from Theorem \ref{PO} we conclude that $\sigma_{\bf gK\M}(U)=\sigma_{ap}(U)=\partial\DD$, and hence
$$
 \sigma_{\bf gD\M\Phi}(U)=\sigma_{\bf gD\M\Phi_-}(U)=\sigma_{\bf gD\M\Phi_+}(U)=\sigma_{\bf gD\M\W_+}(U)=\sigma_{\bf gD\M\J}(U)=\partial \DD.
 $$
while from \eqref{glava} we get
$\sigma_{\bf gD\M\W_-}(U)=\sigma_{\bf gD\M\W}(U)=\sigma_{\bf gD\M\Q}(U)=\sigma_{\bf gD\M}(U)=\DD$.

From Theorem  \ref{kl} it follows that
$$
\sigma_{\bf gK\M}(V)=\sigma_{\bf gD\M\Phi_-}(V)=\sigma_{\bf gD\M\W_-}(V)=\sigma_{\bf gD\M\Q}(V)=\sigma_{su}(V)=\partial\DD.
$$
   As $\sigma_{\Phi}(V)=\partial\DD$, from \eqref{glava} we get that  $\sigma_{\bf gD\M\Phi}(V)=\sigma_{\bf gD\M\Phi_+}(V)=\partial\DD$.
   From $\sigma_{\Phi}(V)=\partial\DD$, $\sigma_{ap}(V)=\DD$ and $\sigma_{su}(V)=\partial\DD$, we conclude that for   $|\lambda|<1$ it holds that $V-\lambda I$ is Fredholm with positive  index and so, $\{\lambda\in\CC:|\lambda|<1\}\subset\sigma_{\W_+}(V)\subset\sigma_{\W}(V)\subset\DD$, which implies that
   $\sigma_{\W_+}(V)=\sigma_{\W}(V)=\DD$. Now  again from \eqref{glava} it follows that $\sigma_{\bf gD\M\W_+}(V)=\sigma_{\bf gD\M\W}(V)=\DD$, and hence $\sigma_{\bf gD\M\J}(V)=\DD$ and  $\sigma_{\bf gD\M}(V)=\DD$.

 Every  non-invertible isometry $T$ has the property that $\sigma(T)=\DD$ and $\sigma_{ap}(T)= \partial\DD$ \cite[p. 187]{aienarosas},  and hence   $\sigma_{ap}(T)=\partial\sigma (T)$ and every $\lambda\in \partial\sigma (T)$ is not isolated in $\sigma (T)$. By Theorem \ref{PO} and \eqref{glava} it follows that $\sigma_{\bf gK\M}(T)=\sigma_{\bf gD\M\Phi_+}(T)=\sigma_{\bf gD\M\W_+}(T)=\sigma_{\bf gD\M\J}(T)=\partial\DD$ and $\sigma_{\bf gD\M\Q}(T)=\DD$.
\end{example}

\noindent
\author{Sne\v zana
\v C. \v Zivkovi\'c-Zlatanovi\'c}

\noindent{University of Ni\v s\\
Faculty of Sciences and Mathematics\\
P.O. Box 224, 18000 Ni\v s, Serbia}

\noindent {\it E-mail}: {\tt mladvlad@mts.rs}

\bigskip

\noindent
\author{Bhagwati P. Duggal}

\noindent{University of Ni\v s\\
Faculty of Sciences and Mathematics\\
P.O. Box 224, 18000 Ni\v s, Serbia}

\noindent {\it E-mail}: {\tt bpduggal@yahoo.co.uk}

\end{document}